\numberwithin{equation}{section}
\theoremstyle{plain}
\newtheorem{theorem}{Theorem}[section]
\newtheorem{lemma}[theorem]{Lemma}
\newtheorem{proposition}[theorem]{Proposition}
\newtheorem{corollary}[theorem]{Corollary}
\theoremstyle{definition}
\newtheorem{example}[theorem]{Example}
\newtheorem{remark}[theorem]{Remark}
\newtheorem{definition}[theorem]{Definition}
\def\deg{\textup{deg}}
\def\Z{\mathbb{Z}}
\def\L{\mathcal{L}}
\def\H{\mathbb{H}}
\def\F{\mathbb{F}}
\def\s{\mathfrak{s}}
\def\P{\widehat{\Phi}}
\def\A{\mathfrak{A}}
\def\bH{\mathcal{H}}
\def\C{\mathcal{C}}
\def\OS{Ozsv\'ath }
\title{L--space surgeries on 2-component L--space links}
\author{Beibei Liu}
\address{B.L.: Department of Mathematics, UC Davis, One Shields Avenue, Davis CA 95616, USA}
\email{bxliu@math.ucdavis.edu}
\begin{document}

\begin{abstract}
In this paper, we analyze L-space surgeries on two component L--space links. We show that if one surgery coefficient is negative for the L--space surgery, then the corresponding link component is an unknot. If the link  admits  very negative (i.e. $d_{1}, d_{2}\ll0$) L--space surgeries, it is the Hopf link. We also give a way to characterize  the torus link $T(2, 2l)$ by observing an L--space surgery $S^{3}_{d_{1}, d_{2}}(\L)$ with $d_{1}d_{2}<0$ on a 2-component L--space link with unknotted components. For some 2-component L--space links, we give  explicit descriptions of the  L--space surgery sets. 
\end{abstract}

\maketitle
\section{introduction}

Heegaard Floer homology is an invariant for closed, oriented 3-manifolds, defined using Heegaard diagram by \OS and Szab\'o \cite{OS2}. From the viewpoint of this invariant, \emph{L-spaces} are the simplest three-manifolds. An \emph{L-space} is a rational homology sphere such that the free rank of its Heegaard Floer homology equals the order of its first singular homology group. Boyer, Gordon and Watson recently conjectured that for closed, oriented and prime three-manifolds, left-orderability of the fundamental  groups indicates that the manifold is not an L--space \cite{BGW, HRRW, HRW, Ras17}, and this was confirmed for graph manifolds. \OS and Szab\'o proved that if the three-manifold $M$ admits an cooriented taut foliation, it is not an L--space \cite{OS3}.

A link in $S^{3}$ is an \emph{L--space link} if all sufficiently large surgeries on all components of the link are L--spaces. This indicates that $S^{3}_{d_{1}, \cdots, d_{n}}(\L)$ are L--spaces for the L--space link $\L=L_{1}\cup \cdots \cup L_{n}$ with $d_{i}\gg 0$ for all $i$. It is very hard to determine whether $S^{3}_{d_{1}, \cdots, d_{n}}(\L)$
 is an L--space for other integral surgery coefficients $d_{1}, \cdots d_{n}$. For    L--space knots $K$  in $S^{3}$, 
the integral surgery $S^{3}_{d}(K)$ is an L--space if and only if $d\geq 2g(K)-1$ \cite[Proposition 9.6]{OS11}. For multiple-component links, Manolescu and \OS constructed the truncated surgery complex
\cite{MO}. It starts with an infinitely generated complex. There are six ways to truncate this complex to a finitely generated but rather complicated complex depending on the signs of the surgery coefficients and the determinant of the surgery matrix. Y. Liu described the truncated surgery complex very explicitly for 2-component L--space links in \cite{Liu}. It is simpler compared to the truncated surgery complex for general 2-component links,  and it is possible to determine if a single surgery on the link is an L--space. However, the characterization of integral or rational L--space surgeries on 2-component L--space links is still not well-understood. 

Gorsky and N\'emethi proved that the set of L--space surgeries for most algebraic links is bounded from below and determined this set for integral surgeries along torus links \cite{GN}. Rasmussen has shown that certain torus links, satellites by algebraic links, and iterated satellites by torus links have fractal-like regions of rational L--space surgery slopes \cite{Ras}.

In this paper, we analyze the integral surgeries  $S^{3}_{d_{1}, d_{2}}(\L)$ for any 2-component L--space link $\L=L_{1}\cup L_{2}$. Note that whether a link $\L$ is an L--space link does not depend on the orientation of $\L$. However, Manolescu-\OS surgery complex depends on the orientation of $\L$. In this paper, we orient all 2-component L--space links such that they have nonnegative linking numbers. For such links, we have  the $H$-function $H_{\L}(\bm{s})$ which  is a link invariant   defined on some 2-dimensional lattice $\H(\L)$ and takes values in  nonnegative integers, see Section \ref{sec:the H-function}. If there exists a lattice point $\bm{s}=(s_{1}, s_{2})\in \H(\L)$  such that $H_{\L}(\bm{s})>H_{\L}(s_{1}, s_{2}+1), H_{\L}(\bm{s})>H_{\L}(s_{1}+1, s_{2})$ and one of  $ H_{\L}(s_{1}, \infty), H_{\L}(\infty, s_{2})$ equals $0$, we say the link is a type (A) link. Otherwise, we say the link is type (B). For example, the Whitehead link is type (A), and all algebraic links are type (B) \cite{GN}.

\begin{theorem}
\label{ppositive}

Suppose that $\L=L_{1}\cup L_{2}$ is a type (A) L--space link .  If $S^{3}_{d_{1}, d_{2}}(\L)$ is an L--space, then $d_{1}>0, d_{2}>0$ and $\det \Lambda>0$. 

\end{theorem}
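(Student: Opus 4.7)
The plan is to use the explicit form of the Manolescu--Ozsv\'ath surgery complex for 2-component L--space links developed by Y.~Liu \cite{Liu}, together with the rigid constraints on the $H$-function of an L--space link. The L--space condition forces $\mathrm{rank}\,\widehat{HF}(S^3_{d_1,d_2}(\L))=|\det \Lambda|$, giving a sharp bound on the homology of the truncated surgery complex. The type (A) hypothesis will produce a surviving cycle violating this bound whenever any of $d_1\leq 0$, $d_2\leq 0$, or $\det\Lambda\leq 0$ holds.

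First I would recall that for a 2-component L--space link the $H$-function satisfies $H_\L(\bm{s})-H_\L(\bm{s}+e_i)\in\{0,1\}$, so $H_\L$ is weakly decreasing in each coordinate and the one-sided limits $H_\L(s_1,\infty)$ and $H_\L(\infty,s_2)$ exist and recover (up to a standard shift) the $H$-functions of the individual components. The type (A) hypothesis gives a lattice point $\bm{s}=(s_1,s_2)$ at which both discrete partial differences equal $1$ and for which one of the marginal limits vanishes; without loss of generality assume $H_\L(s_1,\infty)=0$. In Y.~Liu's explicit description, a strict local maximum of $H_\L$ at $\bm{s}$ yields a distinguished generator there whose two outgoing edge maps to the shifted towers at $\bm{s}+e_1$ and $\bm{s}+e_2$ carry its top-degree part nontrivially, while the incoming differentials from $\bm{s}-e_1$ and $\bm{s}-e_2$ land in strictly lower grading.

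Next I would analyze the three non-positivity cases in turn. When $d_1\leq 0$ the relevant truncation of the Manolescu--Ozsv\'ath complex retains an infinite ray of lattice points with first coordinate equal to $s_1$. The vanishing $H_\L(s_1,\infty)=0$ then forces the surgery complex restricted to this ray to be asymptotically acyclic in the relevant grading, so the distinguished cycle at $\bm{s}$ has no boundary killing it. A symmetric argument handles $d_2\leq 0$ using $H_\L(\infty,s_2)=0$, with the local maximum $\bm{s}$ chosen to match whichever marginal vanishes. When $d_1,d_2>0$ but $\det\Lambda\leq 0$, the diagonal truncation cuts in a direction transverse to both coordinate axes; the same local-max cycle survives because it is localized at a single lattice point not removed by the diagonal cutoff, and the two adjacent generators at $\bm{s}+e_i$ that could absorb it have strictly smaller $H$-value. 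In every case the rank of $\widehat{HF}$ strictly exceeds $|\det\Lambda|$, contradicting the L--space hypothesis.

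The main obstacle will be the careful bookkeeping of truncations and grading shifts needed to rule out delicate cancellations along the boundary of the truncation region, and to locate the surviving cycle in an explicit spin$^c$ structure. The type (A) assumption that one marginal $H$-function vanishes at the critical slice is exactly the condition needed to ensure that the cycle at $\bm{s}$ is not cancelled by contributions coming in from infinity along the unbounded direction of the truncation.
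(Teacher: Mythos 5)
Your overall strategy --- locate a class in the truncated Manolescu--Ozsv\'ath complex that is forced to survive by the type (A) point and contradict the L--space condition --- is the same as the paper's, but as written the argument has genuine gaps that prevent it from closing.

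First, the mechanism is described backwards. The edge maps out of $\A^{00}_{\bm{s}}$ are $\Phi^{L_1}_{\bm{s}}\colon\A^{00}_{\bm{s}}\to\A^{10}_{\bm{s}}$, $\Phi^{-L_1}_{\bm{s}}\colon\A^{00}_{\bm{s}}\to\A^{10}_{\bm{s}+\Lambda_1}$, and similarly for $L_2$ --- not maps to towers at $\bm{s}+e_1$ and $\bm{s}+e_2$ --- and the whole point of the type (A) hypothesis is that all four of these maps are multiplication by \emph{positive} powers of $U$, hence vanish after setting $U=0$; they do not ``carry the top-degree part nontrivially.'' That vanishing is what makes $\widehat{\A}^{00}_{\bm{s}}$ a cycle (and, since nothing maps into the $00$-level, a nonzero class in $\widehat{HF}$). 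Verifying it is the entire content of the proof: $\widehat{\Phi}^{L_1}_{\bm{s}}=\widehat{\Phi}^{L_2}_{\bm{s}}=0$ follow from the strict local maximum, but $\widehat{\Phi}^{-L_1}_{\bm{s}}$ and $\widehat{\Phi}^{-L_2}_{\bm{s}}$ require the conjugation symmetry of Lemma \ref{symmetry3}, e.g. $H(-s_1,-s_2)-H(-s_1,\infty)=H(s_1,s_2)-H(s_1+l,\infty)+s_2-l/2$, together with $H(s_1+l,\infty)\le H(s_1,\infty)=0$ and $s_1\ge g_1+l/2$. Your proposal never performs these computations, and they are exactly where the hypothesis that one marginal vanishes enters.

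Second, your case analysis wants to match the vanishing marginal to whichever datum is non-positive (``a symmetric argument handles $d_2\le 0$ using $H(\infty,s_2)=0$''), but type (A) supplies only \emph{one} vanishing marginal at the given point; if $H(s_1,\infty)=0$ and $d_2\le 0$ you cannot switch to the other marginal, so that branch of your argument has nothing to run on. The paper avoids any case division on the signs of $d_1,d_2,\det\Lambda$: once the single point $\bm{s}$ is shown to kill all four hat-differentials, Corollary \ref{generation2} says the unique generator of $\widehat{HF}$ of an L--space surgery can be a chain in $\widehat{C}^{00}$ only when $d_1>0$, $d_2>0$ and $\det\Lambda>0$, which finishes the proof at once. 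There are also no ``infinite rays'' in any truncation (all six truncation regions are finite), and no need to compare total rank with $|\det\Lambda|$; the argument is carried out in a single Spin$^c$ structure.
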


For type (A) L--space links, the region for  L--space surgeries is bounded from below. For type (B) L--space links, the region for  L--space surgeries is more complicated, and it may be unbounded. For example,  the torus link $T(4, 6)$ has unbounded L--space surgery set (see  \cite[Figure 1]{GN}). We analyze the very positive or very negative surgeries $S^{3}_{d_{1}, d_{2}}(\L)$ where $d_{i}\gg 0$ or $d_{i}\ll 0$ for type (B) links. 

\begin{theorem}
\label{thm:unknot}
Let $\L=L_{1}\cup L_{2}$ be a type (B) L--space link. If $S^{3}_{d_{1}, d_{2}}(\L)$ is an L--space with $d_{1}\gg 0, d_{2}\ll 0$, then $L_{2}$ is an unknot.  

\end{theorem}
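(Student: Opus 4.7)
The plan is to convert the L-space hypothesis into $H$-function inequalities via Y.~Liu's description of the truncated surgery complex for 2-component L-space links, use the large-$d_1$ regime to isolate the knot $L_2$, and then invoke the type (B) hypothesis to rule out any nontrivial contribution from $L_2$.

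First I would observe that for $d_1 \gg 0$, $Y := S^{3}_{d_1}(L_1)$ is an L--space, and the full surgery $S^{3}_{d_1,d_2}(\L)$ can be identified with a surgery on the image of $L_2$ in $Y$ whose slope tends to $d_2$ as $d_1\to\infty$ (the correction coming from the linking number is $O(1/d_1)$). In Y.~Liu's surgery complex this corresponds to truncating in the $s_1$-direction at a large positive value and reading off the resulting subcomplex as a surgery complex for the knot $L_2 \subset Y$. The relevant chain-level data is encoded by $H_{\L}(s_1,s_2)$ for $s_1 \gg 0$, which stabilizes to $H_{L_2}(s_2) = H_{\L}(\infty,s_2)$.

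Next, I would suppose for contradiction that $L_2$ is nontrivial. Then $H_{L_2}$ is not the unknot's $H$-function, so there exists $s_2^\ast \in \Z$ with $H_{L_2}(s_2^\ast) > H_{L_2}(s_2^\ast+1)$ and $H_{L_2}(s_2^\ast) > 0$, and by stabilization this strict drop is realized at some lattice point $(s_1, s_2^\ast) \in \H(\L)$ with $s_1 \gg 0$. Because $L_1$ is a single knot component, $H_{L_1}(s_1) = 0$ for $s_1 \gg 0$, so $H_{\L}(s_1, \infty) = H_{L_1}(s_1) = 0$. The key remaining ingredient is that the L-space condition with $d_2 \ll 0$ also forces $H_{\L}(s_1, s_2^\ast) > H_{\L}(s_1+1, s_2^\ast)$: otherwise the truncated complex acquires a surplus rank and $S^{3}_{d_1,d_2}(\L)$ fails to be an L--space. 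These three pieces of information exhibit $(s_1, s_2^\ast)$ as a ``type (A) witness'', contradicting the hypothesis. Hence $H_{L_2}$ is the unknot's $H$-function and $L_2$ is unknotted.

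The main obstacle will be establishing the last step: showing that the L-space hypothesis with $d_1 \gg 0$ and $d_2 \ll 0$ really does force a simultaneous drop of $H_{\L}$ in both directions at the lattice point produced by the nontriviality of $L_2$. This requires a careful local analysis of the truncated Manolescu--Ozsv\'ath complex in the mixed-sign regime, in particular ruling out the possibility that a negative $d_2$-differential cancels a potentially problematic generator without actually forcing the $s_1$-drop. Once this rigidity is in place, the reduction to the type (A)/(B) dichotomy is immediate from the definitions.
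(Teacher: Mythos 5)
There is a genuine gap at the step you yourself flag as the ``key remaining ingredient.'' You place your candidate witness at a lattice point $(s_1, s_2^\ast)$ with $s_1 \gg 0$, precisely so that $H_{\L}(s_1, s_2)$ has stabilized to $H_{\L}(\infty, s_2)$ and $H_{\L}(s_1,\infty)=0$. But by that very stabilization, $H_{\L}(s_1, s_2^\ast) = H_{\L}(\infty, s_2^\ast) = H_{\L}(s_1+1, s_2^\ast)$: the $H$-function cannot drop in the $s_1$-direction at any point with $s_1 > b_1(\L)$, no matter which surgeries are L--spaces. So no hypothesis can ``force'' $H_{\L}(s_1,s_2^\ast) > H_{\L}(s_1+1,s_2^\ast)$ there, and the point you construct can never be a type (A) witness. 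Indeed it had better not be: $T(4,6)$ is type (B) with both components nontrivial trefoils and has mixed-sign L--space surgeries on nearby slopes, so nontriviality of a component cannot in general manufacture a type (A) point; the contradiction has to come from somewhere else.

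The paper extracts it from a different rigidity. For mixed-sign coefficients, Corollary \ref{generation2}(c) forces the generator of $\widehat{HF}(S^3_{d_1,d_2}(\L),\mathfrak{u})$ to lie in $\widehat{C}^{01}\oplus\widehat{C}^{10}$, so at no lattice point of the truncated region may all four hat-differentials out of $\widehat{\A}^{00}_{\bm{s}}$ vanish. At $\bm{s}=(g_1+l/2,\, l/2)$ the $\pm L_1$ differentials are killed by the truncation itself (erased vertical sides), and the $\pm L_2$ differentials are $U^{H(\bm{s})-H(s_1,\infty)}$ and, via Corollary \ref{symmetry3}, $U^{H(\bm{s})-H(s_1+l,\infty)}$; since $H(s_1,\infty)=H_1(g_1)=0$ while nontriviality of $L_2$ gives $H(\bm{s})\geq H(\infty,l/2)=H_2(0)\geq 1$, both exponents are positive, $\widehat{\A}^{00}_{\bm{s}}$ becomes a generator in $\widehat{C}^{00}$, and we have the contradiction. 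Note that the operative inequalities are $H(\bm{s})>H(s_1,\infty)$ (failure of stabilization in the $s_2$-direction) together with its conjugate --- not a drop of $H$ in the $s_1$-direction. To salvage your outline, replace the ``type (A) witness'' by the statement that $\widehat{\Phi}^{L_2}_{\bm{s}}=\widehat{\Phi}^{-L_2}_{\bm{s}}=0$ at this point, which is exactly what Corollary \ref{generation2} forbids.
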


For algebraic links, Gorsky and N\'emethi proved a stronger result by a very different method. 

\begin{theorem}\cite[Theorem 1.3.2]{GN}
Suppose that $\L=L_{1}\cup L_{2}$ is an algebraic link with two components. Then $S^{3}_{d_{1}, d_{2}}(\L)$ is an L--space for $d_{1}\gg 0$ and $d_{2}\ll 0$ if and only if $L_{2}$ is an unknot. 
\end{theorem}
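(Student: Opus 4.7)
The plan is to treat the two directions separately. The ``only if'' direction is essentially free: the introduction notes that all algebraic links are type (B), so Theorem~\ref{thm:unknot} applies directly and forces $L_{2}$ to be an unknot whenever $S^{3}_{d_{1},d_{2}}(\L)$ is an L--space with $d_{1}\gg 0$ and $d_{2}\ll 0$.

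For the ``if'' direction, suppose $L_{2}$ is an unknot. Since $\L$ is algebraic, $L_{1}$ is an iterated torus knot sitting in the solid-torus complement of $L_{2}$ as an (iterated) cable of the core. I would pursue a two-stage surgery: first perform $d_{2}$-surgery on the unknot $L_{2}$ with $d_{2}<0$, obtaining the lens space $L(|d_{2}|,1)$, and regard $L_{1}$ as a knot $K\subset L(|d_{2}|,1)$. It then suffices to show that for $d_{1}$ large enough, the induced integer surgery on $K$ (with framing correction proportional to $\text{lk}(L_{1},L_{2})^{2}/d_{2}$) is an L--space; equivalently, $K$ should be an L--space knot in $L(|d_{2}|,1)$ whose L--space threshold is eventually dominated by $d_{1}$.

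To make this quantitative, I would exploit the semigroup description of the $H$--function for algebraic links: $H_{\L}(\bm{s})$ equals, up to normalization, the co-counting function of the semigroup $\Gamma\subset\Z^{2}$ of the underlying plane curve singularity. When $L_{2}$ is an unknot, $H_{\L}(\infty,s_{2})\equiv 0$ and $H_{\L}(s_{1},\infty)=H_{L_{1}}(s_{1})$ recovers the $H$--function of the algebraic knot $L_{1}$. Plugging these into Y.~Liu's explicit truncation of the Manolescu--\OS surgery complex for $2$--component L--space links, one should be able to read off that the truncated complex has total homology rank $|\det\Lambda|$, which is the defining L--space condition.

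The hard part will be controlling the threshold on $d_{1}$ as $d_{2}\to-\infty$: one must show that the admissible $d_{1}$ form a half-infinite interval whose lower bound depends in a controlled way on $d_{2}$, and that all sufficiently large $d_{1}$ eventually fall inside this interval. Gorsky and N\'emethi sidestep the surgery complex altogether by translating the problem into the lattice cohomology of the singularity, where the semigroup combinatorics reduce the L--space condition to an explicit counting identity; reproducing this passage via the Heegaard Floer surgery formulas would be the main technical content of this direction.
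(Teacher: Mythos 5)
This statement is not proved in the paper at all: it is quoted verbatim from Gorsky--N\'emethi \cite[Theorem 1.3.2]{GN}, and the author explicitly remarks that their proof uses ``a very different method'' (lattice cohomology and the semigroup of the singularity). What the paper does prove is only the ``only if'' direction, in the greater generality of type (B) L--space links (Theorem \ref{thm:unknot}, via Proposition \ref{unknot component2} and the truncated surgery complex). Your treatment of that direction is correct and is exactly the paper's route: algebraic links are L--space links of type (B), so Theorem \ref{thm:unknot} applies and forces $L_{2}$ to be an unknot.

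The ``if'' direction, however, is where the actual content of the cited theorem lies, and your proposal does not establish it --- you say so yourself (``the hard part will be \dots'', ``reproducing this passage \dots would be the main technical content''). Concretely, the two-stage surgery reduction leaves you needing to show that $L_{1}$, viewed as a knot in $L(|d_{2}|,1)$ after $d_{2}$-surgery on the unknot $L_{2}$, admits L--space surgeries for all sufficiently large $d_{1}$, with a threshold controlled uniformly as $d_{2}\to-\infty$; this is not automatic for knots in lens spaces (large surgery on $K\subset Y$ is an L--space only if every $A_{s}$-complex has homology of rank one, i.e.\ $K$ is Floer simple), and nothing in your sketch verifies it. Likewise, knowing that $H_{\L}(\infty,s_{2})\equiv 0$ and that $H_{\L}$ is the semigroup counting function does not by itself show that the Case 3 truncation ($d_{1}>0$, $d_{2}<0$) of the Manolescu--Ozsv\'ath complex has homology of rank $|\det\Lambda|$: one would have to locate, in each Spin$^{c}$ structure, a unique generator in $\widehat{C}^{01}\oplus\widehat{C}^{10}$ and show all other classes cancel, which requires an explicit analysis of the edge maps $\widehat{\Phi}^{\pm L_{i}}$ in terms of the semigroup. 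That computation is precisely what Gorsky and N\'emethi carry out (in the lattice-cohomology model), and it is absent here. So the proposal is a correct reduction plus a correct half, but the forward implication remains a gap rather than a proof.
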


\begin{theorem}
\label{thm:hopf}
Let $\L=L_{1}\cup L_{2}$ be a nontrivial 2-component L--space link. If $S^{3}_{d_{1}, d_{2}}(\L)$ is an L--space for $d_{1}\ll 0$ and $ d_{2}\ll 0$, then $\L$ is the Hopf link. 
\end{theorem}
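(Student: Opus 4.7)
My plan is to prove Theorem \ref{thm:hopf} in three steps: a mirror argument, a reduction to unknotted components, and a rigidity argument identifying $\L$ as the Hopf link.

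\textbf{Step 1 (Mirror reduction).} Let $\bar\L$ denote the mirror of $\L$. Since $S^{3}_{d_{1},d_{2}}(\L)=-S^{3}_{-d_{1},-d_{2}}(\bar\L)$ and the L--space property is invariant under orientation reversal, the hypothesis that $S^{3}_{d_{1},d_{2}}(\L)$ is an L--space for all $d_{1},d_{2}\ll 0$ is equivalent to $S^{3}_{e_{1},e_{2}}(\bar\L)$ being an L--space for all $e_{1},e_{2}\gg 0$. The latter is precisely the defining condition for $\bar\L$ to be an L--space link, so both $\L$ and $\bar\L$ are L--space links.

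\textbf{Step 2 (Components are unknots).} Each component of a 2-component L--space link is itself an L--space knot in $S^{3}$, as can be seen by sending $d_{3-i}\to\infty$ in the integer surgery formula so that $S^{3}_{d}(L_{i})$ appears as an L--space for all large $d$. Applying this to both $\L$ and $\bar\L$, each $L_{i}$ \emph{and} its mirror $\bar L_{i}$ is an L--space knot. However, for any nontrivial L--space knot $K$ we have $g(K)\ge 1$ and, by \cite[Proposition 9.6]{OS11}, $S^{3}_{d}(K)$ is an L--space exactly for integer $d\ge 2g(K)-1>0$; via $S^{3}_{d}(\bar K)=-S^{3}_{-d}(K)$, the L--space surgery set of $\bar K$ is therefore contained in $\Z_{<0}$, contradicting that $\bar K$ is an L--space knot. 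Hence $g(L_{i})=0$ and each $L_{i}$ is an unknot.

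\textbf{Step 3 (Identification as the Hopf link).} With both components unknotted and (by the paper's orientation convention) linking number $l\ge 0$, the $H$--function $H_{\L}$ is controlled by the multivariate Alexander polynomial of $\L$ via the L--space-link surgery formula. Combining Y.\ Liu's L--space-link inequalities \cite{Liu} for $\L$ with the analogous inequalities for $\bar\L$ --- matching the positive-coefficient truncation of the Manolescu-\OS surgery complex used for $\L$ with the negative-coefficient truncation used for $\bar\L$ --- should rigidify $H_{\L}$ to equal the Hopf-link $H$--function $\max(1-s_{1}-s_{2},0)$, which forces $l=1$ and identifies $\L$ as the Hopf link. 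This rigidity step is the main obstacle: one must show that the only nonnegative-integer-valued function on $\H(\L)$ simultaneously satisfying both the $\L$-- and $\bar\L$--L--space-link inequalities, together with the unknotted-components normalization $H_{\L}(s_{1},\infty)=H_{\L}(\infty,s_{2})=0$ for $s_{i}\ge 0$, is the Hopf $H$--function.
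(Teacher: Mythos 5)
Your Steps 1 and 2 are correct and give a cleaner route to the unknottedness of the components than the paper's own argument (Lemma \ref{2unknots}, which works inside the truncated surgery complex): a sufficiently negative surgery on $\L$ is, up to orientation reversal, a sufficiently positive surgery on the mirror $\bar\L$, so the large surgery theorem forces $\bar\L$ to be an L--space link, and a nontrivial knot and its mirror cannot both be L--space knots. So far so good.

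The gap is Step 3, which you yourself flag as ``the main obstacle'': essentially all of the content of the theorem lives there, and the proposal only gestures at it. Concretely, three things are missing. First, you must rule out linking number $l\geq 2$; in the paper this is Proposition \ref{linking number 1}, a genuine computation in the truncated surgery complex at the Spin$^{c}$ structure $(0,0)$ (or $(1/2,1/2)$ for $l$ odd) showing that $\widehat{\A}^{11}_{0,0}$ and a second $0$-cell both survive in $\widehat{HF}$, contradicting the L--space hypothesis; nothing in ``combining the inequalities for $\L$ and $\bar\L$'' obviously produces this. Second, you must rule out $l=0$: a nontrivial L--space link with unknotted components and linking number zero has L--space surgeries only for $d_{1}>2b_{1}$, $d_{2}>2b_{2}$ by \cite[Theorem 5.1]{GLM}, and this input does not follow from mirror symmetry alone. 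Third, even once $l=1$ and $H_{\L}$ is pinned down, ``same $H$-function as the Hopf link'' does not by itself identify the link; the paper needs the fact that the $H$-function of an L--space link determines its Thurston polytope \cite{Liu1}, deduces that the polytope degenerates to a segment of slope one through the origin, produces an annulus representing the class $(-1,1)$ with longitudinal boundary, and only then concludes that $\L$ is the Hopf link. (As a minor point, your candidate formula $\max(1-s_{1}-s_{2},0)$ is not the Hopf-link $H$-function on $\H(\L)=(\Z+\tfrac12)^{2}$: for instance $H(-\tfrac12,-\tfrac12)=1$, not $2$.) Until these three steps are supplied, the proof is incomplete.
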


Next, we consider 2-component L--space links with unknotted components. 

\begin{theorem}
\label{thm:torus}
Let $\L=L_{1}\cup L_{2}$ be an L--space link with unknotted components and linking number $l$. If $S^{3}_{d_{1}, d_{2}}(\L)$ is an L--space for $d_{1}d_{2}<0$, then $L$ is the torus link $T(2, 2l)$. 
\end{theorem}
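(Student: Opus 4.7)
The plan is to extract from the mixed-sign L-space surgery enough information to pin down the $H$-function of $\L$, and then to identify the link from that data. First, since $d_{1}d_{2}<0$, Theorem \ref{ppositive} forbids $\L$ from being type (A), so $\L$ must be type (B). After relabeling the components I may assume $d_{1}>0>d_{2}$.

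Next I would apply Y.~Liu's explicit description of the truncated Manolescu--Ozsv\'ath surgery complex for 2-component L-space links. The condition that $S^{3}_{d_{1},d_{2}}(\L)$ is an L-space translates into a dimension count for its Heegaard Floer homology expressed as an alternating combination of $H_{\L}$-values on finitely many lattice points of $\H(\L)$. Under the sign hypothesis $d_{1}>0>d_{2}$, this identity forces strong cancellations, collapsing the ``interior'' variation of $H_{\L}$ and in particular forcing many of the coordinate edge-jumps of $H_{\L}$ to vanish.

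Now the unknotted components hypothesis yields the marginal values $H_{\L}(s_{1},\infty)=\max(0,-s_{1})$ and $H_{\L}(\infty,s_{2})=\max(0,-s_{2})$. Combined with the constraints of the previous step and the linking number $l$ (which governs the shift between the marginal $H$-functions and the bivariate $H_{\L}$), this should determine $H_{\L}$ at every lattice point and match it with the $H$-function of $T(2,2l)$, which can be computed directly from its semigroup as in \cite{GN}.

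The final step is to deduce $\L=T(2,2l)$ from this coincidence of $H$-functions. Since for an L-space link $H_{\L}$ determines the multivariable Alexander polynomial, and both components are unknots with linking number $l$, one can either invoke a classification of L-space links with these numerical invariants or compare the full Manolescu--Ozsv\'ath surgery complexes of $\L$ and $T(2,2l)$ directly to conclude that the two links have identical Heegaard Floer packages. This rigidity step, rather than the combinatorial $H$-function analysis that precedes it, is where I expect the main difficulty to lie; the surgery-complex identities from Step 2 are straightforward bookkeeping once one commits to Y.~Liu's framework, but upgrading an invariant equality to a genuine identification of links always requires some extra input.
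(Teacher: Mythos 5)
Your first three steps follow the paper's route: reduce to type (B) via Theorem \ref{ppositive}, use surgery induction on the unknotted components to get L--space surgeries for $d_1\gg 0$, $d_2\ll 0$, and then read off constraints on $H_{\L}$ from the truncated complex (the paper does this Spin$^c$-structure by Spin$^c$-structure, forcing $H(k,k)=l/2-k$ on the diagonal of the square $0\le s_i\le l/2$ because otherwise $\widehat{\A}^{00}$ would generate, contradicting Corollary \ref{generation2} in the mixed-sign case, and then filling in everything else by the monotonicity Lemma \ref{growth control} and conjugation symmetry). One small correction: the marginal values are $H_{\L}(s_1,\infty)=H_{L_1}(s_1-l/2)=\max(0,\,l/2-s_1)$, not $\max(0,-s_1)$; the linking-number shift sits inside the marginal function itself rather than between the marginals and the bivariate $H$-function.

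The genuine gap is your final step. Equality of $H$-functions, of Alexander polynomials, or even of the ``full Heegaard Floer packages'' of $\L$ and $T(2,2l)$ does not by itself imply the links are isotopic, and neither of your two proposed fixes closes this: there is no classification of 2-component L--space links by these numerical invariants to invoke, and comparing surgery complexes only yields another invariant equality. The paper's actual bridge from invariants to topology is the Thurston norm: for L--space links the $H$-function determines the multivariable Alexander polynomial and hence the Thurston polytope \cite{Liu1} (via Ozsv\'ath--Szab\'o's detection of the Thurston norm by link Floer homology), and the computed $H$-function forces the polytope to degenerate to a segment of slope $1$ through the origin. This degeneration produces an embedded annulus representing the class $(-1,1)\in H_2(S^3,\L;\Z)$ with boundary the two (unknotted) components, and a pair of unknots cobounding an annulus with linking number $l$ is precisely $T(2,2l)$. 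You correctly flagged this rigidity step as the crux, but the proposal as written does not supply the argument that actually makes it work.
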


This gives a characterization of the torus link $T(2, 2l)$.

For type (A) L--space links $\L$, the region for possible L--space surgeries is in the first quadrant, but it is still not clear which surgery is an L--space. We consider 2-component L--space links with vanishing linking numbers. 

Based on the $H$-function of the link $\L$, we define:
\begin{equation}
\label{defineb}
b_{1}(\L)=\min \{\lceil s_{1}-1 \rceil \mid H(s_{1}, s_{2})=H(\infty, s_{2}) \textup{ for all } s_{2} \}.
\end{equation}
$b_{2}(\L)$ is similarly defined.  If the context is clear, we write $b_{i}(\L)$ as $b_{i}$ for simplicity. 

\begin{theorem}\cite[Theorem 5.1]{GLM}
Assume that $\L$ is a nontrivial L--space link with unknotted components and linking number zero. Then $S^{3}_{d_{1}, d_{2}}(\L)$ is an L--space if and only if $d_{1}>2b_{1}$ and $d_{2}>2b_{2}$. 
\end{theorem}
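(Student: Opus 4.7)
The plan is to apply Y.~Liu's surgery formula for 2-component L--space links \cite{Liu} in the special case that the surgery matrix is diagonal, $\Lambda=\operatorname{diag}(d_{1},d_{2})$. Since $\L$ has unknotted components, the marginal H--functions $H(\infty,s_{2})$ and $H(s_{1},\infty)$ agree with that of the unknot and vanish for $s_{1},s_{2}\ge 0$. Combined with the definition of $b_{i}$, this forces $\L$ to be a type~(A) link, so by Theorem~\ref{ppositive} any L--space surgery on $\L$ has $d_{1},d_{2}>0$, and only the positive-positive truncation of the surgery complex is relevant.

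First, I would set up the surgery complex at a fixed spin$^{c}$ structure $\mathfrak{s}$ on $S^{3}_{d_{1},d_{2}}(\L)$. Because $\Lambda$ is diagonal, $\mathfrak{s}$ is labelled by a class in $\Z^{2}/(d_{1}\Z\oplus d_{2}\Z)$; the generators of the truncated complex are lattice representatives $(s_{1},s_{2})\in\H(\L)$ together with maps into the partially and fully stabilized vertices of the Manolescu--\OS\ hypercube. All differentials are controlled by differences of $H$, and the L--space condition at $\mathfrak{s}$ reduces to the statement that this complex has homology a single copy of $\F$.

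For the sufficiency direction, assume $d_{i}>2b_{i}$. By the definition of $b_{1}$, the equality $H(s_{1},\cdot)=H(\infty,\cdot)$ holds whenever $s_{1}\ge b_{1}+1$, and symmetrically for $b_{2}$. Within any class in $\Z^{2}/(d_{1}\Z\oplus d_{2}\Z)$ I can choose a representative $(s_{1},s_{2})$ with $s_{i}\le b_{i}$ and $s_{i}+d_{i}\ge b_{i}+1$ for $i=1,2$, so that one vertex of the hypercube lies in the ``trivial'' region in each coordinate. Repeated cancellation of acyclic mapping cones then collapses the complex to a single generator, establishing that $S^{3}_{d_{1},d_{2}}(\L)$ is an L--space.

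For necessity, suppose $d_{1}\le 2b_{1}$. I would exhibit a spin$^{c}$ structure whose surgery complex retains extra homology. The inequality allows me to pick $s_{1}$ so that both $s_{1}$ and $s_{1}+d_{1}$ lie in the ``bad'' range where $H(s_{1},s_{2})>H(\infty,s_{2})$ for some $s_{2}$; choosing such an $s_{2}$, the corresponding lattice generator and its $d_{1}$-shift both contribute nontrivially to the hypercube and cannot be cancelled, violating the L--space property. The $d_{2}$ case is symmetric. The main obstacle will be the necessity direction: one must argue that the excess generator genuinely survives in homology and is not killed by some composition of maps involving the second coordinate. This relies crucially on the vanishing linking number, which decouples the horizontal and vertical mapping cones in the hypercube and ensures that an obstruction isolated in one direction is not absorbed by the other.
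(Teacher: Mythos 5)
This statement is quoted from \cite[Theorem 5.1]{GLM} and is not reproved in the paper, but the paper's Section \ref{sec:linkingnumber0} contains exactly the ingredients of a proof (Lemmas \ref{gap distance}, \ref{one unknot}, the surgery induction Lemma \ref{induction}, and Corollary \ref{generation2}), so your proposal can be measured against those. Your setup and your sufficiency direction are essentially right: positivity of $d_1,d_2$ follows from Theorem \ref{ppositive} once you note that nontriviality forces $b_i\geq 0=g_i+l/2$ (you should say this explicitly --- it is where nontriviality enters), and for $d_i>2b_i$ the unique representative of each Spin$^c$ class in $[b_i+1-d_i,b_i]$ leaves every other column and row cancellable, which is the standard large-surgery/truncation argument.

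The necessity direction, however, has two genuine gaps. First, the claim that vanishing linking number ``decouples the horizontal and vertical mapping cones'' is false: linking number zero does not give $H(s_1,s_2)=H_1(s_1)+H_2(s_2)$ (the Whitehead link has $H(0,0)=1$), so the vertical maps $\Phi^{\pm L_2}_{\bm{s}}$ do not vanish or split off, and your ``excess generator'' could a priori be a boundary through them. The actual mechanism is different: since $L_2$ is unknotted and $d_2>0$, the knot surgery $S^3_{d_2}(L_2)$ is an L--space, so Lemma \ref{induction} lets you increase $d_2$ past $2b_2$ without leaving the class of L--space surgeries; only then does the truncation erase the vertical edges and isolate the obstruction in the $L_1$-direction. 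Second, ``both $s_1$ and $s_1+d_1$ lie in the bad range'' is not the right condition. The two maps into $\widehat{\A}^{10}_{s_1,s_2}$ are $\widehat{\Phi}^{L_1}_{s_1,s_2}$ and $\widehat{\Phi}^{-L_1}_{s_1-d_1,s_2}$, and the exponent of the latter is $H(d_1-s_1,-s_2)-H(\infty,-s_2)$, i.e.\ governed by the conjugation symmetry (Corollary \ref{symmetry3}) with a linear correction term, not by membership of $s_1+d_1$ in $\{t_1\le b_1\}$; moreover you need one $s_2$ witnessing both failures simultaneously. The paper's Lemma \ref{one unknot} achieves this by taking the point $(b_1,0)$ and using unknottedness of $L_2$ twice: to get $H_2(0)=0$, and to show $H(b_1,0)>0$ (else $H(b_1,\cdot)\equiv H_2(\cdot)$, contradicting minimality of $b_1$), whence $H(d_1-b_1,0)\ge H(b_1,0)>0$ when $d_1\le 2b_1$. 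Your sketch uses unknottedness nowhere in the necessity direction, which is a warning sign: for general linking-number-zero L--space links the paper's Theorem \ref{zero linking surgeries} explicitly cannot exclude the range $b_i<d_i\le 2b_i$ (the white regions in Figure \ref{F9}), so an argument that ignores the unknottedness hypothesis would prove too much.
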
 

For general 2-component L--space links with vanishing linking numbers, $b_{i}\geq g(L_{i})-1$ for $i=1, 2$. We indicate the  possible L--space regions of such links as follows. 

\begin{theorem}
Let $\L=L_{1}\cup L_{2}$ be an oriented L--space link with  linking number zero. Suppose that $b_{i}=g_{i}-1$ for $i=1, 2$ where $g_{i} $ is the genus of the knot $L_{i}$. Then $S^{3}_{d_{1}, d_{2}}(\L)$ is an L--space if and only if $d_{1}>2b_{1}$ and $d_{2}>2b_{2}$. 

\end{theorem}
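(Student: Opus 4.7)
The plan is to use Y. Liu's explicit truncation \cite{Liu} of the Manolescu-\OS surgery complex for 2-component L--space links, combined with the sharpness hypothesis $b_i = g_i - 1$, to reduce the question to a pair of single-component L--space knot surgeries.

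For the ``only if'' direction, suppose $S^3_{d_1, d_2}(\L)$ is an L--space. A standard argument using the surgery exact triangle on $L_2$, combined with the upward monotonicity of the L--space surgery region of the L--space link $\L$, allows one to enlarge $d_2$ without losing the L--space property and pass to the limit $d_2 \to \infty$, yielding that $S^3_{d_1}(L_1)$ is itself an L--space. \OS's characterization of L--space surgeries on knots \cite[Proposition 9.6]{OS11} then gives $d_1 \geq 2g(L_1) - 1$, equivalently $d_1 > 2b_1$. The symmetric argument yields $d_2 > 2b_2$.

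For the ``if'' direction, assume $d_1 > 2b_1$ and $d_2 > 2b_2$, so that $d_i \geq 2g_i - 1$. The hypothesis $b_i = g_i - 1$ together with the definition of $b_i$ forces sharp stabilization of the $H$-function:
\begin{equation*}
H_\L(s_1, s_2) = h_{L_2}(s_2) \ \text{for } s_1 \geq g_1, \qquad H_\L(s_1, s_2) = h_{L_1}(s_1) \ \text{for } s_2 \geq g_2,
\end{equation*}
so that $H_\L(s_1,s_2) = h_{L_1}(s_1) + h_{L_2}(s_2)$ whenever $s_1 \geq g_1$ or $s_2 \geq g_2$. I would plug this into Y. Liu's truncated surgery complex. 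In every spin-c summand admitting a representative with $s_i \geq g_i$ for some $i$, the complex splits as a tensor product of the individual knot surgery complexes for $L_1$ and $L_2$, which are both L--space surgeries by \OS; this forces such summands to contribute rank $1$ to $\widehat{HF}(S^3_{d_1, d_2}(\L))$. Summing over all spin-c structures then gives the expected total rank $d_1 d_2 = |\det \Lambda|$, establishing the L--space property.

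The main obstacle lies in the ``if'' direction, specifically in controlling the spin-c summands whose representatives fall inside the central box $[-g_1+1, g_1-1] \times [-g_2+1, g_2-1]$, where the $H$-function does not split as $h_{L_1} + h_{L_2}$ and the tensor product reduction is unavailable. The plan here is to invoke the L--space link property, namely that the consecutive differences $H_\L(\bm{s}) - H_\L(\bm{s} - \bm{e}_i)$ take values in $\{0, 1\}$, together with the combinatorics of Liu's truncation, to prove acyclicity of the relevant subcomplexes and thereby bound each interior summand's rank by $1$. Matching this local contribution to the global count predicted by the product structure at infinity is the principal technical step.
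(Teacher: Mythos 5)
Both directions of your proposal have genuine gaps. In the ``only if'' direction, the step ``enlarge $d_2$ \dots and pass to the limit $d_2\to\infty$'' to deduce that $S^3_{d_1}(L_1)$ is an L--space is circular: the monotonicity you invoke (Lemma \ref{induction}) allows you to increase $d_2$ only under the hypothesis that the complementary sublink surgery $S^3_{d_1}(L_1)$ is \emph{already} known to be an L--space. Worse, the conclusion you are after --- that an L--space surgery on $\L$ forces \emph{both} component surgeries to be L--spaces --- is false for general $2$-component L--space links (the torus link $T(4,6)$ has L--space surgeries with $d_2\ll 0$ although both components are trefoils), and even for linking number zero the available general statement (Proposition \ref{knot link}) only gives that \emph{one} of $S^3_{d_1}(L_1)$, $S^3_{d_2}(L_2)$ is an L--space. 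Upgrading ``one'' to ``both'' is the real content of this direction, and the paper does it with two further truncated-complex computations: Lemma \ref{gap distance2} shows that $d_2>2b_2$ and $d_1>0$ force $d_1\ge 2g_1-1$, and Proposition \ref{unknot component} excludes the quadrants where one coefficient is negative, because the components are not unknots. Your exact-triangle sketch supplies none of this.

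In the ``if'' direction you have made the problem harder than it is and then left the hard part open. Since $b_i=g_i-1$, the hypothesis $d_i>2b_i$ puts you exactly in the large-surgery regime determined by the truncation constants: in every Spin$^{c}$ structure the truncated perturbed complex cancels down to a single $\A^{00}_{\bm{s}}$ with homology $\F[[U]]$, because consecutive representatives of a Spin$^{c}$ class differ by $d_i>2b_i$ and hence at most one of them lies in the range where $\Phi^{\pm L_i}$ fails to be a quasi-isomorphism. This is the one-line ``large surgery formula'' argument the paper uses, and there is no residual ``central box'' to control. Your proposed tensor-product splitting is moreover not justified: the perturbed differentials $U^{H(\bm{s})-H(\infty,s_2)}$ do not turn the surgery complex into a literal product of the two knot surgery complexes merely because $H$ is additive on part of the lattice. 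Since the step you yourself identify as the principal technical one is not carried out, this direction is not a proof as written.
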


\begin{theorem}
Let $\L=L_{1}\cup L_{2}$ be an oriented nontrivial L--space link with  linking number zero.  Suppose that $b_{i}\geq g_{i}$ for $i=1$ or $2$. The possible L--space surgeries are indicated in Figure \ref{F11}. The green color indicated the region of L--space surgeries. Points in the red regions won't give $L$--space surgeries, and the white regions are points for possible L--space surgeries. 

\end{theorem}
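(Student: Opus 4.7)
My plan is to analyze the rank of the truncated surgery complex of Manolescu--\OS, as made explicit for 2--component L--space links by Y.\,Liu in \cite{Liu}, separately in each of the three regions of the $(d_1,d_2)$--plane shown in Figure~\ref{F11}. Because $\mathrm{lk}(L_1,L_2)=0$, the surgery matrix is $\Lambda=\mathrm{diag}(d_1,d_2)$, so $|H_1(S^3_{d_1,d_2}(\L))|=|d_1 d_2|$ whenever $d_1 d_2\neq 0$, and the L--space condition reduces to an exact rank count for the surgery complex.

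For the green region $d_1>2b_1$, $d_2>2b_2$, I would mimic the argument of \cite[Theorem 5.1]{GLM}. By the definition of $b_i$ in \eqref{defineb}, the $H$--function stabilizes to $H_{L_j}$ whenever one coordinate exceeds $b_i$; since $L_1,L_2$ are L--space knots and $2g_i-1\leq 2b_i+1\leq d_i$ (using $b_i\geq g_i-1$ and the hypothesis $d_i>2b_i$), the tails of the surgery complex contribute exactly as they would for a single L--space knot surgery. Summing over lattice points then yields total rank $d_1 d_2$, so the green region consists of L--space surgeries. The unknottedness assumption of \cite{GLM} is not essential here; only the L--space knot property of the components and the stabilization behavior of $H$ are used.

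For the red region I would combine Theorem~\ref{ppositive} with a large--surgery limit. Sending $d_2\to\infty$ recovers $S^3_{d_1}(L_1)$, which is an L--space iff $d_1\geq 2g_1-1$; an analogous statement holds with the roles of the components swapped. A surgery exact triangle argument together with Y.\,Liu's explicit truncation in \cite{Liu} is then used to propagate the non--L--space obstruction from very large $d_2$ down to finite $d_2$: if $d_1<2g_1-1$ then the mapping cone computing $\widehat{HF}(S^3_{d_1,d_2}(\L))$ acquires contributions from the unstabilized part of $H$ that cannot be cancelled by varying $d_2$. Hence the red region is the relevant portion of $\{d_1<2g_1-1\}\cup\{d_2<2g_2-1\}$, with the exact shape determined by whichever of $b_1\geq g_1$ or $b_2\geq g_2$ is assumed.

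The remaining white region, roughly $2g_i-1\leq d_i\leq 2b_i$ in the index where $b_i\geq g_i$, is where the rank of the truncated complex depends on combinatorial features of the $H$--function beyond what $b_i$ and $g_i$ alone encode; explicit sample links should realize both L--space and non--L--space surgeries inside this strip, so no uniform conclusion follows from the hypotheses. The main obstacle will be the red--region obstruction: it is not a priori clear that surgery on $L_2$ cannot ``repair'' a non--L--space $S^3_{d_1}(L_1)$, and making the propagation rigorous requires a delicate analysis of Y.\,Liu's truncated complex together with Theorem~\ref{ppositive}, taking care not to circularly invoke the very L--space characterization we are trying to prove.
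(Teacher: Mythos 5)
Your outline matches the paper's strategy at the level of scaffolding (truncated surgery complex, large-surgery argument for the green region, Theorem \ref{ppositive} to confine everything to the first quadrant), but the heart of the theorem --- the proof that the red regions contain no L--space surgeries --- is exactly the part you leave as an acknowledged ``obstacle,'' and your sketch of it is not the argument that works. First, the shape you propose for the red region, ``the relevant portion of $\{d_1<2g_1-1\}\cup\{d_2<2g_2-1\}$,'' is incorrect: the paper's red regions are cut out by \emph{two} independent constraints, $d_1\geq 2g_1-1$ (Lemma \ref{gap distance2}) \emph{and} $d_1>b_1$ (Lemma \ref{gap distance}), each holding under the hypothesis $d_2>2b_2$, plus the constraint from Proposition \ref{knot link} that at least one of $S^3_{d_1}(L_1)$, $S^3_{d_2}(L_2)$ is an L--space. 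When $b_1>2g_1-1$ the strip $2g_1-1\leq d_1\leq b_1$ is red even though $S^3_{d_1}(L_1)$ is an L--space there, so no amount of reasoning about the single-knot surgery $S^3_{d_1}(L_1)$ alone can produce this obstruction; it comes from exhibiting, at a Spin$^c$ structure $(b_1,s_2)$ with $H(b_1,s_2)>H_2(s_2)$, two surviving generators of the hat-version truncated complex, contradicting Corollary \ref{generation2}.

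Second, your propagation mechanism runs in the wrong direction. Lemma \ref{induction} only lets you move a hypothetical L--space surgery to \emph{larger} coefficients (given the sign of $\det\Lambda$ here), so the paper's logic is: first kill the outer regions (1, 2, 12, 16 in Figure \ref{F9}) directly via Lemmas \ref{gap distance} and \ref{gap distance2}, then show that an L--space surgery in an inner region would propagate upward into an already-excluded region, a contradiction. There is no ``surgery exact triangle'' step and no limit $d_2\to\infty$; trying to propagate a non-L--space conclusion \emph{downward} from large $d_2$, as you suggest, is precisely the ``repair'' problem you flag, and it is not resolved by your sketch. The concrete missing ingredients are Lemmas \ref{gap distance}, \ref{gap distance2}, Proposition \ref{knot link}, and the case analysis comparing $b_i$ with $2g_i-1$ that produces the three different pictures in Figure \ref{F11}; without them the proposal does not establish the red regions, and hence does not prove the theorem.
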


\begin{figure}[H]
\centering
\includegraphics[width=6.0in]{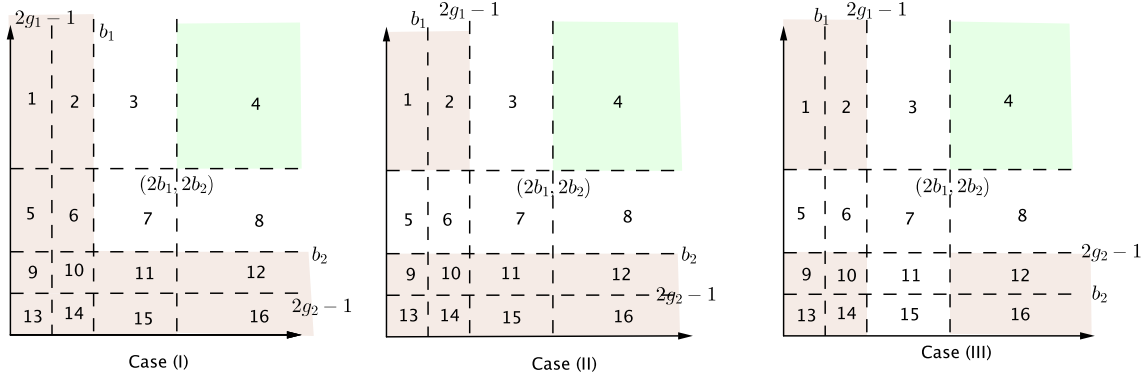}
\caption{L-space surgeries for $\L$ \label{F11}}
\end{figure}

If the  symmetrized Alexander polynomial of the 2-component L--space  link $\L$ satisfies some additional properties, we have a more precise description of the set of L--space surgeries.

A lattice point $\bm{s}=(s_{1}, s_{2})\in \H(\L)$ is called \emph{maximal} if $H_{\L}(\bm{s})=1, H_{\L}(s_{1}+1, s_{2})=H_{\L}(s_{1}, s_{2}+1)=0$.

\begin{theorem}
\label{thm:if and only}
Let $\L=L_{1}\cup L_{2}$ be an L--space link with $b_{1}=s_{1}$  and $b_{2}=s'_{2}$ for maximal lattice points $(s_{1}, s_{2})$ and  $(s'_{1}, s'_{2})$.  Suppose that the coefficients of $t_{1}^{-s_{1}-1/2}t_{2}^{s_{2}+1/2}$ and $t_{1}^{s'_{1}+1/2}t_{2}^{-s'_{2}-1/2}$ in the symmetrized Alexander polynomial $\Delta_{\L}(t_{1}, t_{2})$ are nonzero. Then $S^{3}_{d_{1}, d_{2}}(\L)$ is an L--space if and only if $d_{1}>2b_{1}$ and $d_{2}>2b_{2}$. 
\end{theorem}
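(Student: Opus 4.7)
The plan is to analyze Y.\ Liu's explicit description \cite{Liu} of the Manolescu--Ozsv\'ath truncated surgery complex for 2-component L--space links, and to determine when its total rank matches $|\det\Lambda|$, the condition for $S^{3}_{d_{1},d_{2}}(\L)$ to be an L--space. For the forward implication, if $d_{1}\le 2b_{1}$, then by the definition of $b_{1}$ in \eqref{defineb} there exists a lattice point $(s_{1},s_{2})$ with $s_{1}\le b_{1}$ at which $H(s_{1},s_{2})>H(\infty,s_{2})$. The corresponding summand of the truncated complex contributes a generator that cannot be killed by the surgery differentials at slope $d_{1}$, producing total rank strictly greater than $|\det\Lambda|$ and contradicting the L--space hypothesis. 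The same argument applied to the second coordinate forces $d_{2}>2b_{2}$.

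For the backward implication, which is the main content, the hypothesis that $b_{1}=s_{1}$ for a maximal lattice point $(s_{1},s_{2})$ pins down the local shape of the H-function: $H$ drops from $1$ to $0$ both rightward and upward at $(s_{1},s_{2})$, and after the truncation threshold $2b_{1}$ the complex becomes combinatorially the narrowest possible ``boundary strip,'' analogous to the situation analyzed in \cite{GLM}. A parallel statement holds at $(s'_{1},s'_{2})$. I would then translate the Alexander polynomial coefficient hypotheses via the standard inclusion--exclusion identity expressing $\Delta_{\L}(t_{1},t_{2})$ as a generating function of discrete Laplacians of $H$; the nonvanishing of the coefficients of $t_{1}^{-s_{1}-1/2}t_{2}^{s_{2}+1/2}$ and $t_{1}^{s'_{1}+1/2}t_{2}^{-s'_{2}-1/2}$ becomes a pair of $H$-function inequalities near the reflected lattice points $(-s_{1}-1,s_{2})$ and $(s'_{1},-s'_{2}-1)$. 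Using the conjugation symmetry of $H$ to transport these back to the positive quadrant, the combined hypotheses force all remaining generators of the truncated complex to cancel in pairs under the surgery differentials, yielding rank exactly $|\det\Lambda|$ and therefore an L--space.

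The hard part will be the bookkeeping in the backward direction: extracting from the two nonzero Alexander polynomial coefficients the precise $H$-function data needed to cancel every potentially surviving generator, uniformly in $d_{1}>2b_{1}$ and $d_{2}>2b_{2}$. The L--space link property (each unit square of $H$ has differences in $\{0,1\}$) together with the maximal lattice point hypothesis should supply the correct local structure on the ``near'' side, while the Alexander polynomial condition, transported by symmetry, should supply the complementary global data on the ``far'' side; proving these fit together with no residual non-cancelling generators outside the truncation window is where I expect the technical subtlety to lie.
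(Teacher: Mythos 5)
There is a genuine gap, and it comes from misallocating where the work (and the hypotheses) live. In the paper the implication ``$d_{1}>2b_{1}$, $d_{2}>2b_{2}$ $\Rightarrow$ L--space'' is the \emph{easy} direction: by Definition \ref{def:b} the constants $b_{i}$ are exactly the truncation thresholds, so once $d_{i}>2b_{i}$ the truncated complex in each Spin$^{c}$ structure collapses to a single tower by the standard large-surgery/zig-zag cancellation; no Alexander polynomial input is needed. All of the content, and both Alexander coefficient hypotheses, sit in the other direction, ``L--space $\Rightarrow$ $d_{i}>2b_{i}$'' --- which is precisely the direction your sketch treats as routine. Your argument there is: at $(b_{1},s_{2})$ one has $H(b_{1},s_{2})>H(\infty,s_{2})$, hence $\widehat{\Phi}^{L_{1}}_{b_{1},s_{2}}=0$, hence a surviving generator in $\widehat{C}^{10}$. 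This does not follow. The summand $\widehat{\A}^{10}_{b_{1},s_{2}}$ also receives the map $\widehat{\Phi}^{-L_{1}}$ from $\widehat{\A}^{00}_{(b_{1},s_{2})-\Lambda_{1}}$, whose exponent is $H(-s_{1},-s_{2})-H(\infty,-s_{2})$ evaluated at the translated point; if that map is an isomorphism mod $U$ the generator cancels and there is no contradiction. Killing this second map is exactly what the nonvanishing of the coefficient of $t_{1}^{-s_{1}-1/2}t_{2}^{s_{2}+1/2}$ buys (via \eqref{computeh} and the conjugation symmetry of Corollary \ref{symmetry3}, which pin down $H$ near $(-s_{1}-1,s_{2})$); this is the content of Proposition \ref{symmetry}. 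Your instinct that the Alexander coefficients encode $H$-function data at reflected lattice points is right, but you have deployed it in the direction where it is not needed, leaving the direction where it is needed unsupported. Without that hypothesis the claimed implication is simply false in general --- compare Theorem \ref{zero linking surgeries}, where type (A) links may admit L--space surgeries with $d_{1}\leq 2b_{1}$.

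A second, smaller omission: the per-coordinate argument (Proposition \ref{symmetry}) needs $d_{2}>2b_{2}$ already in hand in order to truncate to the strip of Figure \ref{surgery3} before one can conclude $d_{1}>2b_{1}$, and symmetrically. Your sketch does not say how to break this circularity when a priori neither inequality is known. The paper bootstraps via Lemma \ref{knot L--space} (for a type (A) link, an L--space surgery forces one of $S^{3}_{d_{i}}(L_{i})$ to be an L--space) together with the surgery induction of Lemma \ref{induction}, which lets one slide the other coefficient up to $2b_{2}+1$ without leaving the set of L--space surgeries; only then does the reflected-point argument run. Any correct proof needs some version of this step.
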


For example, the Whitehead link satisfies the assumption in Theorem \ref{thm:if and only}. Furthermore, if the link $\L=L_{1}\cup L_{2}$ satisfies the assumption in Theorem \ref{thm:if and only}, the cable link $\L_{p, q}=L_{(p, q)}\cup L_{2}$ also satisfies the assumption where $p, q$ are coprime positive integers with $q/p$ sufficiently large and $L_{(p, q)}$ denotes the $(p, q)$-cable of $L_{1}$. Note that $\L_{p,q}$ is also an L--space link if $\L$ is an L--space link \cite{BG}.

\begin{theorem}
\label{thm:cable}
Suppose that $\L$ is an L--space link that satisfies the assumption in  Theorem \ref{thm:if and only}. Then $S^{3}_{d_{1}, d_{2}}(\L_{p, q})$ is an L--space if and only if $d_{1}>2b_{1}(\L_{p, q})$ and $d_{2}>2b_{2}(\L_{p, q})$ .
\end{theorem}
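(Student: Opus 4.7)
The plan is to deduce this directly from Theorem \ref{thm:if and only} applied to $\L_{p,q}$, so the whole proof reduces to verifying that $\L_{p,q}$ inherits the hypotheses of that theorem. Three items must be checked: (i) $\L_{p,q}$ is an L--space link, which is the cited result of Borodzik--Gorsky; (ii) $b_1(\L_{p,q})$ and $b_2(\L_{p,q})$ are each realized by a maximal lattice point for $\L_{p,q}$; and (iii) the two prescribed monomials in $\Delta_{\L_{p,q}}(t_1, t_2)$ have nonzero coefficients. Once (i)--(iii) are in place, Theorem \ref{thm:if and only} gives the stated equivalence.

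For (ii) and (iii) I would use the cabling formula
\[
\Delta_{\L_{p,q}}(t_1, t_2) \;=\; \Delta_{\L}(t_1^{p}, t_2)\,\Delta_{T(p,q)}(t_1),
\]
(after symmetrization) together with the standard relation between the $H$--function and $\Delta$ for L--space links. Since only the first component is cabled, the second coordinate of the $H$--function transforms trivially, so the maximal lattice point governing $b_2(\L_{p,q})$ is obtained from $(s'_1, s'_2)$ by rescaling the first coordinate by $p$ and shifting by the torus knot contribution; similarly $b_1(\L_{p,q})$ is determined by $(s_1, s_2)$ combined with the unique maximal lattice point of $T(p,q)$. I would then identify the two target monomials of $\Delta_{\L_{p,q}}$: by the cabling formula each splits as a product of a coefficient of $\Delta_\L$ at $(-s_1-\tfrac12, s_2+\tfrac12)$ or $(s'_1+\tfrac12, -s'_2-\tfrac12)$ (nonzero by hypothesis on $\L$) and an extremal coefficient of $\Delta_{T(p,q)}$, which is $\pm 1$ at the edges of its Newton polygon.

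The main obstacle, and the reason the hypothesis \emph{$q/p$ sufficiently large} appears, is ensuring that this factorization of coefficients really picks out a single contribution rather than a cancellation of several. In the product $\Delta_{\L}(t_1^p,t_2)\,\Delta_{T(p,q)}(t_1)$, many different pairings of monomials can land on the same monomial in $\Delta_{\L_{p,q}}$; for generic $q/p$ one must show that at the specific monomial corresponding to the maximal lattice point, only the extremal monomial of $\Delta_{T(p,q)}$ paired with the hypothesized monomial of $\Delta_\L$ contributes. Taking $q/p$ large enough pushes the support of $\Delta_{T(p,q)}$ so far past the support of $\Delta_\L(t_1^p,t_2)$ that the extremal coefficient is isolated, which also pins down the maximal lattice point in (ii). After this combinatorial check on Newton polygons, conditions (ii) and (iii) follow and Theorem \ref{thm:if and only} is applied to conclude.
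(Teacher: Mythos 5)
Your proposal follows the same overall route as the paper: reduce Theorem \ref{thm:cable} to Theorem \ref{thm:if and only} applied to $\L_{p,q}$ by checking, via Turaev's cabling formula for the Alexander polynomial, that the $b_i$, the maximal lattice points, and the two off-diagonal coefficients all transform compatibly; this is precisely the content of Lemmas \ref{tranb}, \ref{mama} and \ref{cablech}. Two points of comparison. First, your cabling formula is slightly misstated: the correct factor is $(t_1^{pq/2}-t_1^{-pq/2})/(t_1^{q/2}-t_1^{-q/2})=t_1^{q(p-1)/2}+t_1^{q(p-3)/2}+\cdots+t_1^{-q(p-1)/2}$, which differs from $\Delta_{T(p,q)}(t_1)$ by the factor $(t_1^{p/2}-t_1^{-p/2})/(t_1^{1/2}-t_1^{-1/2})$; since the correct factor still has unit extremal coefficients and exponents spaced $q$ apart, this does not damage your argument (and the theorem only needs the intrinsic relation between $b_i(\L_{p,q})$ and the maximal lattice points of $\L_{p,q}$, not their explicit values), but it would throw off any explicit computation of $b_1(\L_{p,q})$. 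Second, your no-cancellation step uses a genuinely different mechanism: you separate the supports of the bands by taking $q/p$ large, whereas Lemma \ref{cablech} observes that a collision of exponents forces $(x+s_1)p=-q(p-l-1)$ with $0\le p-l-1\le p-1$, which by coprimality of $p$ and $q$ admits only the solution $x=-s_1$. The paper's argument is sharper in that it requires no largeness of $q/p$ beyond what is already assumed to make $\L_{p,q}$ an L--space link, while yours needs $q/p$ large relative to the $t_1$-breadth of $\Delta_\L$; since that is covered by the standing hypothesis, both arguments close the gap and the proposal is correct.
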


\begin{remark}
The constants $b_{1}(\L_{p, q})$ and $b_{2}(\L_{p, q})$ can be obtained from $b_{1}(\L)$ and $b_{2}(\L)$, see Lemma \ref{tranb}. 

\end{remark}

\begin{corollary}
\label{whitehead}
Let $Wh=L_{1}\cup L_{2}$ denote the Whitehead link and $Wh_{cab}=L_{(p_{1}, q_{1})}\cup L_{(p_{2}, q_{2})}$ be the cable link where $L_{(p_{i}, q_{i})}$ is the $(p_{i}, q_{i})$-cable of $L_{i}$ and $p_{i}, q_{i}$ are coprime positive integers with $q_{i}/p_{i}$ sufficiently large. The surgery manifold $S^{3}_{d_{1}, d_{2}}(Wh_{cab})$ is an $L$-space if and only if $d_i\geq p_{i}q_{i}+p_{i}-q_{i}-1$ for $i=1, 2$.

\end{corollary}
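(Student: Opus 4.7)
The plan is to combine Theorem \ref{thm:cable}, applied iteratively to both components of the Whitehead link, with the cable formula of Lemma \ref{tranb} for the constants $b_i$.

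First I would verify that $Wh$ enters the framework of Theorem \ref{thm:if and only}. The paper notes explicitly that $Wh$ satisfies the Alexander-polynomial hypothesis of that theorem, and a direct inspection of the $H$-function of the Whitehead link gives $b_1(Wh)=b_2(Wh)=0$, consistent with the known fact that $S^{3}_{d_1,d_2}(Wh)$ is an L-space if and only if $d_1,d_2\geq 1$. By the stability statement made immediately before Theorem \ref{thm:cable}, cabling one component preserves this hypothesis, so $L_{(p_1,q_1)}\cup L_2$ still satisfies it; applying the same stability to the other component shows that $Wh_{cab}=L_{(p_1,q_1)}\cup L_{(p_2,q_2)}$ also satisfies the hypothesis of Theorem \ref{thm:if and only}. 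Two successive uses of Theorem \ref{thm:cable}, one for each component, then yield: $S^{3}_{d_1,d_2}(Wh_{cab})$ is an L-space if and only if $d_1>2b_1(Wh_{cab})$ and $d_2>2b_2(Wh_{cab})$.

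It then remains to compute $b_i(Wh_{cab})$ explicitly. Using Lemma \ref{tranb} to propagate the cable shift through each component, starting from $b_i(Wh)=0$, I expect the output $2b_i(Wh_{cab})+1=p_iq_iq_i+p_i-q_i-1$, which is exactly the threshold in the corollary. Substituting back into the characterization above then gives the stated equivalence.

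The main obstacle is matching framings in the cable computation. The Seifert framing of the $(p_i,q_i)$-cable of an unknotted component in $S^3$ shifts by $p_iq_i$ relative to the pattern framing on the boundary of the solid torus, and this shift propagates into Lemma \ref{tranb} in a way that must be reconciled carefully with the integer surgery coefficients $d_i$ (in particular, the threshold $p_iq_i+p_i-q_i-1$ differs from the naive torus-knot threshold $p_iq_i-p_i-q_i$ by exactly $2p_i-1$, which should be explained by this framing shift together with the contribution of the other link component to the $H$-function). Some care is also required in the iteration: after cabling the first component, I must re-verify the Alexander-polynomial hypothesis of Theorem \ref{thm:if and only} before applying Theorem \ref{thm:cable} a second time, which reduces to an explicit computation with the Alexander polynomial of a cable link via Torres' formula. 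Once these conventions are settled, the corollary follows by direct substitution.
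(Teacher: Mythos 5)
Your proposal follows the same route as the paper's own proof: verify that $Wh$ satisfies the hypothesis of Theorem \ref{thm:if and only} via its Alexander polynomial and $H$-function (giving the maximal lattice point $(0,0)$ and $b_{1}(Wh)=b_{2}(Wh)=0$), apply Theorem \ref{thm:cable} to the cabled link, and compute $b_{i}(Wh_{cab})$ from Lemma \ref{tranb}. The only corrections are bookkeeping: the correct identity is $2b_{i}(Wh_{cab})=p_{i}q_{i}+p_{i}-q_{i}-1$ (not $2b_{i}+1$), and the framing reconciliation you worry about is already absorbed into Lemma \ref{tranb}, which computes $b_{1}(\L_{p,q})$ directly from Turaev's formula for the Alexander polynomial of the cable, so no separate comparison with a torus-knot threshold is needed.
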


The main ingredient of the proofs is Manolescu-\OS truncated surgery complex. For a 2-component L--space link $\L=L_{1}\cup L_{2}$,  the subcomplexes $\A^{00}_{\bm{s}}, \A^{01}_{\bm{s}}, \A^{10}_{\bm{s}}$ and $\A^{11}_{\bm{s}}$ are used to keep track of the  filtration information induced by the link $\L$, its sublinks $L_{1}, L_{2}$ and $\emptyset$. We construct a CW-complex corresponding to this truncated surgery complex. More precisely, we associate a 2-dimensional cell to  $\A^{00}_{\bm{s}}$, a 1-dimensional cell to $\A^{01}_{\bm{s}}$ or $\A^{10}_{\bm{s}}$ and a 0-dimensional cell to $\A^{11}_{\bm{s}}$. The singular homology of the CW complex corresponds to the generators of the free part of $HF^{-}(S^{3}_{d_{1}, d_{2}}(\L), \mathfrak{s})$. If the surgery $S^{3}_{d_{1}, d_{2}}(\L)$ is an L--space, we should be able to locate the generator of $HF^{-}(S^{3}_{d_{1}, d_{2}}(\L), \mathfrak{s})$. For example, if $d_{1}\ll 0, d_{2}\ll 0$, the CW complex corresponding to the truncated surgery complex in each Spin$^{c}$ structure is a square. It is contractible, so its singular homology is generated by a class of a $0$-cell. Then the generator of $HF^{-}(S^{3}_{d_{1}, d_{2}}(\L), \mathfrak{s})$ is $H_{\ast}(\A^{11}_{\bm{s}})$ for some $\bm{s}\in \H(\L)$. For details, see Section \ref{sec:truncation} and \cite{GLM}. This will give restrictions to the differentials in the surgery complex, which is related to the $H$-function.

\medskip
{\bf Organization of the paper.} In Section \ref{sec:the H-function}, we give the definition and properties of  the $H$-function for oriented links. In  Section \ref{sec:L-space links}, we give  the definition and properties of L--space links and give a way to compute the $H$-function of L--space links in terms of their Alexander polynomials. In Section \ref{sec:truncation}, we review  the truncated surgery complex introduced by Manolescu and \OS \cite{MO} and associate to it  a CW complex. In Section \ref{typea}, we discuss type (A) L--space links and prove Theorem \ref{ppositive}. In Section \ref{typeb}, we discuss type (B) L--space links and prove Theorem \ref{thm:unknot}, Theorem \ref{thm:hopf} and Theorem \ref{thm:torus}. In Section \ref{sec:linkingnumber0}, we discuss L--space links with vanishing linking numbers and describe the possible L--space surgery sets. In Section \ref{expdes}, we give explicit descriptions of L--space surgery sets for some 2-component L--space links, and prove Theorem \ref{thm:if and only}, Theorem \ref{thm:cable} and Corollary \ref{whitehead}.

\medskip
{\bf Notation and Conventions.}
In this paper, all 2-component links are oriented such that the linking number is nonnegative. 
We use  $l$ to  denote the linking number and  $\Lambda$ to denote the surgery matrix 
$$
\Lambda=
\begin{pmatrix}
d_{1} & l \\
l       & d_{2}
\end{pmatrix}.
$$
We use $\L$ to denote links in $S^{3}$ and $L_{1}, \cdots, L_{n}$ to denote the link components in the same link. We denote vectors in $\mathbb{R}^{n}$ by bold letters. For two vectors $\bm{u}=(u_{1}, \cdots, u_{n})$ and $\bm{v}=(v_{1}, \cdots, v_{n})$, we write $\bm{u}\succeq \bm{v}$ if $u_{i}\geq v_{i}$ for all $1\leq i \leq n$, and $\bm{u}\succ \bm{v}$ if $\bm{u}\succeq \bm{v}$ and $\bm{u}\neq \bm{v}$. Let $\bm{e}_{i}$ denote a vector in $\mathbb{R}^{n}$ where the $i$-th entry is $1$ and other entries are $0$. For a subset $B\subset \{1, \cdots, n\}$, let $\bm{e}_{B}=\sum_{i\in B} \bm{e}_{i}$.  Let $\Delta_{\L}(t_{1}, \cdots, t_{n})$ denote the symmetrized Alexander polynomial of $\L$. Throughout this paper, we work over the field $\F=\Z/2\Z$. 

\medskip
{\bf Acknowledgements.}
I deeply appreciate Eugene Gorsky for his support and useful  suggestions  during this project. I am grateful to Allison Moore for helpful discussions, and I also want to thank Jacob Rasmussen for pointing out that the Thurston polytope characterizes the torus link $T(2, 2l)$.  The project is partially supported by NSF grant DMS-1700814.

\section{The H-function and L--space links}

\subsection{The H-function}
\label{sec:the H-function}
\OS  and Szab\'o associated  chain complexes  $CF^{-}(M), \widehat{CF}(M)$ to an admissible Heegaard diagram for a closed oriented connected 3-manifold $M$ \cite{OS1}, and these give  three-manifold invariants $HF^{-}(M)$ and $\widehat{HF}(M)$. A nullhomologous link $\L=L_{1}\cup \cdots \cup L_{n}$ in $M$ defines a filtration on the chain complex $CF^{-}(M)$. For links in $S^{3}$, this filtration is indexed by an $n$-dimensional lattice $\H(\L)$ which is defined as follows:

\begin{definition}
For an oriented link $\L=L_{1}\cup \cdots \cup L_{n}\subset S^{3}$, define $\H(\L)$ to be the affine lattice over $\Z^{n}$:
$$\H(\L)=\bigoplus_{i=1}^{n} \H_{i}(\L), \quad \H_{i}(\L)=\Z+\dfrac{lk(L_{i}, \L\setminus L_{i})}{2}$$
where $lk(L_{i}, \L\setminus L_{i})$ denotes the linking number of $L_{i}$ and $\L\setminus L_{i}$. 
\end{definition}

Given $\bm{s}=(s_{1}, \cdots, s_{n})\in \H(\L)$, the \emph{generalized Heegaard Floer complex} $\A^{-}(\L, \bm{s})\subset CF^{-}(S^{3})$ is the $\F[[U]]$-module defined to be a subcomplex of $CF^{-}(S^{3})$ corresponding to the filtration indexed by $\bm{s}$ \cite{MO}. The link Floer homology $HFL^{-}(\L, \bm{s})$ is the homology of the associated complex with respect to this filtration, and is a module over $\F[[U]]$. For more details, see \cite{ BG, MO}. 

By the large surgery theorem \cite[Theorem 12.1]{MO}, the homology of $\A^{-}(\L, \bm{s})$ is isomorphic to the Heegaard Floer homology of a large surgery on the link $\L$ equipped with some Spin$^{c}$ structure as a $\F[[U]]$-module. Thus the homology of $\A^{-}(\L, \bm{s})$ is a direct sum  of one copy of $\F[[U]]$ and some $U$-torsion submodule. 

\begin{definition}\cite[Definition 3.9]{BG}
For an oriented link $\L\subset S^{3}$, we define the $H$-function $H_{\L}(\bm{s})$ by saying that  $-2H_{\L}(\bm{s})$ is the maximal homological degree of the free part of $H_{\ast}(\A^{-}(\L, \bm{s}))$ where $\bm{s}\in \H(\L)$.

\end{definition}

\begin{remark}
We sometimes write $H_{\L}(\bm{s})$ as $H(\bm{s})$ for simplicity if there is no confusion. 
\end{remark}

We list several properties of the $H$-function as follows.

\begin{lemma}\cite[Proposition 3.10]{BG}
\label{growth control}
For an oriented link $\L\subset S^{3}$, the $H$-function $H_{\L}(\bm{s})$ takes nonnegative values, and $H_{\L}(\bm{s}-\bm{e}_{i})=H_{\L}(\bm{s})$ or $H_{\L}(\bm{s}-\bm{e}_{i})=H_{\L}(\bm{s})+1$ where $\bm{s}\in \H(\L)$. 

\end{lemma}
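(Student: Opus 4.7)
The plan is to derive both claims from an analysis of graded $\F[U]$-linear maps between the complexes $\A^{-}(\L, \bm{s})$, $\A^{-}(\L, \bm{s} - \bm{e}_i)$, and $CF^{-}(S^{3})$. Throughout, the large surgery theorem ensures each $H_\ast(\A^{-}(\L, \bm{s}))$ has a single $\F[U]$-tower as its free part, so any graded $\F[U]$-linear map between the free parts of two such modules is multiplication by a single power of $U$ (or is zero).

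For the nonnegativity claim, I would use the inclusion of graded chain complexes $\iota: \A^{-}(\L, \bm{s}) \hookrightarrow CF^{-}(S^{3})$. On free parts the induced map $\F[U] \to \F[U]$ is non-zero, since $\A^{-}(\L, \bm{s}') = CF^{-}(S^{3})$ for $\bm{s}'$ componentwise large enough and $\iota$ factors through any such larger subcomplex. Hence on free parts $\iota_\ast$ is multiplication by $U^k$ for some $k \geq 0$. Matching the top degrees --- $0$ on the target and $-2 H_{\L}(\bm{s})$ on the source --- gives $H_{\L}(\bm{s}) = k \geq 0$.

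For the growth control, I would fix $i$ and compare $\A^{-}(\L, \bm{s} - \bm{e}_i)$ and $\A^{-}(\L, \bm{s})$ by two maps. The first is the tautological inclusion $\iota_i: \A^{-}(\L, \bm{s} - \bm{e}_i) \hookrightarrow \A^{-}(\L, \bm{s})$. The second is multiplication by $U$, which lands inside $\A^{-}(\L, \bm{s} - \bm{e}_i)$ because $\A^{-}(\L, \bm{s})$ is cut out of $CF^{-}(S^{3})$ by the Alexander multi-filtration being $\preceq \bm{s}$ and $U$ drops every Alexander coordinate by one; this yields a chain map $\phi_i: \A^{-}(\L, \bm{s}) \to \A^{-}(\L, \bm{s} - \bm{e}_i)$. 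By construction $\iota_i \circ \phi_i$ is just $U$-multiplication on $\A^{-}(\L, \bm{s})$. On free parts, $\iota_i$ and $\phi_i$ induce multiplication by $U^a$ and $U^b$ with $a, b \geq 0$, and neither can vanish since $U^{a+b} = U \neq 0$. Thus $a + b = 1$, forcing $(a, b) \in \{(0,1), (1,0)\}$, and the grading shift of $\iota_i$ then reads $H_{\L}(\bm{s} - \bm{e}_i) - H_{\L}(\bm{s}) = a \in \{0, 1\}$.

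The only step I expect to require genuine care is verifying that $U$-multiplication restricts to a chain map $\A^{-}(\L, \bm{s}) \to \A^{-}(\L, \bm{s} - \bm{e}_i)$ rather than merely into $\A^{-}(\L, \bm{s})$ itself; this is a direct consequence of the definition of $\A^{-}(\L, \bm{s})$ via the Alexander multi-filtration, but it is the single place where one must use the concrete subcomplex structure inside $CF^{-}(S^{3})$ rather than arguing abstractly. Once this point is settled, the remainder is formal manipulation of graded $\F[U]$-module maps between towers.
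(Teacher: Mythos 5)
The paper does not actually prove this lemma; it is quoted from \cite{BG}, and your argument is essentially the one given there (Proposition 3.10): compare $\A^{-}(\L,\bm{s}-\bm{e}_{i})$ and $\A^{-}(\L,\bm{s})$ via the tautological inclusion and via $U$-multiplication, note that their composite induces multiplication by $U$, and use that any graded $\F[U]$-map between the two towers is multiplication by a single power of $U$. Your second paragraph is correct and complete, including the key verification that $U$ drops every Alexander coordinate and hence carries $\A^{-}(\L,\bm{s})$ into $\A^{-}(\L,\bm{s}-\bm{e}_{i})$. The one soft spot is the justification of nonnegativity: saying that $\iota$ ``factors through'' $\A^{-}(\L,\bm{s}')=CF^{-}(S^{3})$ for $\bm{s}'$ large is vacuous as stated, since the second map in that factorization is the identity, so it does not by itself show that $\iota_{\ast}$ is nonzero on free parts. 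The repair is the same two-map trick: for $M$ large one has $U^{M}\cdot CF^{-}(S^{3})\subset \A^{-}(\L,\bm{s})$, so the composite $CF^{-}(S^{3})\to\A^{-}(\L,\bm{s})\xrightarrow{\iota} CF^{-}(S^{3})$ induces $U^{M}$ on homology, which is injective on the tower, forcing $\iota_{\ast}\neq 0$ on free parts; alternatively, prove the growth-control statement first and write $\iota$ as a composite of single-step inclusions, each inducing $U^{a}$ with $a\in\{0,1\}$, so that the degree count gives $H_{\L}(\bm{s})=\sum a\geq 0$. With that reordering or substitution the proof is complete.
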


\begin{lemma}\cite[Proposition 3.12]{BG}
\label{infinity}
For an oriented link $\L=L_{1}\cup \cdots \cup L_{n}\subset S^{3}$ and $\bm{s}=(s_{1}, \cdots, s_{n})\in \H(\L)$, 
$$H_{\L}(s_{1},\cdots, s_{n-1}, \infty)=H_{\L\setminus L_{n}}(s_{1}-lk(L_{1}, L_{n})/2, \cdots, s_{n-1}-lk(L_{n-1}, L_{n})/2 )$$
where $lk(L_{i}, L_{n})$ denotes the linking number of $L_{i}$ and $L_{n}$ for $i=1, 2, \cdots, n-1$.
\end{lemma}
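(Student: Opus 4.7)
The plan is to unpack the definition of $\A^{-}(\L, \bm{s})$ and show that it stabilizes, as $s_{n} \to \infty$, to the corresponding complex for the sublink $\L \setminus L_{n}$ with an appropriately shifted filtration index. Recall that $\A^{-}(\L, \bm{s})$ is realized inside $CF^{-}(S^{3})$ as the subcomplex generated by intersection points $\bm{x}$ whose Alexander gradings $A_{i}(\bm{x})$ coming from the basepoints on $L_{i}$ satisfy coordinate-wise bounds determined by $\bm{s}$. When $s_{n} = \infty$, the $n$-th inequality is vacuous, so the stabilized complex is the one cut out only by the first $n-1$ inequalities on a multi-pointed Heegaard diagram for $\L$.

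The next step is to identify this stabilized complex with $\A^{-}(\L \setminus L_{n}, \bm{s}')$ for $\bm{s}' = (s_{1} - lk(L_{1}, L_{n})/2, \ldots, s_{n-1} - lk(L_{n-1}, L_{n})/2)$. To do this I would pass from a multi-pointed Heegaard diagram for $\L$ to one for $\L \setminus L_{n}$ by filling in (i.e.\ forgetting) the basepoints on $L_{n}$. The generating intersection points are unchanged, but the Alexander grading associated to $L_{i}$ transforms by the standard formula $A_{i}^{\L \setminus L_{n}} = A_{i}^{\L} + lk(L_{i}, L_{n})/2$. Rewriting the inequality $A_{i}^{\L}(\bm{x}) \le s_{i}$ in terms of $A_{i}^{\L \setminus L_{n}}$ therefore produces exactly the shifted bound $A_{i}^{\L \setminus L_{n}}(\bm{x}) \le s_{i} - lk(L_{i}, L_{n})/2$, which gives the claimed identification of subcomplexes.

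Once the complexes are identified as $\F[[U]]$-modules, their homologies agree, and in particular their free parts have the same maximal homological degree. The statement then follows immediately from the definition of the $H$-function as $-1/2$ times this maximal degree. The step I expect to be the main obstacle is pinning down the correct sign of the shift: oriented link Floer homology involves half-integer Alexander gradings whose conventions vary between \cite{MO} and \cite{BG}, so the identification $A_{i}^{\L \setminus L_{n}} = A_{i}^{\L} + lk(L_{i}, L_{n})/2$ needs to be verified either by a direct computation on a standard multi-pointed Heegaard diagram or by comparing with the relative Alexander grading formula used in \cite{BG}, to make sure the shift appears as $-lk(L_{i}, L_{n})/2$ and not with the opposite sign.
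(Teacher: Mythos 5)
The paper gives no proof of this lemma --- it is quoted from \cite[Proposition 3.12]{BG} --- and the argument you outline is essentially the one behind that reference: for $s_{n}=\infty$ the $n$-th filtration condition is vacuous, and forgetting the basepoints on $L_{n}$ identifies the resulting complex with $\A^{-}(\L\setminus L_{n}, \psi^{L_{n}}(\bm{s}))$, where $\psi^{L_{n}}$ is exactly the Manolescu--Ozsv\'ath shift $s_{i}\mapsto s_{i}-lk(L_{i},L_{n})/2$ recalled in Section 3.1 of the paper. So your route is the intended one, but two points need tightening. First, your grading formula is internally inconsistent with the conclusion you draw from it: if $A_{i}^{\L\setminus L_{n}}=A_{i}^{\L}+lk(L_{i},L_{n})/2$, then $A_{i}^{\L}(\bm{x})\leq s_{i}$ rewrites as $A_{i}^{\L\setminus L_{n}}(\bm{x})\leq s_{i}+lk(L_{i},L_{n})/2$, with the wrong sign. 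The correct identity is $A_{i}^{\L\setminus L_{n}}=A_{i}^{\L}-lk(L_{i},L_{n})/2$ (this is forced by requiring the grading of each sublink to be symmetric about the origin, since dropping $L_{n}$ removes $lk(L_{i},L_{n})$ from the total span of the $i$-th Alexander grading); you correctly flagged the sign as the delicate point, but as written the deduction does not follow from the formula you state. Second, in the Manolescu--Ozsv\'ath framework the passage from $\A^{-}(\mathcal{H}^{\L},(s_{1},\dots,s_{n-1},\infty))$ to $\A^{-}(\mathcal{H}^{\L-L_{n}},\psi^{L_{n}}(\bm{s}))$ is not a literal equality of subcomplexes of $CF^{-}(S^{3})$: the generators agree, but the map is a destabilization (the map $\Phi^{L_{n}}$ of Lemma \ref{isomorphism}) which is only a homotopy equivalence. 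Since the $H$-function depends only on the homology, this suffices for the lemma, but the identification should be asserted at the level of homotopy type rather than as an equality of filtered subcomplexes.
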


\begin{remark}
We use the convention that $H_{\L}(\infty, \cdots, \infty)=0$.
\end{remark}

\subsection{L--space links}
\label{sec:L-space links}
In \cite{OS05}, \OS and Szab\'o introduced the concept of L--spaces. 

\begin{definition}
A 3-manifold $Y$ is an L--space if it is a rational homology sphere and its Heegaard Floer homology has minimal possible rank: for any Spin$^{c}$-structure $\s$, $\widehat{HF}(Y, \s)=\F$ and $HF^{-}(Y, \s)$ is a free $\F[U]$-module of rank 1. 
\end{definition}

\begin{definition}\cite{GN15, Liu}
An  $n$-component link $\L\subset S^{3}$ is an L--space link if there exists $\bm{0}\prec \bm{p}\in \Z^{n}$ such that the surgery manifold $S_{\bm{q}}(\L)$ is an L--space for any $\bm{q}\succeq \bm{p}$.
\end{definition}

The following properties of L--space links will be used in this paper. 

\begin{theorem}\cite{Liu}
\label{L-space link pro}
(a) Every sublink of an L--space link is an L--space link. 

(b) A link is an L-space link if for all $\bm{s}$ one has $H_{\ast}(\A^{-}(\L, \bm{s}))=\F[[U]]$. 
\end{theorem}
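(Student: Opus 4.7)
The plan is to prove (b) as stated and then deduce (a) from it, making use of both directions of the equivalence between ``$\L$ is an L--space link'' and ``$H_*(\A^-(\L, \bm{s})) = \F[[U]]$ for every $\bm{s}$''. Part (b) supplies one direction, while the converse is an essentially immediate companion that I will need for part (a).

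For (b), I would appeal directly to the large surgery theorem (\cite[Theorem 12.1]{MO}, recalled in Section \ref{sec:the H-function}): for $\bm{q}$ sufficiently positive, the Spin$^c$ structures on $S^3_{\bm{q}}(\L)$ are parametrized via a quotient of $\H(\L)$, and for each $\bm{s}$ there is an identification $H_*(\A^-(\L, \bm{s})) \cong HF^-(S^3_{\bm{q}}(\L), \s_{\bm{s}})$ as $\F[[U]]$--modules. If every $H_*(\A^-(\L, \bm{s})) = \F[[U]]$, then $HF^-(S^3_{\bm{q}}(\L), \s) = \F[[U]]$ in every Spin$^c$ structure $\s$, so $\widehat{HF}(S^3_{\bm{q}}(\L))$ has total rank equal to the number of Spin$^c$ structures, i.e.\ $|H_1(S^3_{\bm{q}}(\L))|$. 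This is precisely the L--space condition, so $\L$ is an L--space link. The converse comes from running the argument in reverse: if $\L$ is an L--space link then for large $\bm{q}$ each $HF^-(S^3_{\bm{q}}(\L), \s)$ is $\F[[U]]$, which forces $H_*(\A^-(\L, \bm{s})) = \F[[U]]$ for every $\bm{s}$.

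For (a), I would induct on the number of components, so it suffices to show that if $\L = L_1 \cup \cdots \cup L_n$ is an L--space link then so is the sublink $\L \setminus L_n$. By the converse direction just established, $H_*(\A^-(\L, \bm{s})) = \F[[U]]$ for every $\bm{s} \in \H(\L)$. The key step is a chain--level refinement of Lemma \ref{infinity}: the complex $\A^-(\L, s_1, \ldots, s_{n-1}, s_n)$ becomes independent of $s_n$ once $s_n$ is sufficiently large, and the stable value is identified with $\A^-(\L \setminus L_n, s_1 - lk(L_1, L_n)/2, \ldots, s_{n-1} - lk(L_{n-1}, L_n)/2)$. Granting this, $H_*(\A^-(\L \setminus L_n, \bm{s}')) = \F[[U]]$ for every $\bm{s}'$, and applying (b) to $\L \setminus L_n$ yields that the sublink is an L--space link.

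The main obstacle is the chain--level stabilization in (a): one must verify that the natural inclusions $\A^-(\L, \bm{s}) \hookrightarrow \A^-(\L, \bm{s} + k \bm{e}_n)$ stabilize and that the stable complex is canonically identified with the generalized Floer complex of the sublink $\L \setminus L_n$ at the shifted argument. This is a somewhat technical but routine check based on how the link Floer filtration on $CF^-(S^3)$ behaves when the filtration associated to $L_n$ is forgotten. Once this identification is in place, both parts of the theorem reduce to direct applications of the large surgery theorem, and no more elaborate machinery (such as Spin$^c$--decomposed surgery exact triangles) is required.
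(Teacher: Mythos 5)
The paper does not actually prove this theorem---it is quoted from \cite{Liu}---and your argument is a correct reconstruction of the standard proof there: part (b) follows from the large surgery theorem exactly as you describe, and part (a) follows by stabilizing $\A^{-}(\L, \bm{s})$ in the $s_{n}$-direction and applying (b) to the sublink. The chain-level stabilization you flag as the main remaining obstacle is precisely Lemma \ref{isomorphism} (Manolescu--Ozsv\'ath's Lemma 10.1), already quoted in Section \ref{sec:truncation}, so no further work is needed there.
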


\begin{lemma}\cite[Lemma 2.5]{Liu}
\label{induction}
Let $\L=L_{1}\cup \cdots \cup L_{n}$ be a link with n components, and $\L'=\L-L_{1}$. Let $\Lambda$ be the framing matrix of $\L$ for the surgery $S^{3}_{d_{1}, \cdots, d_{n}}(\L)$, and denote by $\Lambda'$ the restriction of $\Lambda$ on $\L'$. Suppose $S^{3}_{d_{1}, \cdots, d_{n}}(\L)$ and $S^{3}_{d_{2}, \cdots, d_{n}}(\L')$ are both L--spaces. Then, 
\begin{enumerate}
\item Case I: if $\det(\Lambda)\cdot \det(\Lambda')>0$, then for all $k>0$, $S^{3}_{d_{1}+k, d_{2}, \cdots, d_{n}}(\L)$ is an L-space;

\item Case II: if $\det(\Lambda)\cdot \det(\Lambda')<0$, then for all $k>0$, $S^{3}_{d_{1}-k, d_{2}, \cdots, d_{n}}(\L)$ is an L--space.

\end{enumerate}

\end{lemma}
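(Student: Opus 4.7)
The plan is to treat $L_{1}$ as a framed knot inside $Y := S^{3}_{d_{2},\ldots,d_{n}}(\L')$, apply the Ozsv\'ath--Szab\'o surgery exact triangle to the triple
$$\bigl(Y,\ S^{3}_{d_{1},d_{2},\ldots,d_{n}}(\L),\ S^{3}_{d_{1}+1, d_{2},\ldots,d_{n}}(\L)\bigr)$$
(or, symmetrically, its $d_{1}-1$ analogue), and then iterate the resulting base step in $k$. Three ingredients are needed: an arithmetic identity for $\det(\Lambda)$, the total-rank inequality coming from the triangle, and the universal bound $\mathrm{rank}\,\widehat{HF}(M)\geq |H_{1}(M;\Z)|$ for any rational homology sphere $M$, which is an equality precisely for L--spaces.

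The first ingredient is straightforward. Cofactor expansion of $\det(\Lambda)$ along its first row shows that replacing $d_{1}$ by $d_{1}+t$ changes the determinant by exactly $t\det(\Lambda')$, so
$$\bigl|H_{1}\bigl(S^{3}_{d_{1}\pm 1, d_{2},\ldots,d_{n}}(\L)\bigr)\bigr| \;=\; \bigl|\det(\Lambda)\pm\det(\Lambda')\bigr|,$$
which equals $|\det(\Lambda)|+|\det(\Lambda')|$ with the $+$ sign in Case I and with the $-$ sign in Case II. The second ingredient is the rank inequality
$$\mathrm{rank}\,\widehat{HF}\bigl(S^{3}_{d_{1}\pm 1, d_{2},\ldots,d_{n}}(\L)\bigr) \;\leq\; \mathrm{rank}\,\widehat{HF}\bigl(S^{3}_{d_{1},d_{2},\ldots,d_{n}}(\L)\bigr) \;+\; \mathrm{rank}\,\widehat{HF}(Y)$$
obtained by reading the exact triangle in the appropriate cyclic order. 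Under the L--space hypotheses the right side equals $|\det(\Lambda)|+|\det(\Lambda')|$, and by the arithmetic identity this matches $|H_{1}(S^{3}_{d_{1}\pm 1, d_{2},\ldots,d_{n}}(\L))|$ in the case at hand. Combined with the universal lower bound, equality is forced and $S^{3}_{d_{1}\pm 1, d_{2},\ldots,d_{n}}(\L)$ is an L--space; per-Spin$^{c}$ L--space-hood is automatic once total-rank equality holds, since each Spin$^{c}$ summand contributes rank at least one.

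To run the induction, I would verify that the hypothesis of each case is preserved under the relevant unit shift in $d_{1}$. In Case I, $\det(\Lambda_{d_{1}+1}) = \det(\Lambda) + \det(\Lambda')$ retains the sign of $\det(\Lambda')$, so $\det(\Lambda_{d_{1}+1})\cdot\det(\Lambda')>0$ and the base step can be reapplied to the triple $(d_{1}+1, d_{2},\ldots,d_{n})$; iterating yields the claim for every $k>0$. Case II is symmetric: $\det(\Lambda_{d_{1}-1}) = \det(\Lambda) - \det(\Lambda')$ flips to the sign opposite to $\det(\Lambda')$, so $\det(\Lambda_{d_{1}-1})\cdot\det(\Lambda')<0$ persists. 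The main obstacle I anticipate is purely organizational: picking the correct cyclic rotation of the exact triangle so that $Y$ (the result of $\mu$-surgery on $L_{1}$) plays the role of the ``trivial'' vertex, and thereby aligning the sign of the framing shift with the sign of $\det(\Lambda)\cdot\det(\Lambda')$. Once this is set up, the argument reduces to a clean three-term interplay between the exact triangle and the determinant identity.
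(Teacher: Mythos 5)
Your argument is correct and is essentially the standard proof of this lemma: the paper itself imports it from \cite[Lemma 2.5]{Liu} without proof, and Liu's argument there is exactly this combination of the surgery exact triangle for $L_{1}\subset S^{3}_{d_{2},\ldots,d_{n}}(\L')$, the identity $\det(\Lambda_{d_{1}\pm 1})=\det(\Lambda)\pm\det(\Lambda')$, and the bound $\mathrm{rank}\,\widehat{HF}(M)\geq |H_{1}(M;\Z)|$. (Your one worry is moot: exactness gives each of the three terms rank at most the sum of the other two, so no choice of cyclic rotation is needed.)
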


For L--space links, the $H$-function can be computed from the Alexander polynomial. One can write \cite{BG}:
\begin{equation}
\label{comh}
\chi(HFL^{-}(\L, \bm{s}))=\sum_{B\subset \{1, \cdots, n\}} (-1)^{|B|-1}H_{\L}(\bm{s}-\bm{e}_{B}). 
\end{equation}

The Euler characteristic $\chi(HFL^{-}(\L, \bm{s}))$ was computed in \cite{OS08}, 
\begin{equation}
\label{Alex}
\tilde{\Delta}(t_{1}, \cdots, t_{n})=\sum_{\bm{s}\in \H(\L)} \chi(HFL^{-}(\L, \bm{s}))t_{1}^{s_{1}}\cdots t_{n}^{s_{n}}
\end{equation}
where $\bm{s}=(s_{1}, \cdots, s_{n})$ and 
$$
\tilde{\Delta}_{\L}(t_{1}, \cdots, t_{n}): = \left\{
        \begin{array}{ll}
           (t_{1}\cdots t_{n})^{1/2} \Delta_{\L}(t_{1}, \cdots, t_{n}) & \quad \textup{if } n >1, \\
            \Delta_{\L}(t)/(1-t^{-1}) & \quad  \textup{if } n=1. 
        \end{array}
    \right. 
$$
Note that we regard $\dfrac{1}{1-t^{-1}}$ as an infinite power series in $t^{-1}$. The Alexander polynomial $\Delta_{\L}(t_{1}, \cdots, t_{n})$ is normalized so that it is symmetric about the origin. 

One can use \eqref{comh} to compute the $H$-function of $\L$ by using the values of the $H$-function for sublinks as the boundary conditions. In this paper, we mainly consider links with one and two components. 

For $n=1$, the equation $\eqref{comh}$ has the form:
\begin{equation}
\label{diff}
\chi(HFL^{-}(\L, s))=H(s-1)-H(s).
\end{equation}
It is not hard to see that if $\L$ is an unknot, $H(s)=\dfrac{s-|s|}{2}$. 

The genus of a knot $\L$ is defined as:
$$g(\L)=\min \{\textup{genus}(F)\mid F\subset S^{3} \textup{ is an oriented, embedded surface with } \partial F=\L \}.$$

\begin{lemma}
\label{genus0}
Let $\L$ be an L--space knot. Then $H(s)=0$ if and only if $s\geq g(\L)$ where  $s\in \H(\L)\cong \Z$. 

\end{lemma}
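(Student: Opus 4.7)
The plan is to obtain an explicit formula for $H(s)$ from the Alexander polynomial using \eqref{diff} and \eqref{Alex}, and then to plug in the known shape of $\Delta_{\L}(t)$ for an L--space knot.

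First I would telescope the recursion $H(s-1)-H(s)=\chi(HFL^{-}(\L,s))$ upwards from $s$. By Lemma \ref{growth control}, $H$ is a non-negative, non-increasing function of $s$, so its values stabilize to $H(\infty)=0$, yielding
$$H(s)=\sum_{j>s}\chi(HFL^{-}(\L,j)).$$
By \eqref{Alex}, $\chi(HFL^{-}(\L,j))$ is the coefficient of $t^{j}$ in $\tilde{\Delta}_{\L}(t)=\Delta_{\L}(t)/(1-t^{-1})$, expanded as a formal series in $t^{-1}$. Writing $\Delta_{\L}(t)=\sum_{i} a_{i}t^{i}$ and using $1/(1-t^{-1})=\sum_{k\ge 0}t^{-k}$, a direct multiplication gives $\chi(HFL^{-}(\L,j))=\sum_{i\ge j} a_{i}$. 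Since $\Delta_{\L}$ is supported in $|i|\le g(\L)$, this vanishes for $j>g(\L)$, hence $H(s)=0$ for every $s\ge g(\L)$, which settles the ``if'' direction.

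For the converse, the same formula gives $H(g(\L)-1)=\chi(HFL^{-}(\L,g(\L)))=a_{g(\L)}$, the top coefficient of $\Delta_{\L}$. For an L--space knot, the \OS classification of Alexander polynomials implies $\Delta_{\L}(t)=\sum_{k=0}^{2m}(-1)^{k} t^{n_{k}-g(\L)}$ with $n_{0}=0$ and $n_{2m}=2g(\L)$, so $a_{g(\L)}=1$. Equivalently, $\widehat{HFK}$ detects the Seifert genus, so $\widehat{HFK}(\L,g(\L))\ne 0$; the L--space-knot hypothesis forces every nonzero $\widehat{HFK}(\L,s)$ to have rank one, so again $a_{g(\L)}\ne 0$. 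Hence $H(g(\L)-1)\ge 1$, and non-increasingness from Lemma \ref{growth control} propagates this to $H(s)\ge 1$ for all $s<g(\L)$, finishing the proof.

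The main obstacle is the external input $a_{g(\L)}=1$: one must invoke either the \OS classification of Alexander polynomials of L--space knots, or the genus-detection of $\widehat{HFK}$ combined with the rank bound implied by the L--space-knot hypothesis. The remainder is the elementary manipulation of the formal power series $\Delta_{\L}(t)/(1-t^{-1})$ via \eqref{diff} and \eqref{Alex}, together with the monotonicity afforded by Lemma \ref{growth control}.
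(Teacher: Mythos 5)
Your proof is correct and follows essentially the same route as the paper: both express $H$ via the coefficients of $\Delta_{\L}(t)/(1-t^{-1})$ using \eqref{diff}, identify $g(\L)$ with the top degree of $\Delta_{\L}$ via genus detection of $\widehat{HFK}$ for L--space knots, and then conclude with the monotonicity of Lemma \ref{growth control} and the boundary condition $H(\infty)=0$. Your explicit remark that the nonvanishing of $a_{g(\L)}$ needs the rank-one property (or the Ozsv\'ath--Szab\'o classification) makes a step the paper leaves implicit slightly more precise, but the argument is the same.
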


\begin{proof}
Let $\Delta_{\L}(t)=\sum_{s\in \Z} a_{s} t^{s}$ denote its symmetrized Alexander polynomial. We claim that  $g(\L)=\max\{ s \mid a_{s}\neq 0\}$. Recall that 
$$\Delta_{\L}(t)=\sum_{s\in \Z}\chi(\widehat{HFK}(\L, s))\cdot t^{s}$$
where $\widehat{HFK}(\L, s)$ is a knot invariant from the Heegaard Floer package \cite{OS04b}, and  \cite[Theorem 1.2]{OS3}
$$g(\L)=\max\{ s\mid \widehat{HFK}(\L, s)\neq 0\}.$$
Hence, $g(\L)$ is the top degree of $t$ in $\Delta_{\L}(t)$. Observe that the top degrees of $t$ in $\Delta_{\L}(t)$ and $\tilde{\Delta}_{\L}(t)$ are the same. By \eqref{diff}:
$$H(s-1)-H(s)=0, \textup{ for all } s>g, \quad H(g-1)-H(g)=a_{g}\neq 0.$$
By Lemma \ref{growth control} and the boundary condition $H(\infty)=0$, we have 
$$H(s)=0 \textup{ for all } s\geq g, \textup{ and } H(s)\geq 1 \textup{ for all } s\leq g-1.$$

\end{proof}

For  2-component L--space links $\L=L_{1}\cup L_{2}$, \eqref{comh} has the form, 
\begin{equation}
\label{computeh}
\chi(HFL^{-}(\L, \bm{s}))=-H(s_{1}-1, s_{2}-1)+H(s_{1}-1, s_{2})+H(s_{1}, s_{2}-1)-H(s_{1}, s_{2}),
\end{equation}
and we have $H(s_{1}, \infty)=H_{1}(s_{1}-l/2)$ and $H(\infty, s_{2})=H_{2}(s_{2}-l/2)$ where $H_{1}, H_{2}$ denote the $H$-functions of $L_{1}, L_{2}$ respectively, and $l$ is the linking number.

For general L--space links $\L$, the $H$-function  satisfies the following conjugation symmetry. 

\begin{lemma}\cite[Lemma 5.5]{Liu}
\label{symmetry2}
For an oriented $n$-component L--space link $\L\subset S^{3}$, the $H$-function satisfies $H_{\L}(-\bm{s})=H_{\L}(\bm{s})+\sum^{n}_{i=1} s_{i}$ where $\bm{s}=(s_{1}, \cdots, s_{n})\in \H(\L)$. 

\end{lemma}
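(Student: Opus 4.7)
The plan is induction on the number of components $n$, with the key algebraic input being the symmetry of the multivariable Alexander polynomial under $\bm{t}\mapsto\bm{t}^{-1}$. Since $\L$ is an L--space link, Theorem~\ref{L-space link pro}(b) gives $H_{\ast}(\A^-(\L,\bm{s}))\cong\F[[U]]$, so the $H$--function is recovered from the Euler characteristics $\chi(HFL^-(\L,\bm{s}))$ via the M\"obius relation \eqref{comh}, and the latter are read off from $\tilde\Delta_\L$ via \eqref{Alex}. For the base case $n=1$, write $\Delta_K(t)=\sum_k a_k t^k$; the knot symmetry forces $a_k=a_{-k}$, and $\Delta_K(1)=1$ together with $\Delta_K'(1)=0$ do the rest. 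Telescoping \eqref{diff} against $H_K(\infty)=0$ produces the closed form $H_K(s)=\sum_{k>s}(k-s)a_k$, and a direct rearrangement gives $H_K(-s)-H_K(s)=s\sum_k a_k-\sum_k k a_k=s$.

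For the inductive step ($n\geq 2$), assume the symmetry for all proper sublinks of $\L$ and set
$$\phi(\bm{s}):=H_\L(-\bm{s})-H_\L(\bm{s})-\sum_{i=1}^n s_i.$$
I first verify that $\phi$ satisfies the \emph{homogeneous} M\"obius recursion $\sum_B(-1)^{|B|-1}\phi(\bm{s}-\bm{e}_B)=0$. The term $-H_\L(\bm{s})$ contributes $-\chi(HFL^-(\L,\bm{s}))$ by \eqref{comh}. For $H_\L(-\bm{s})$, substituting $B\mapsto B^c$ and noting $(-1)^{|B|-1}=(-1)^{n-|B^c|-1}$ rewrites the sum as $(-1)^n\chi(HFL^-(\L,\bm{1}-\bm{s}))$. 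The contribution of $-\sum_i s_i$ vanishes because $\sum_B(-1)^{|B|-1}=0$ and (for $n\geq 2$) $\sum_B(-1)^{|B|-1}|B|=0$ as well. The remaining expression $(-1)^n\chi(\bm{1}-\bm{s})-\chi(\bm{s})$ is zero by the identity $\chi(\bm{s})=(-1)^n\chi(\bm{1}-\bm{s})$, which follows from \eqref{Alex} together with the conjugation symmetry $\Delta_\L(\bm{t}^{-1})=(-1)^n\Delta_\L(\bm{t})$ of the symmetric multivariable Alexander polynomial for $n\geq 2$.

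Next I check that $\phi$ vanishes on the boundary at infinity in each coordinate. Lemma~\ref{infinity} identifies $H_\L(s_1,\ldots,s_{n-1},\infty)$ with the shifted sublink value $H_{\L\setminus L_n}(s_1-lk(L_1,L_n)/2,\ldots,s_{n-1}-lk(L_{n-1},L_n)/2)$, and analogously for any coordinate. For the $H_\L(-\bm{s})$ piece, the growth control of Lemma~\ref{growth control} forces $H_\L(\bm{s})+s_i$ to be non-decreasing in $s_i$, hence to stabilize as $s_i\to-\infty$; identifying this limit with a value of $H_{\L\setminus L_i}$ (again via Lemma~\ref{infinity} together with the inductive hypothesis for $\L\setminus L_i$) yields the desired vanishing of $\phi$ at infinity. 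Once $\phi$ satisfies the homogeneous M\"obius recursion and vanishes on every boundary at infinity, $\phi\equiv 0$ follows by descending induction on $\bm{s}$ from infinity.

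The principal obstacle is the boundary step: $H_\L(\bm{s})$ itself diverges as $s_i\to-\infty$, so one must carefully isolate the linear growth rate ($-s_i$) and match the residual with the conjugated sublink value, bookkeeping the half-integer shifts between $\H(\L)$ and $\H(\L\setminus L_i)$ dictated by the linking numbers. A secondary subtlety is to track the sign $(-1)^n$ correctly: in \eqref{Alex} the factor $(t_1\cdots t_n)^{1/2}$ combines with $\Delta_\L(\bm{t}^{-1})=(-1)^n\Delta_\L(\bm{t})$ to produce $\tilde\Delta_\L(\bm{t}^{-1})=(-1)^n(t_1\cdots t_n)^{-1}\tilde\Delta_\L(\bm{t})$, and it is precisely this factor that drives the M\"obius cancellation in the first step.
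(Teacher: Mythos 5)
First, note that the paper does not prove this statement at all: it is quoted as \cite[Lemma 5.5]{Liu}, and the proof there goes through the large surgery theorem, identifying $H_{\L}(\bm{s})$ and $H_{\L}(-\bm{s})$ with gradings in conjugate Spin$^{c}$ structures on a large surgery and invoking the symmetry $HF^{-}(Y,\s)\cong HF^{-}(Y,\bar{\s})$. Your route via M\"obius inversion of \eqref{comh} against the conjugation symmetry of the Alexander polynomial is genuinely different, and most of it checks out: the base case computation $H_K(s)=\sum_{k>s}(k-s)a_k$ is correct, and the verification that $\phi(\bm{s})=H_\L(-\bm{s})-H_\L(\bm{s})-\sum_i s_i$ satisfies the homogeneous recursion is correct, including the sign bookkeeping $\chi(\bm{s})=(-1)^n\chi(\bm{1}-\bm{s})$ and the vanishing of $\sum_B(-1)^{|B|-1}$ and $\sum_B(-1)^{|B|-1}|B|$ for $n\ge 2$.

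The gap is exactly at the point you flag as the ``principal obstacle,'' and it is not closed by the tools you cite. To conclude $\phi\equiv 0$ from the homogeneous recursion you need $\phi(\bm{s})\to 0$ as $s_i\to+\infty$, which requires the asymptotics of $H_\L(\bm{u})$ as $u_i\to-\infty$. Lemma \ref{infinity} only controls the limit at $+\infty$. Lemma \ref{growth control} does show that $H_\L(\bm{u})+u_i$ is non-decreasing in $u_i$, but a non-decreasing integer-valued function need not stabilize as its argument tends to $-\infty$ unless it is bounded below, and $H_\L(\bm{u})+u_i\ge u_i$ gives no such bound; the inductive hypothesis for $\L\setminus L_i$ says nothing about $H_\L$ in this regime. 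What you actually need is the negative-infinity analogue of Lemma \ref{infinity}, namely $\lim_{u_i\to-\infty}\bigl(H_\L(\bm{u})+u_i\bigr)=H_{\L\setminus L_i}(u_1+lk(L_1,L_i)/2,\cdots)+\mathrm{const}$, which comes from \cite[Lemma 10.1]{MO} together with a grading-shift computation (it is, in effect, Lemma \ref{infinity} for the link with $L_i$ reversed) --- and is morally the one-variable case of the very symmetry being proven. If you import that statement explicitly the argument closes; as written, the boundary step is asserted rather than derived, and this is where the real content of the lemma lives.
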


\begin{corollary}
\label{symmetry3}
For an oriented 2-component link $\L=L_{1}\cup L_{2}\subset S^{3}$ with linking number $l$, one has
$$H(-s_{1}, -s_{2})-H(\infty, -s_{2})=H(s_{1}, s_{2})-H(\infty, s_{2}+l)+s_{1}-l/2.$$

\end{corollary}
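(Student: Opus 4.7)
The plan is to derive the identity by chaining the two symmetries already available: the global conjugation symmetry for the link $\L$ (Lemma \ref{symmetry2}) and the stabilization-at-infinity property (Lemma \ref{infinity}), applied once to $\L$ and once to the sublink $L_2$, which is again an L--space link by Theorem \ref{L-space link pro}(a).

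First, I would apply Lemma \ref{symmetry2} directly to the 2-component L--space link $\L$ to rewrite the left-hand term
\[
H(-s_1,-s_2) \;=\; H(s_1,s_2) + s_1 + s_2.
\]
Next, I would handle the two ``infinity'' terms by reducing them to the $H$-function of $L_2$. By Lemma \ref{infinity},
\[
H(\infty, s_2+l) = H_{L_2}(s_2 + l - l/2) = H_{L_2}(s_2 + l/2),
\]
\[
H(\infty, -s_2) = H_{L_2}(-s_2 - l/2) = H_{L_2}\bigl(-(s_2+l/2)\bigr).
\]
Since $L_2$ is a 1-component L--space link, Lemma \ref{symmetry2} (with $n=1$) gives $H_{L_2}(-x) = H_{L_2}(x) + x$. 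Setting $x = s_2 + l/2$ yields
\[
H(\infty, -s_2) \;=\; H(\infty, s_2+l) + s_2 + l/2.
\]

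Finally, I would subtract the two displays and simplify:
\[
H(-s_1,-s_2) - H(\infty, -s_2)
= \bigl[H(s_1,s_2) + s_1 + s_2\bigr] - \bigl[H(\infty, s_2+l) + s_2 + l/2\bigr]
= H(s_1,s_2) - H(\infty, s_2+l) + s_1 - l/2,
\]
which is precisely the claimed identity. There is no real obstacle here; the only thing to be careful about is that Lemma \ref{symmetry2} requires the relevant link to be an L--space link, which is guaranteed for both $\L$ and $L_2$ by the sublink stability in Theorem \ref{L-space link pro}(a), and that the conventions $\H_i(\L) = \Z + l/2$ and the shift by $l/2$ in Lemma \ref{infinity} line up so that the $l/2$ contributions from the two uses of conjugation symmetry cancel the $s_2$ contribution in the expected way.
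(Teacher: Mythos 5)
Your proposal is correct and follows essentially the same route as the paper: apply Lemma \ref{symmetry2} to $\L$ to rewrite $H(-s_1,-s_2)$, use Lemma \ref{infinity} to convert the infinity terms into values of $H_{L_2}$, and then apply the one-component case of Lemma \ref{symmetry2} to $L_2$ so that the $s_2$ and $l/2$ contributions cancel as claimed. The only cosmetic difference is that you make explicit the (implicit in the paper) appeal to sublink stability for $L_2$.
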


\begin{proof}
By Lemma \ref{infinity} and Lemma \ref{symmetry2}, we have:
\begin{eqnarray*}
	H(-s_{1}, -s_{2})-H(\infty, -s_{2}) &=& H(s_{1}, s_{2})+s_{1}+s_{2}-H_{2}(-s_{2}-l/2)\\
	&=& H(s_{1}, s_{2})+s_{1}+s_{2}-H_{2}(s_{2}+l/2)-s_{2}-l/2 \\
	&=& H(s_{1}, s_{2})-H(\infty, s_{2}+l)+s_{1}-l/2. 
\end{eqnarray*}
\end{proof}

The surgeries on the  link $\L$ do not depend on its orientation, so whether a link $\L$ is an L--space link does not depend on orientations. However, the $H$-function of $\L$ depends on its orientation.
\begin{proposition}
 Let $\L=L_{1}\cup L_{2}$ be an oriented 2-component L--space link with linking number $l$, and $\L'=-L_{1}\cup L_{2}$ be the link obtained from $\L$ by reversing the orientation of $L_{1}$. Then for any $(s_{1}, s_{2})\in \H(\L')$
 $$H_{\L'}(s_{1}, s_{2})=H_{\L}(-s_{1}, s_{2})-s_{1}-l/2.$$
 \end{proposition}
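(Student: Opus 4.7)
I would prove the identity by showing that $F(s_1,s_2):=H_\L(-s_1,s_2)-s_1-l/2$ and $H_{\L'}(s_1,s_2)$ satisfy the same four-term finite-difference recursion coming from~\eqref{computeh} together with the same boundary behaviour as $s_2\to\infty$. Since Lemma~\ref{growth control} forces the $H$-function to be determined by its Euler characteristic data once one row is known, this would force $H_{\L'}=F$.

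\textbf{Boundary check.} Using Lemma~\ref{infinity} applied to $\L'=-L_1\cup L_2$, for which $lk(-L_1,L_2)=-l$, and noting that the $H$-function of an (unoriented) knot is independent of its orientation, I would get
\[
H_{\L'}(s_1,\infty)\;=\;H_{-L_1}(s_1+l/2)\;=\;H_{L_1}(s_1+l/2).
\]
On the other hand $F(s_1,\infty)=H_{L_1}(-s_1-l/2)-s_1-l/2$, and the knot case of Lemma~\ref{symmetry2} (which reads $H_{L_1}(-u)=H_{L_1}(u)+u$) immediately collapses this to $H_{L_1}(s_1+l/2)$, matching.

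\textbf{Recursion check.} Substituting $F$ into the right-hand side of \eqref{computeh} for $\L'$, the four copies of the affine shift $-s_1-l/2$ carry signs $(-,+,+,-)$ and telescope to zero, leaving
\[
-H_\L(1-s_1,s_2-1)+H_\L(1-s_1,s_2)+H_\L(-s_1,s_2-1)-H_\L(-s_1,s_2),
\]
which is exactly $-\chi(HFL^-(\L,(1-s_1,s_2)))$ by a second application of \eqref{computeh}, this time to $\L$ at $(1-s_1,s_2)$. So the recursions for $F$ and $H_{\L'}$ agree if and only if
\[
\chi(HFL^-(\L',(s_1,s_2)))\;=\;-\chi(HFL^-(\L,(1-s_1,s_2))).
\]
By~\eqref{Alex} this is equivalent to the generating-function identity $\tilde\Delta_{\L'}(t_1,t_2)=-t_1\,\tilde\Delta_\L(t_1^{-1},t_2)$, and after clearing the $(t_1t_2)^{1/2}$ factors it becomes the standard identity $\Delta_{\L'}(t_1,t_2)=-\Delta_\L(t_1^{-1},t_2)$ for the symmetrized multivariable Alexander polynomial under reversal of a single component (the sign is irrelevant over $\F=\Z/2\Z$, the convention of the paper).

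\textbf{Main obstacle.} The delicate point is fixing that extra factor of $t_1$ (equivalently, the $-l/2$ shift in the claimed formula): the unsymmetrized Alexander polynomial is only well defined up to units $\pm t_1^a t_2^b$, and under orientation reversal it is easy to drop the correct shift. The cure is that $\tilde\Delta_\L$ is pinned down as a genuine function on the lattice $\H(\L)\subset\R^2$, which shifts by $(l,0)/2$-type translates when the linking number flips from $l$ to $-l$; the $t_1$ factor is precisely the affine correction between $\H(\L)$ and $\H(\L')$, and matching this to the $-l/2$ constant in $F$ is the heart of the bookkeeping.
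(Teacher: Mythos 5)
Your strategy is the one the paper itself uses: set $F(s_1,s_2)=H_\L(-s_1,s_2)-s_1-l/2$, verify that $F$ satisfies the four-term recursion \eqref{computeh} for $\L'$ by reducing, after the telescoping of the affine shifts, to the identity $\chi(HFL^-(\L',(s_1,s_2)))=-\chi(HFL^-(\L,(1-s_1,s_2)))$, i.e.\ to $\Delta_{\L'}(t_1,t_2)=-\Delta_\L(t_1^{-1},t_2)$, and check boundary behaviour via Lemma \ref{infinity}, the orientation-independence of the knot $H$-function, and the knot case of Lemma \ref{symmetry2}. All of that matches the paper's proof step for step.

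There is, however, one genuine (if small) logical gap: you assert that the recursion together with \emph{one} boundary row (the $s_2\to\infty$ behaviour) determines the function. It does not. The homogeneous solutions of the four-term mixed-difference equation are exactly the functions of the form $u(s_1)+v(s_2)$; so if $G=H_{\L'}-F$ satisfies the recursion with zero right-hand side and vanishes for $s_2\gg 0$, you can only conclude that $G(s_1,s_2)=v(s_2)$ is independent of $s_1$ with $v(s_2)=0$ for $s_2\gg 0$ --- nothing yet forces $v\equiv 0$ for small $s_2$. You must also match the second boundary row, showing $F(\infty,s_2)=H_{\L'}(\infty,s_2)=H_{L_2}(s_2+l/2)$; the paper does this by the same symmetry trick, using $H_\L(-s_1,s_2)=H_\L(s_1,-s_2)+s_1-s_2$ (a consequence of Lemma \ref{symmetry2}) together with the knot symmetry, exactly parallel to your first boundary check. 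Separately, your parenthetical that ``the sign is irrelevant over $\F=\Z/2\Z$'' is not a valid justification: the recursion is an identity of integers (the $H$-function and $\chi(HFL^-)$ are $\Z$-valued), so the minus sign in $\Delta_{\L'}(t_1,t_2)=-\Delta_\L(t_1^{-1},t_2)$ is essential to making the two recursions agree. Fortunately that is the correct sign (Kidwell, as cited in the paper), so this is a flaw in the reasoning rather than in the conclusion.
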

 
 \begin{proof}
Since $\L$ is an L--space link, $\L'$ is also an L--space link. Let $\phi(s_{1}, s_{2})=H_{\L}(-s_{1}, s_{2})-s_{1}-l/2$. It suffices to prove that $\phi$ satisfies \eqref{computeh} and the boundary condition that $\phi(s_{1}, \infty)=H_{-L_{1}}(s_{1}+l/2)$ and $\phi(\infty, s_{2})=H_{L_{2}}(s_{2}+l/2)$.  We check the boundary condition first. Recall that the Alexander polynomial of the knot $-L_{1}$ is obtained from the Alexander polynomial of $L_{1}$ by substituting $t^{-1}$ for $t$. Then $-L_{1}$ and $L_{1}$ have the same symmetrized Alexander polynomial and both of them are L--space knots. By \eqref{diff}, $H_{-L_{1}}(s_{1})=H_{L_{1}}(s_{1})$ for all $s_{1}\in \Z$. Then 
\begin{eqnarray*}
	\phi(s_{1}, \infty) &=& H_{\L}(-s_{1}, \infty)-s_{1}-l/2\\
	&=& H_{L_{1}}(-s_{1}-l/2)-s_{1}-l/2 \\
	&=& H_{L_{1}}(s_{1}+l/2)=H_{-L_{1}}(s_{1}+l/2). 
\end{eqnarray*}
The proof of $\phi(\infty, s_{2})=H_{L_{2}}(s_{2}+l/2)$ is similar by observing that 
$$H(-s_{1}, s_{2})=H(s_{1}, -s_{2})+s_{1}-s_{2}.$$
Now we check that $\phi(s_{1}, s_{2})$ satisfies \eqref{computeh}. Note that $\Delta_{\L'}(t_{1}, t_{2})=-\Delta_{\L}(t_{1}^{-1}, t_{2})$ \cite{Kid}. Assume that $\Delta_{\L'}(t_{1}, t_{2})=\sum a_{s_{1}, s_{2}}t_{1}^{s_{2}}t_{2}^{s_{2}}$, and  $\Delta_{\L}(t_{1}, t_{2})=\sum b_{s_{1}, s_{2}}t_{1}^{s_{1}}t_{2}^{s_{2}}$.	Then $a_{s_{1}, s_{2}}=-b_{-s_{1}, s_{2}}$. By \eqref{Alex},  $\chi(HFL^{-}(\L', (s_{1}, s_{2})))=a_{s_{1}-1/2, s_{2}-1/2}$, and $\chi(HFL^{-}(\L, (s_{1}, s_{2})))=b_{s_{1}-1/2, s_{2}-1/2}$. Observe that 
\begin{align*}
   & \quad  -\phi(s_{1}-1, s_{2}-1)+\phi(s_{1}-1, s_{2})+\phi(s_{1}, s_{2}-1)-\phi(s_{1}, s_{2})\\ 
   &=-H_{\L}(-s_{1}+1, s_{2}-1)+H_{\L}(-s_{1}+1, s_{2})+H_{\L}(-s_{1}, s_{2}-1)-H_{\L}(-s_{1}, s_{2})\\
   &=-\chi(HFL^{-}(\L, (-s_{1}+1, s_{2})))=-b_{-s_{1}+1/2, s_{2}-1/2}\\
   &=a_{s_{1}-1/2, s_{2}-1/2}=\chi(HFL^{-}(\L', (s_{1}, s_{2}))). 
\end{align*}
Thus, $H_{\L'}(s_{1}, s_{2})=H_{\L}(-s_{1}, s_{2})-s_{1}-l/2.$

 \end{proof}

\section{Surgery complex and truncations }
\label{sec:truncation}
\subsection{Truncated surgery complexes for 2-component L--space links}
\label{subsec:truncation}
We first review the Manolescu-Ozsv\'ath link surgery complex \cite{MO} for oriented links $\L=L_{1}\cup L_{2}$. 

For any sublink $M\subset \L$, set $N=\L-M$. We choose an orientation on $M$ (possibly different from the one induced from $\L$), and denote the corresponding oriented link by $\vec{M}$.   One defines the map
$$\psi^{\vec{M}}: \H(\L)\rightarrow \H(N)$$
as in \cite{MO}. The map $\psi^{\vec{M}}$ depends only on the $i$-summand $\H_{i}(\L)$ of $\H(\L)$ corresponding to $L_{i}\subset N$. Each of $L_{i}$'s appears in $N$ with a  index $j_{i}$, so there is a corresponding summand $\H_{j_{i}}(N)$ of $\H(N)$. Set 
$$\psi^{\vec{M}}_{i}: \H_{i}(\L)\rightarrow \H_{j_{i}}(N), \quad s_{i}\rightarrow s_{i}-\dfrac{lk(L_{i}, \vec{M})}{2}.$$
Then define $\psi^{\vec{M}}$ to be the direct sum of the maps $\psi^{\vec{M}}_{i}$ precomposed with the relevant factors. 

For sublinks $M\subset \L$ with orientation induced from $\L$, we use $\mathcal{H}^{\L-M}$ to denote the Heegaard diagram of $\L-M$ obtained from $\mathcal{H}^{\L}$ by forgetting the $z$ basepoints on the sublink $M$. The diagram $\mathcal{H}^{L-M}$ is associated with the generalized Floer complex $\A^{-}(\mathcal{H}^{\L-M}, \psi^{M}(\bm{s}))$. 

For the general 2-component link $\L$, we describe the chain complex and its differential in detail. We write 
\[
\Lambda=\begin{pmatrix}
d_{1} & l \\
l & d_{2}
\end{pmatrix}.
\]
as the surgery matrix where $l$ denotes the linking number and $d_1, d_2$ denote the surgery coefficients. 

For a link $\L=L_{1}\cup L_{2}$, a two digit binary superscript is used to keep track of which link components are forgotten. Let $\A^{00}_{\bm{s}}=\A^{-}(\mathcal{H}^{\L}, \bm{s}), \A^{01}_{\bm{s}}=\A^{-}(\mathcal{H}^{\L-L_{2}}, s_{1}-l/2), \A^{10}_{\bm{s}}=\A^{-}(\mathcal{H}^{\L-L_{1}}, s_{2}-l/2)$ and $\A^{11}_{\bm{s}}=\A^{-}(\mathcal{H}^{L-L_{1}-L_{2}}, \emptyset)$ where $\bm{s}=(s_{1}, s_{2})\in \H(\L)$. Let 
$$\mathcal{C}_{\bm{s}}=\bigoplus_{\epsilon_{1}, \epsilon_{2}\in \{0, 1\}} \A^{\epsilon_{1}\epsilon_{2}}_{\bm{s}}.$$
The surgery complex is defined as 
$$\mathcal{C}(\mathcal{H^{\L}}, \Lambda)=\prod_{\bm{s}\in \H(\L)} \mathcal{C}_{\bm{s}}. $$ 

The differential in the complex is defined as follows. Consider sublinks $\emptyset, \pm L_{1}, \pm L_{2}$ and $\pm L_{1} \pm L_{2}$ where $\pm$ denotes whether or not the orientation of the sublink is the same as the one induced from $\L$. Based on  \cite{MO}, we have the following maps, where $\Phi^{\emptyset}_{\bm{s}}$ is the internal differential on any chain complex $\A^{\epsilon_{1}\epsilon_{2}}_{\bm{s}}$. 
\begin{eqnarray}
\label{maps1}
\begin{aligned}
\Phi^{L_{1}}_{\bm{s}}: \A^{00}_{\bm{s}}\rightarrow \A^{10}_{\bm{s}}, \quad &\Phi^{-L_{1}}_{\bm{s}}: \A^{00}_{\bm{s}}\rightarrow \A^{10}_{\bm{s}+\Lambda_{1}}, \\
\Phi^{L_{2}}_{\bm{s}}: \A^{00}_{\bm{s}}\rightarrow \A^{01}_{\bm{s}}, \quad &\Phi^{-L_{2}}_{\bm{s}}: \A^{00}_{\bm{s}}\rightarrow \A^{01}_{\bm{s}+\Lambda_{2}}, \\
\Phi^{L_{1}}_{s_{1}}: \A^{01}_{\bm{s}}\rightarrow \A^{11}_{\bm{s}}, \quad &\Phi^{-L_{1}}_{s_{1}}: \A^{01}_{\bm{s}}\rightarrow \A^{11}_{\bm{s}+\Lambda_{1}}, \\
\Phi^{L_{2}}_{s_{2}}: \A^{10}_{\bm{s}}\rightarrow \A^{11}_{\bm{s}}, \quad &\Phi^{-L_{2}}_{s_{2}}: \A^{10}_{\bm{s}}\rightarrow \A^{11}_{\bm{s}+\Lambda_{2}}, 
\end{aligned}
\end{eqnarray}
where $\Lambda_{i}$ is the $i$-th column of $\Lambda$. We did not write the maps $\Phi_{\bm{s}}^{\pm L_{1}\pm L_{2}}$ in detail since we will focus on L--space links and these maps vanish for 2-component L--space links. 
Let 
\[
	D_{\bm{s}}=\Phi^{\varnothing}_{\bm{s}}+\Phi^{\pm L_{1}}_{\bm{s}}+\Phi^{\pm L_{2}}_{\bm{s}}+\Phi^{\pm L_{1}}_{s_{1}}+\Phi^{\pm L_{2}}_{s_{2}}+\Phi^{\pm L_{1}\pm L_{2}}_{\bm{s}},
\]
and let $D=\prod_{\bm{s}\in \H(\L)} D_{\bm{s}}$. Then $(\C(\bH^{\L}, \Lambda), D)$ is the Manolescu-Ozsv\'ath surgery complex.  

\begin{lemma}\cite[Lemma 10.1]{MO}
\label{isomorphism}
There exists a constant $b\in \mathbb{N}$ such that for any $i=1, 2$, and for any sublink $M\subset L$ not containing the component $L_{i}$, the chain map 
\[
	\Phi^{\pm L_{i}}_{\psi^{\vec{M}}(\bm{s})}: \mathfrak{A}^{-}(\mathcal{H}^{L-M}, \psi^{\vec{M}}(\bm{s}))\rightarrow \A^{-}(\bH^{L-M-L_{i}}, \psi^{\vec{M}\cup \pm L_{i}}(\bm{s}))
\]
induces an isomorphism on homology provided that either 
\begin{itemize}
\item $\bm{s}\in \H(\L)$ is such that $s_{i}>b$, and $L_{i}$ is given the orientation induced from $L$; or 

\item $\bm{s}\in \H(\L)$ is such that $s_{i}<-b$, and $L_{i}$ is given the orientation opposite to the one induced from $L$. 
\end{itemize}

\end{lemma}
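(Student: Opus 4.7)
The plan is to establish Lemma \ref{isomorphism} by reinterpreting the chain map $\Phi^{\pm L_i}_{\psi^{\vec{M}}(\bm{s})}$ as a (possibly shifted) inclusion or projection of filtered subcomplexes of $CF^-(S^3)$, and then exhibiting a threshold beyond which this map is automatically a quasi-isomorphism. The key observation is that every generalized Floer complex $\mathfrak{A}^-(\mathcal{H}^{L-M'}, \bm{s}')$ sits inside $CF^-(S^3)$ as the subcomplex cut out by the constraint that each Alexander filtration on the remaining components is at most the corresponding entry of $\bm{s}'$. In these terms $\Phi^{L_i}$ is the map that records the $w$-basepoints on $L_i$ (so no shift), while $\Phi^{-L_i}$ records the $z$-basepoints, producing the shift $\bm{s}\mapsto \bm{s}+\Lambda_i$ in the target.

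First, I would unpack from \cite{MO} the definition of $\Phi^{\pm L_i}$ as a count of holomorphic polygons with controlled multiplicities at the basepoints on $L_i$, and identify its leading term with the naive filtered inclusion or projection of subcomplexes described above. Second, I would invoke the large surgery theorem \cite[Theorem 12.1]{MO}: the homology of $\mathfrak{A}^-(\mathcal{H}^{L-M'}, \bm{s}')$ is identified with $HF^-$ of a large surgery on $L-M'$ in a fixed Spin$^c$-structure, and the $H$-function $H_{L-M'}(\bm{s}')$ stabilizes as soon as some $s'_j$ exceeds the genus-type bound for the $j$-th component. Consequently, for $s_i$ larger than such a threshold, imposing or dropping the $L_i$-constraint affects neither the generators nor the differentials appearing in $\Phi^{L_i}$, so the map agrees with the identity of $\F[[U]]$-modules and is a quasi-isomorphism. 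The analogous argument, with the opposite orientation on $L_i$, handles the regime $s_i<-b$ via $\Phi^{-L_i}$, where the $U$-power shift encoded by $\Lambda_i$ is absorbed into the identification. Taking the maximum threshold over the finitely many pairs $(M, L_i)$ yields a uniform constant $b$ as required.

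The main obstacle will be the first step, namely matching $\Phi^{\pm L_i}$, which is defined in \cite{MO} as a sum over holomorphic triangles with prescribed basepoint behavior, with the naive inclusion or projection of filtered complexes. Concretely, one must show that once $s_i$ lies outside the support of the link Floer homology for every sublink in sight, every holomorphic polygon that a priori contributes to $\Phi^{\pm L_i}$ either collapses to the identity count or cancels in pairs against boundary terms of a one-parameter family. This requires a careful energy/area estimate and a degeneration analysis at the ends of the relevant moduli spaces, and constitutes the technical heart of the argument in \cite{MO}; the rest of the proof, by contrast, is the combinatorial bookkeeping of how the $H$-functions of the various sublinks stabilize.
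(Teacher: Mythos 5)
The paper does not actually prove this statement: it is quoted verbatim as \cite[Lemma 10.1]{MO} and used as a black box to justify the truncation of the surgery complex. So the honest comparison is between your sketch and the proof in Manolescu--Ozsv\'ath. Your high-level picture is the right one (for large $s_i$ the map should become the composition of an inclusion that is essentially the identity with a destabilization that is a homotopy equivalence), but as written the argument has a real gap in the middle, and the part you identify as ``the technical heart'' is simply deferred back to \cite{MO}, so the proposal is not an independent proof.

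The gap is in the step where you invoke the large surgery theorem and the stabilization of the $H$-function to conclude that $\Phi^{\pm L_i}$ is a quasi-isomorphism. First, knowing that the two complexes have isomorphic homology (which is all the large surgery theorem gives you) does not show that the \emph{given} chain map induces that isomorphism. Second, the $H$-function only records the maximal degree of the free part of $H_*(\mathfrak{A}^-)$; for a general link $\mathfrak{A}^-(\mathcal{H}^{L-M},\bm{s}')$ has a large $U$-torsion summand, and the lemma asserts an isomorphism on all of homology, not just on the tower. Your phrase ``agrees with the identity of $\F[[U]]$-modules'' implicitly assumes $H_*(\mathfrak{A}^-)\cong\F[[U]]$, i.e.\ the L--space link case, whereas the lemma is a chain-level statement for arbitrary links and is needed at that generality to set up the truncation before any homology is taken. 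The correct mechanism, and the one used in \cite{MO}, is more elementary and does not pass through homology at all: the Heegaard diagram has finitely many generators, so once $s_i$ exceeds the maximum of their $i$-th Alexander gradings the constraint ``$A_i\le s_i$'' is vacuous and the inclusion into the $s_i=\infty$ complex is an isomorphism of chain complexes on the nose; one then composes with the destabilization map, which is a chain homotopy equivalence. If you replace your large-surgery/$H$-function step with this finiteness argument (and still cite \cite{MO} for the identification of $\Phi^{\pm L_i}$ with inclusion followed by destabilization and for the destabilization being a homotopy equivalence), the sketch becomes correct; taking the maximum threshold over the finitely many pairs $(M,L_i)$, as you do, then yields the uniform constant $b$.
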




\subsection{Perturbed surgery formula.}
\label{perturbed}
Up to homotopy equivalence, one can replace every complex $\A^{\epsilon_{1}\epsilon_{2}}_{\bm{s}}$ where $\epsilon_{1}, \epsilon_{2}=0$ or $1$ by its chain homotopy type and replace every differential map $\Phi^{\pm L_{i}}_{\bm{s}}$ by its homotopy type. Then the Manolescu-Ozsv\'ath surgery complex becomes a \emph{perturbed surgery foumula} \cite{Liu}. More concretely,  for a 2-component L--space link $\L$, we replace the complexes $\A^{\epsilon_{1}, \epsilon_{2}}_{\bm{s}}$ by
$$H_{\ast}(\A^{\epsilon_{1}\epsilon_{2}}_{\bm{s}})\cong \F[[U]]. $$
We replace the edge maps $\Phi^{\pm L_{i}}_{\bm{s}}$ as follows:
\begin{eqnarray}
\label{maps2}
\begin{aligned}
\Phi^{L_{1}}_{s_{1}, s_{2}}=U^{H(s_{1}, s_{2})-H(\infty, s_{2})}, \quad &\Phi^{-L_{1}}_{s_{1}, s_{2}}: U^{H(-s_{1}, -s_{2})-H(\infty, -s_{2})}, \\
\Phi^{L_{2}}_{s_{1}, s_{2}}: U^{H(s_{1}, s_{2})-H(s_{1}, \infty)}, \quad &\Phi^{-L_{2}}_{s_{1}, s_{2}}=U^{H(-s_{1}, -s_{2})-H(-s_{1}, \infty)}, \\
\Phi^{L_{1}}_{s_{1}}=U^{H(s_{1}, \infty)}=U^{H_{1}(s_{1}-l/2)}, \quad &\Phi^{-L_{1}}_{s_{1}}=U^{H_{1}(l/2-s_{1})}, \\
\Phi^{L_{2}}_{s_{2}}=U^{H( \infty, s_{2})}=U^{H_{2}(s_{2}-l/2)}, \quad &\Phi^{-L_{2}}_{s_{2}}=U^{H_{2}(l/2-s_{2})}, 
\end{aligned}
\end{eqnarray}
where $H(s_{1}, s_{2}), H_{1}(s_{1})$ and $H_{2}(s_{2})$ are H-functions of $\L, L_{1}$ and $L_{2}$, respectively. 

\begin{figure}[H]
\centering
\includegraphics[width=3.0in]{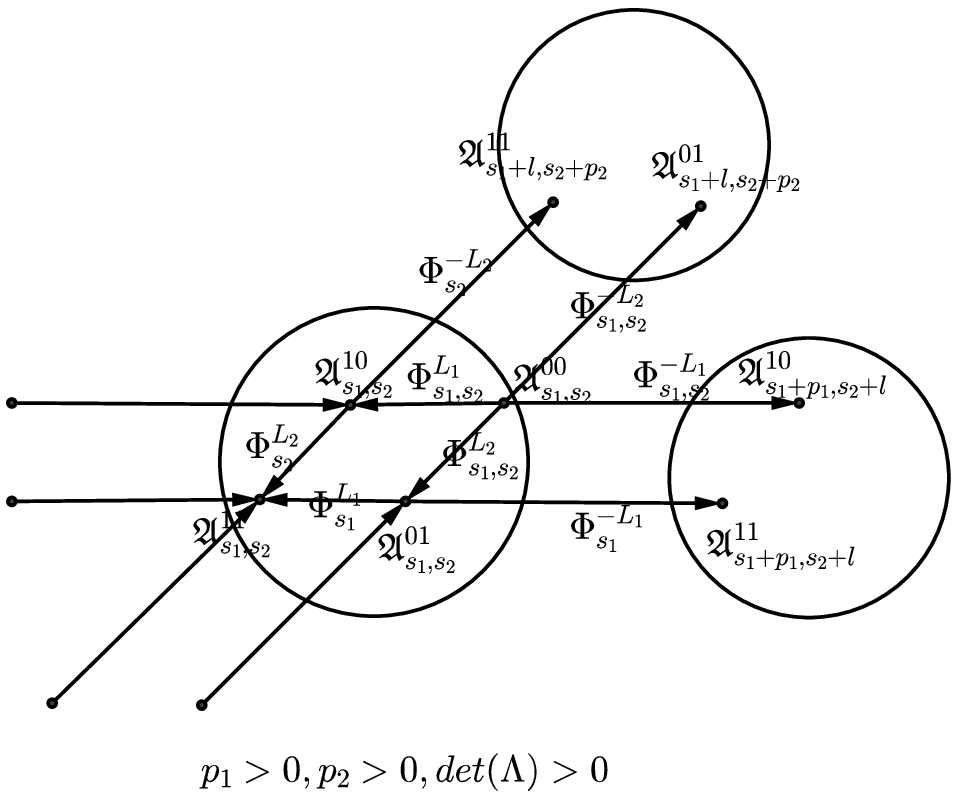}
\caption{ \label{surgery map}}

\end{figure}

We denote the perturbed complex as $\widetilde{C}(\Lambda)$, and it is chain homotopy equivalent to the original chain complex $\C(\bH^{\L}, \Lambda)$ as $\F[[U]]$-modules. Hence, $H_{\ast}(\widetilde{C}(\Lambda))\cong HF^{-}(S^{3}_{\Lambda}(\L))$ as an $\F[[U]]$-module \cite{Liu, MO}. 

The surgery complex splits as a direct sum corresponding to Spin$^{c}$-structures. Recall that for the surgery matrix $\Lambda$ associated to $\L$, there is an identification: Spin$^{c}(S^{3}_{\Lambda}(\L))=\H(\L)/ H(L ,\lambda)$, where $H(L, \lambda)$ is the lattice spanned by $\Lambda$ \cite{MO}.

Now we  review the truncated perturbed surgery complex. We refer the reader to \cite{Liu, MO} for details. The constant $b$ in Lemma \ref{isomorphism} determines a parallelogram $Q$ in the plane, with vertices $P_{1}, P_{2}, P_{3}, P_{4}$ counterclockwise  labelled, satisfying the following condition: The point $P_{i}$ has the coordinate $(x_{i}, y_{i})$ such that 
\begin{equation}
\label{condition2}
\begin{cases}
   x_{1}>b \\
   y_{1}>b,
\end{cases}
\begin{cases}
   x_{2}<-b \\
   y_{2}>b,
\end{cases}
\begin{cases}
   x_{3}<-b \\
   y_{3}<-b,
\end{cases}
\begin{cases}
   x_{4}>b \\
   y_{4}<-b.
\end{cases}
\end{equation}
We also require that every edge of $Q$ is either parallel to the vector $\Lambda_{1}$ with length greater than $|| \Lambda_{1}||$ or parallel to $\Lambda_{2}$ with length greater than $|| \Lambda_{2}||$. The way of doing truncation is not unique. We follow the way Y. Liu did in \cite{Liu}. One can choose the parallelogram $Q$ to be centered at the origins as follows. Let 
$$\{ P_{1}, P_{2}, P_{3}, P_{4} \}=\left\lbrace \dfrac{ i_{0}\Lambda_{1}+j_{0}\Lambda_{2}}{2}, \dfrac{ -i_{0}\Lambda_{1}+j_{0}\Lambda_{2}}{2}, \dfrac{ i_{0}\Lambda_{1}-j_{0}\Lambda_{2}}{2}, \dfrac{- i_{0}\Lambda_{1}-j_{0}\Lambda_{2}}{2} \right\rbrace, $$
with $i_0, j_0$ being positive integers, such that  \eqref{condition2} holds.

Instead of using the constant $b$ to truncate the surgery complex, one can also use different constants $b'_{1}, b'_{2}\in \mathbb{N}$ to truncate the complex in vertical and in horizontal directions. Then $\Phi^{\pm L_{i}}_{\psi^{\pm M}(\bm{s})}$ induces an isomorphism on homology whenever $|s_{i}|>b'_{i}$ and $L_{i}$ has the orientation corresponding to the sign of $s_{i}$ for $i=1, 2$. Let $b_{i}$ be the minimal number among the choices of $b'_{i}$. For 2-component L--space links $\L$, we can use \eqref{maps2} to define $b_{i}$ in terms of the $H$-function. 

\begin{definition}
\label{def:b}
For an oriented 2-component L--space link $\L$ with linking number $l$, we define:
$$b_{1}(\L)=\min \{ \lceil s_{1}-1\rceil \mid H(s_{1}, s_{2})=H(\infty, s_{2}) \textup{ for all } s_{2} \},$$
$$b_{2}(\L)=\min \{ \lceil s_{2}-1 \rceil \mid H(s_{1}, s_{2})=H(s_{1}, \infty) \textup{ for all } s_{1} \}.$$
\end{definition}

Fix the surgery matrix  $\Lambda$. Now we review the finitely generated surgery complex after truncation in the  Spin$^{c}$-structure  $\mathfrak{u}\in \H(\L)/ H(\L, \Lambda)$. For details, see \cite{Liu}. Let $S^{\epsilon_{1}\epsilon_{2}}$ denote the collection of summands $\A^{\epsilon_{1}\epsilon_{2}}_{\bm{s}}$ of the truncated surgery complex in the Spin$^{c}$-structure $\mathfrak{u}$ where $\epsilon_{1}, \epsilon_{2}=0$ or $1$.

Suppose 
$$\bm{s}=\theta_{1}\Lambda_{1}+\theta_{2}\Lambda_{2}\in \mathfrak{u}, \quad P_{1}=a_{1}\Lambda_{1}+a_{2}\Lambda_{2}. $$
Denote 
$$A_{1}=\lceil -\theta_{1}-|a_{1}|\rceil, \quad A_{2}=\lfloor -\theta_{1}+|a_{1}|\rfloor, $$
$$B_{1}=\lceil -\theta_{2}-|a_{2}| \rceil, \quad B_{2}=\lfloor -\theta_{2}+|a_{2}|\rfloor.$$
Based on the signs of $d_{1}, d_{2}$ and $\det\Lambda$, there are six cases for the truncated regions.

\textbf{Case 1:} $d_{1}>0, d_{2}>0, \det(\Lambda)>0$. 
$$S^{00}(\Lambda, \mathfrak{u})=\mathfrak{u} \cap Q, \quad S^{10}(\Lambda, \mathfrak{u})=\mathfrak{u}\cap Q\cap (Q+\Lambda_{1}), $$
$$S^{01}(\Lambda, \mathfrak{u})=\mathfrak{u}\cap Q\cap (Q+\Lambda_{2}), \quad S^{11}(\Lambda, \mathfrak{u})=\mathfrak{u}\cap Q\cap (Q+\Lambda_{1}+\Lambda_{2}).$$ 

\textbf{Case 2:} $d_{1}<0, d_{2}<0, \det(\Lambda)>0$. 
$$S^{00}(\Lambda, \mathfrak{u})=\mathfrak{u}\cap Q, \quad S^{01}(\Lambda, \mathfrak{u})=\mathfrak{u}\cap \{Q\cup (Q+\Lambda_{1})\},$$
$$S^{01}(\Lambda, \mathfrak{u})=\mathfrak{u}\cap \{Q\cup (Q+\Lambda_{2})\}, $$
$$\quad S^{11}(\Lambda, \mathfrak{u})=\mathfrak{u}\cap \{Q\cup (Q+\Lambda_{1})\cup (Q+\Lambda_{2})\cup (Q+\Lambda_{1}+\Lambda_{2})\}.$$

\textbf{Case 3:} $d_{1}>0, d_{2}<0$. 
$$S^{00}(\Lambda, \mathfrak{u})=\mathfrak{u}\cap Q, \quad S^{10}(\Lambda, \mathfrak{u})=\mathfrak{u}\cap \{ Q\cap (Q+\Lambda_{1}) \}, $$
$$S^{01}(\Lambda, \mathfrak{u})=\mathfrak{u}\cap \{ Q\cup (Q+\Lambda_{2})\},$$
$$ \quad S^{11}(\Lambda, \mathfrak{u})=\mathfrak{u}\cap \{[Q\cup (Q+\Lambda_{2})]\cap ([Q\cup (Q+\Lambda_{2})]+\Lambda_{1}) \}.$$

\textbf{Case 4:} $d_{1}<0, d_{2}>0$. 
$$S^{00}(\Lambda, \mathfrak{u})=\mathfrak{u}\cap Q, \quad S^{10}(\Lambda, \mathfrak{u})=\mathfrak{u}\cap \{Q\cup (Q+\Lambda_{1})\}, $$
$$S^{01}(\Lambda, \mathfrak{u})=\mathfrak{u}\cap \{Q\cap (Q+\Lambda_{2})\},$$
$$ \quad S^{11}(\Lambda, \mathfrak{u})=\mathfrak{u}\cap \{[Q\cap (Q+\Lambda_{2})]\cup ([Q\cap (Q+\Lambda_{2})]+\Lambda_{1})\}.$$

\textbf{Case 5:}  $l>0, \det(\Lambda)<0$. 
$$S^{00}(\Lambda, \mathfrak{u})=\mathfrak{u}\cap Q, \quad S^{10}(\Lambda, \mathfrak{u})=(\mathfrak{u}\cap Q\cap (Q+\Lambda_{1}))\cup T^{10},$$
$$S^{01}(\Lambda, \mathfrak{u})=(\mathfrak{u}\cap Q \cap (Q+\Lambda_{2}))\cup T^{01},$$
$$ \quad S^{11}(\Lambda, \mathfrak{u})=\mathfrak{u}\cap Q\cap (Q+\Lambda_{1}+\Lambda_{2}), $$
where $T^{10}=\{\bm{s}+A_{2}\Lambda_{1}+B_{1}\Lambda_{2}\}, T^{01}=\{\bm{s}+A_{1}\Lambda_{1}+B_{2}\Lambda_{2}\}$. 

\textbf{Case 6:} $l<0, \det(\Lambda)<0$. 
$$S^{00}(\Lambda, \mathfrak{u})=\mathfrak{u}\cap Q \cap (Q-\Lambda_{1}-\Lambda_{2}),$$
$$ \quad S^{10}(\Lambda, \mathfrak{u})=(\mathfrak{u}\cap Q\cap (Q-\Lambda_{1}))\cup T^{10}, $$
$$S^{01}(\Lambda, \mathfrak{u})=(\mathfrak{u}\cap Q \cap (Q-\Lambda_{2}))\cup T^{01}, \quad S^{11}(\Lambda, \mathfrak{u})=\mathfrak{u}\cap Q, $$
where $T^{10}=\bm{s}+A_{1}\Lambda_{1}+B_{2}\Lambda_{2}, T^{01}=\bm{s}+B_{1}\Lambda_{2}+A_{1}\Lambda_{1}$. 

Denote 
$$\bar{C}^{\epsilon_{1}\epsilon_{2}}(\Lambda, \mathfrak{u})=\bigoplus_{i\in \Z} \bigoplus_{ j\in \Z, \bm{s}+i\Lambda_{1}+j\Lambda_{2}\in S^{\epsilon_{1}\epsilon_{2}}} \A^{\epsilon_{1}\epsilon_{2}}_{\bm{s}+i\Lambda_{1}+j\Lambda_{2}}, $$
where $\epsilon_{1}, \epsilon_{2}\in \{0, 1\}$. 

The truncated complex is defined as 
$$\bar{C}(\bH, \Lambda, \mathfrak{u})=\bigoplus_{\epsilon_{1}, \epsilon_{2}\in \{0, 1\}} \bar{C}^{\epsilon_{1}\epsilon_{2}}(\Lambda, \mathfrak{u}).$$
The differential is obtained by restricting $D$ to $\bar{C}(\bH, \Lambda, \mathfrak{u})$. Up to homotopy equivalence, we simply regard $\A^{\delta_{1}\delta_{2}}_{\bm{s}+i\Lambda_{1}+j\Lambda_{2}}$ as its homotopy type $\F[[U]]$ and the differentials as the ones defined in \eqref{condition2}. It is homotopy equivalent to $(\C(\bH^{\L}, \Lambda, \mathfrak{u}), D)$. Hence the homology of the truncated perturbed complex is isomorphic to $HF^{-}(S^{3}_{\Lambda}(\L), \mathfrak{u})$ \cite{MO},  up to some grading shift. Since we are working on truncated surgery complexes from here on, it suffices to consider polynomials over $\F[U]$. 

By putting $U=0$, we get the chain complex of $\F$-vector spaces $\widehat{C}(\Lambda, \mathfrak{u})$ whose homology is isomorphic to $\widehat{HF}(S^{3}_{\Lambda}(\L), \mathfrak{u})$. Note that the differential $\Phi^{\pm L_{i}}_{\bm{s}}$ will be replaced by $\widehat{\Phi}^{\pm L_{i}}_{\bm{s}}$ which should be either 0 or 1 from $\F$ to $\F$.


\subsection{The associated CW-complexes}  
\label{location}
In this section, we associate a finite rectangular CW-complex to the truncated surgery complex. We refer the reader to \cite[Section 3.3]{GLM} for more details. Each $\A^{00}_{\bm{s}}$ in the truncated surgery complex corresponds to a 2-cell. Each $\A^{01}_{\bm{s}}$ and $\A^{10}_{\bm{s}}$ corresponds to a 1-cell, and each $\A^{11}_{\bm{s}}$ corresponds to a 0-cell with the boundary map specified by \eqref{maps2}. 

According to the different signs of $d_1, d_2$ and $\det \Lambda$, there are six cases for the truncation process described in Section \ref{perturbed}. In all these cases, the associated CW-complex is a rectangle $R$ on a square lattice, with some parts of the boundary erased. Consider the chain complex $C$ generated by the squares, edges, and vertices of $R$ over $\F$ with the usual differential $\partial $. Then the homology of $C$ is isomorphic to the homology of $R$ relative to the erased parts of the boundary. More precisely, we will have the following three situations:

\begin{itemize}
\item[(a)]  For case 1 in Section \ref{perturbed}, the CW-complex $R$ is a rectangle with all 1-cells and 0-cells  on the boundary  erased as shown  in Figure \ref{sides}. Then $(R, \partial R)\simeq (S^{2}, pt)$. Therefore $H_2(C,\partial)\cong\F$ is generated by the sum of all 2-cells, and all other homologies vanish. 
\item[(b)] For case 2 in Section \ref{perturbed}, the CW-complex $R$ is a rectangle with none of the cells erased  in Figure \ref{sides}. Then $R$ is contractable, so $H_{0}(C, \partial )\cong \F$ is generated by the class of a 0-cell, and all other homologies vanish. 
\item[(c)] For other 4 cases in Section \ref{perturbed}, the CW complex $R$ is a rectangle with some 1-cells and 0-cells erased on the boundary in Figure \ref{sides}. Then $R$ relative to the erased cells is homotopy equivalent to $(S^{1}, pt)$. Therefore, $H_{1}(C, \partial)\cong \F$ is generated by the class of any path connecting erased boundaries, and all other homologies vanish. 
\end{itemize}

\begin{figure}[H]
\centering
\includegraphics[width=1.1in]{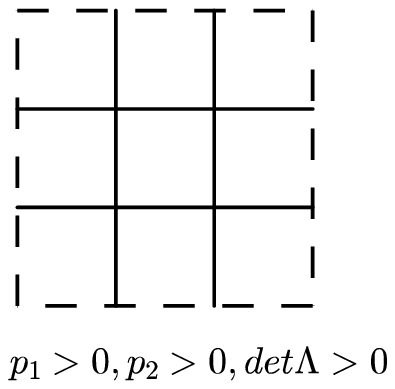}
\hspace{0.3in}
\includegraphics[width=1.1in]{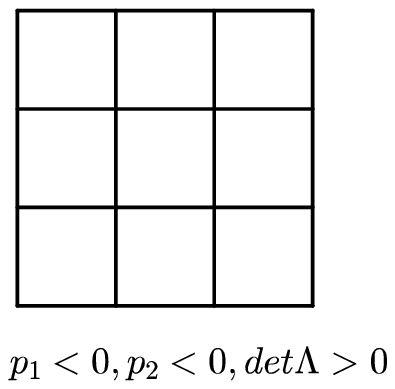} 
\hspace{0.3in}
\includegraphics[width=1.1in]{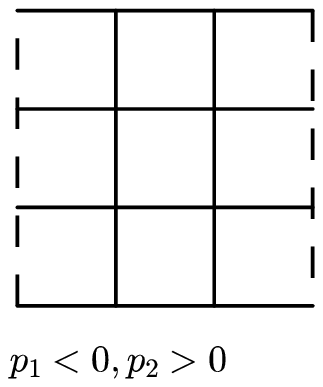}
\hspace{0.3in}
\includegraphics[width=1.0in]{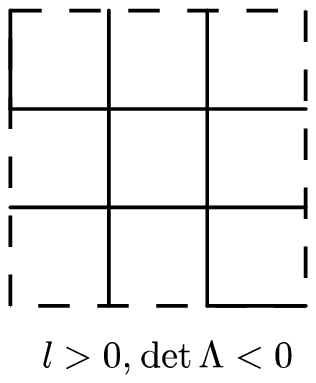}

\caption{ \label{sides}}
\end{figure}

 We associate a $(2-\epsilon_{1}-\epsilon_{2})$-dimensional cell in $C$ to $\A^{\epsilon_{1}\epsilon_{2}}_{\bm{s}}$ for $\epsilon_{1}, \epsilon_{2}\in \{0, 1\}$. One can construct a chain map from the truncated surgery complex $\bar{C}(\bH, \Lambda, \mathfrak{u})$ to the cell-complex $C$, see \cite[Section 4]{GLM}. Each cell $\square$ in $(C, \partial)$ corresponds to a copy of $\F[U]\cong H_{\ast}(\A^{\epsilon_{1}\epsilon_{2}}_{\bm{s}})$ generated by some element $z(\square)$ where the cell $\square$ is associated to $\A^{\epsilon_{1}\epsilon_{2}}_{\bm{s}}$. We denote the homological grading of $z(\square)$ by $\deg(\square)$. Recall that $U$ has homological grading $-2$. Then the degree of $z(\square)U^{k}\in H_{\ast}(\A^{\epsilon_{1}\epsilon_{2}}_{\bm{s}})$ equals $\deg(\square)-2k$, and we call $z(\square)U^{k}$ the graded lift of the cell $\square$ of degree $\deg(\square)-2k$. 

\begin{theorem}\cite[Corollary 4.3]{GLM}
\label{cellhom}
The free part of the homology $H_{\ast}(\bar{C}(\bH, \Lambda, \mathfrak{u}), D)/Tors$ is generated by the graded lifts of representatives of homology classes in $H_{\ast}(C, \partial)$. Two classes are equivalent if and only if they have the same degree and lift the same homology class. 
\end{theorem}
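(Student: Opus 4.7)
The plan is to exploit the $\F[[U]]$-module structure of $\bar{C}(\bH, \Lambda, \mathfrak{u})$. After passing to the perturbed model of Section~\ref{perturbed}, each chain group $\A^{\epsilon_1 \epsilon_2}_{\bm s}$ is replaced by a free $\F[[U]]$-module of rank one generated by $z(\square)$, and every edge map is multiplication by a non-negative power of $U$ as recorded in \eqref{maps2}. The cell complex $C$ encodes exactly the same combinatorial data with the $U$-weights forgotten. Once we invert $U$, every edge map becomes a unit in $\F((U))$, so I expect a quasi-isomorphism
\[
\bar{C}\otimes_{\F[[U]]}\F((U))\;\simeq\; C\otimes_{\F}\F((U)).
\]
Because inverting $U$ kills the torsion, this would identify $H_*(\bar C)/\mathrm{Tors}$ with graded lifts of $H_*(C,\partial)$, which is the content of the theorem.

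Concretely I would proceed in four steps. First, write down a comparison map $\Psi\colon \bar C \to C$ by sending $z(\square)\mapsto \square$ and $U\cdot z(\square)\mapsto 0$; it is a chain map only after inverting $U$, but that suffices for computing the free part. Second, and this is the heart of the argument, construct a section at the level of cycles: given $\alpha=\sum a_\square\,\square\in C$ with $\partial\alpha=0$, choose non-negative integers $k_\square$ so that $\tilde\alpha=\sum a_\square\,z(\square)\,U^{k_\square}$ satisfies $\partial\tilde\alpha=0$ in $\bar C$. Using \eqref{maps2}, this amounts to arranging, for every boundary cell $\square'$, that the exponents $k_\square+n(\square,\square')$ all coincide over the cells $\square$ incident to $\square'$, so that the coefficient of $z(\square')$ in $\partial\tilde\alpha$ becomes a single $U$-power times $\sum a_\square = 0$ (we are in characteristic $2$). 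Third, record that the homological degree of $\tilde\alpha$ is $\deg(\square)-2k_\square$ on each cell, which is exactly the \emph{graded lift} appearing in the statement. Fourth, prove the converse by showing that the minimal-$U$-power ``leading term'' of any non-torsion cycle in $\bar C$ projects to a cycle in $C$, yielding a well-defined inverse to $\iota$ on homology.

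The main obstacle is the second step, namely the combinatorial problem of producing the $k_\square$ coherently across the rectangle $R$. Existence of a consistent choice is a cocycle condition on the exponents $n(\square,\square')$ attached to each incidence in $R$, and I would verify it by combining the monotonicity of the $H$-function (Lemma~\ref{growth control}) with the stabilization identities of Lemma~\ref{infinity}; together these make $n(\square,\square')$ behave like the difference of the $H$-function between adjacent cells, which is automatically closed around every loop in $R$. Once this is in place, the equivalence assertion — that two cycles in $C$ give the same class in $H_*(\bar C)/\mathrm{Tors}$ if and only if they have the same degree and represent the same class in $H_*(C,\partial)$ — is immediate, since the lift $\iota$ is unique up to boundaries and multiplication by powers of $U$.
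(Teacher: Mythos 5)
The paper does not prove this statement itself --- it is imported from \cite[Corollary 4.3]{GLM}, and the only indication of the argument given here is the construction of a chain map from $\bar{C}(\bH,\Lambda,\mathfrak{u})$ to the cellular complex $C$. Your forward direction is essentially that construction and is sound: choosing the $k_{\square}$ so that every term $z(\square)U^{k_{\square}}$ lies in a single homological degree $d$ makes the coefficient of each $z(\square')$ in $D\tilde{\alpha}$ a common power of $U$ times $(\partial\alpha)_{\square'}$, which vanishes. However, the coherence of the exponents $n(\square,\square')$ is not a consequence of Lemma \ref{growth control} and Lemma \ref{infinity} alone: those give nonnegativity and the boundary values of the $H$-function, but for the loops involving the maps $\Phi^{-L_i}$ in \eqref{maps2} you need the symmetry $H(-\bm{s})=H(\bm{s})+s_1+s_2$ (Lemma \ref{symmetry2}, Corollary \ref{symmetry3}), or more robustly the fact that $D$ is homogeneous of degree $-1$ for the Maslov grading (equivalently, $D^2=0$ on free rank-one $\F[[U]]$-modules in characteristic $2$ forces the two compositions around every elementary square to carry the same $U$-power). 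Your comparison map is also misstated: sending $z(\square)\mapsto\square$ and $U z(\square)\mapsto 0$ is not a chain map, and ``inverting $U$'' cannot repair a map that has already killed $U$; the map that works is $U^{k}z(\square)\mapsto\square$ for \emph{all} $k\geq 0$, which is an honest chain map of $\F$-complexes precisely because it forgets the $U$-exponents uniformly.

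The genuine gap is in your fourth step, which is where the content of the theorem lives. The ``minimal-$U$-power leading term'' of a cycle in $\bar{C}$ is not well defined (minimality is a per-cell notion, and the candidate leading part need not be closed in $C$), so you have not produced an inverse on homology. The localization isomorphism $\bar{C}\otimes\F((U))\cong C\otimes\F((U))$ does follow from the coherent rescaling, but it only identifies the \emph{rank} of $H_{\ast}(\bar{C})/\mathrm{Tors}$ with $\dim_{\F}H_{\ast}(C,\partial)$; it does not show that the graded lifts realize the top of each $\F[[U]]$-tower, i.e.\ that every non-torsion class equals a graded lift rather than a proper $U$-multiple of one, nor does it give the degree-by-degree equivalence criterion in the second sentence of the statement. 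To close this you must argue in each fixed homological degree with the honest chain map $U^{k}z(\square)\mapsto\square$ (showing it is surjective on homology and that its kernel meets the free part only in boundaries), which is the filtration argument carried out in \cite[Section 4]{GLM}.
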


Recall that $\bar{C}(\bH, \Lambda, \mathfrak{u})=\bigoplus_{\epsilon_{1}, \epsilon_{2}}\bar{C}^{\epsilon_{1}\epsilon_{2}}(\Lambda, \mathfrak{u})$. Up to homotopy equivalence, we can regard 
$$\bar{C}^{\epsilon_{1}\epsilon_{2}}(\Lambda, \mathfrak{u})=\bigoplus_{i\in \Z}\bigoplus_{j\in \Z, \bm{s}+i\Lambda_{1}+j\Lambda_{2}\in S^{\epsilon_{1}\epsilon_{2}}} H_{\ast}(\A^{\epsilon_{1}\epsilon_{2}}_{\bm{s}+i\Lambda_{1}+j\Lambda_{2}}).$$

\begin{corollary}
\label{generation1}
\begin{itemize}
\item[(a)] If $d_{1}>0, d_{2}>0, \det(\Lambda)>0$, then the free part of $HF^{-}(S^{3}_{d_{1}, d_{2}}(\L), \mathfrak{u})$ is generated by a chain in $\bar{C}^{00}(\Lambda, \mathfrak{u})$. 
\item[(b)] If $d_{1}<0, d_{2}<0, \det(\Lambda)>0$, then the free part of  $HF^{-}(S^{3}_{d_{1}, d_{2}}(\L), \mathfrak{u})$ is generated by a chain in $\bar{C}^{11}(\Lambda, \mathfrak{u})$. 
\item[(c)] For the rest of the cases,  the free part of  $HF^{-}(S^{3}_{d_{1}, d_{2}}(\L), \mathfrak{u})$ is generated by a chain in $\bar{C}^{01}(\Lambda, \mathfrak{u})\oplus \bar{C}^{10}(\Lambda, \mathfrak{u})$.
\end{itemize}
\end{corollary}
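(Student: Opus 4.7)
The plan is to deduce the corollary directly from Theorem \ref{cellhom}, combined with the explicit computation of $H_*(C,\partial)$ performed in the three shape-cases (a), (b), (c) of Section \ref{location}. The essential observation is that the dimension of the cell carrying the generator of $H_*(C,\partial)$ dictates which of the four pieces $\bar C^{\epsilon_1\epsilon_2}(\Lambda,\mathfrak u)$ the generator of the free part of $HF^-$ must live in, because the association $\A^{\epsilon_1\epsilon_2}_{\bm s}\leftrightarrow (2-\epsilon_1-\epsilon_2)$--cell recorded in Section \ref{location} matches dimensions exactly. So the proof is mostly bookkeeping: match the six sign-cases of Section \ref{perturbed} to the three shape-cases of Section \ref{location}, then read off which $\A^{\epsilon_1\epsilon_2}$ cells are involved.

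For part (a), the conditions $d_1>0$, $d_2>0$, $\det\Lambda>0$ correspond to Case 1 of Section \ref{perturbed}. An inspection of the sets $S^{\epsilon_1\epsilon_2}$ in this case shows that the resulting CW-complex $R$ is a rectangle with all $1$-cells and $0$-cells on the boundary erased, so $(R,\partial R)\simeq (S^2,pt)$; this is shape (a) of Section \ref{location}. Hence $H_2(C,\partial)\cong\F$, generated by the sum of all $2$-cells, i.e.\ by a chain supported in $\bar C^{00}(\Lambda,\mathfrak u)$. Theorem \ref{cellhom} now produces a graded lift of this class as a free generator of $HF^-(S^3_{d_1,d_2}(\L),\mathfrak u)/\mathrm{Tors}$, and this lift by construction lies in $\bar C^{00}(\Lambda,\mathfrak u)$.

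For part (b), the conditions $d_1<0$, $d_2<0$, $\det\Lambda>0$ correspond to Case 2 of Section \ref{perturbed}, whose truncation sets describe a rectangle with no boundary cells erased (shape (b) of Section \ref{location}), so $R$ is contractible and $H_0(C,\partial)\cong\F$ is generated by the class of any single $0$-cell. Since $0$-cells correspond to summands $\A^{11}_{\bm s}$, any graded lift to $\bar C^{11}(\Lambda,\mathfrak u)$ provides, via Theorem \ref{cellhom}, a free generator of $HF^-(S^3_{d_1,d_2}(\L),\mathfrak u)$. For part (c), the four remaining sign combinations correspond to Cases 3--6 of Section \ref{perturbed}; I would check case by case that each gives a rectangle with boundary erased on two opposite pieces so that $(R,\partial_{\text{erased}}R)\simeq (S^1,pt)$, which is shape (c). Then $H_1(C,\partial)\cong\F$ is generated by any path of $1$-cells connecting the two erased boundary regions, and since $1$-cells correspond exactly to summands $\A^{01}_{\bm s}$ and $\A^{10}_{\bm s}$, the graded lift required by Theorem \ref{cellhom} is a chain in $\bar C^{01}(\Lambda,\mathfrak u)\oplus\bar C^{10}(\Lambda,\mathfrak u)$.

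The only real step to verify, and the one I anticipate as the mild obstacle, is the geometric check for part (c): one must confirm that in each of Cases 3--6 of Section \ref{perturbed} the pattern of which $0$-cells and $1$-cells of $\partial R$ remain (and which are erased) really does yield the homotopy type $(S^1,pt)$ relative to the erased part. This follows by directly drawing the four rectangles $S^{00}\supset S^{10},S^{01}\supset S^{11}$ using the formulas with $A_1,A_2,B_1,B_2$ and observing that in each of the four cases the erased portion of $\partial R$ consists of two disjoint arcs, so the pair is homotopy equivalent to $(S^1,pt)$. Once these pictures are in hand, Theorem \ref{cellhom} supplies the corollary immediately in all three parts.
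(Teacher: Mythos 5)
Your proposal is correct and follows essentially the same route as the paper: the paper's proof is exactly the combination of the surgery theorem $HF^{-}(S^{3}_{d_{1},d_{2}}(\L),\mathfrak{u})\cong H_{\ast}(\bar{C}(\bH,\Lambda,\mathfrak{u}))$ with Theorem \ref{cellhom}, reading off the cell dimension of the generator of $H_{\ast}(C,\partial)$ in each of the three shape-cases already recorded in Section \ref{location}. The geometric check you flag for part (c) is precisely the content of item (c) of Section \ref{location}, so nothing further is needed.
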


\begin{proof}
The proof is straight-forward by using  the surgery theorem  $HF^{-}(S^{3}_{d_{1}, d_{2}}(\L), \mathfrak{u})\cong H_{\ast}(\bar{C}(\bH, \Lambda, \mathfrak{u}))$ \cite{MO} and Theorem \ref{cellhom}.

\end{proof}

 Recall that  $\widehat{HF}(S^{3}_{d_{1}, d_{2}}(\L), \mathfrak{u})$ is isomorphic to the homology of the chain complex of $\F$-vector spaces $\widehat{C}(\Lambda, \mathfrak{u})$ which is obtained from $\bar{C}(\bH, \Lambda, \mathfrak{u})$ by putting $U=0$ \cite{Liu}. Suppose that $H_{\ast}(\A^{\epsilon_{1}\epsilon_{2}}_{\bm{s}})\cong \F[U]$ is generated by $z^{\epsilon_{1}\epsilon_{2}}_{\bm{s}}$ as a $\F[U]$-module. We let $\widehat{\A}^{\epsilon_{1}\epsilon_{2}}_{\bm{s}}\cong \F$ denote the vector space generated by $z^{\epsilon_{1}\epsilon_{2}}_{\bm{s}}$. Then 
$\widehat{C}( \Lambda, \mathfrak{u})=\bigoplus_{\epsilon_{1}, \epsilon_{2}}\widehat{C}^{\epsilon_{1}\epsilon_{2}}(\Lambda, \mathfrak{u})$
where 
$$\widehat{C}^{\epsilon_{1}\epsilon_{2}}(\Lambda, \mathfrak{u})=\bigoplus_{i\in \Z}\bigoplus_{j\in \Z, \bm{s}+i\Lambda_{1}+j\Lambda_{2}\in S^{\epsilon_{1}\epsilon_{2}}} \widehat{\A}^{\epsilon_{1}\epsilon_{2}}_{\bm{s}+i\Lambda_{1}+j\Lambda_{2}}.$$
 
\begin{corollary}
\label{generation2}
For a 2-component L--space link $\L=L_{1}\cup L_{2}$ with linking number $l$, suppose that $S^{3}_{d_{1}, d_{2}}(\L)$ is an L--space. For any Spin$^{c}$-structure $\mathfrak{u}$, we have:

\begin{itemize}
\item[(a)] If $d_{1}>0, d_{2}>0, \det(\Lambda)>0$, then $\widehat{HF}(S^{3}_{d_{1}, d_{2}}(\L), \mathfrak{u})\cong \F$ is generated by a chain in $\widehat{C}^{00}(\Lambda, \mathfrak{u})$. 
\item[(b)] If $d_{1}<0, d_{2}<0, \det(\Lambda)>0$, then $\widehat{HF}(S^{3}_{d_{1}, d_{2}}(\L), \mathfrak{u})$ is generated by a chain in $\widehat{C}^{11}(\Lambda, \mathfrak{u})$. 
\item[(c)] For the rest of the cases,    $\widehat{HF}(S^{3}_{d_{1}, d_{2}}(\L), \mathfrak{u})$ is generated by a chain in $\widehat{C}^{01}(\Lambda, \mathfrak{u})\oplus \widehat{C}^{10}(\Lambda, \mathfrak{u})$.

\end{itemize}
\end{corollary}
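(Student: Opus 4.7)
The plan is to deduce Corollary \ref{generation2} directly from Corollary \ref{generation1} by passing through the reduction-mod-$U$ chain map $\pi \colon \bar{C}(\bH, \Lambda, \mathfrak{u}) \to \widehat{C}(\Lambda, \mathfrak{u})$. Because $\pi$ is obtained by setting $U=0$ termwise, it is a chain map that preserves the splitting $\bigoplus_{\epsilon_1,\epsilon_2} \bar{C}^{\epsilon_1\epsilon_2} \twoheadrightarrow \bigoplus_{\epsilon_1,\epsilon_2} \widehat{C}^{\epsilon_1\epsilon_2}$ at the level of underlying $\F$-vector spaces. So any cycle representing a generator of $HF^{-}$ that is constrained to a specified summand will, upon reduction, land in the same summand of $\widehat{C}$, and the only remaining task is to verify that its image is still a homologically nontrivial cycle.

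First I would invoke the L--space hypothesis: $HF^{-}(S^{3}_{d_1,d_2}(\L),\mathfrak{u}) \cong \F[U]$ contains no $U$--torsion, so the free part of $HF^{-}$ coincides with the whole group, and its generator is the generator from Corollary \ref{generation1}. Pick a cycle $x \in \bar{C}(\bH, \Lambda, \mathfrak{u})$ that represents this generator, with $x$ lying in $\bar{C}^{00}$, $\bar{C}^{11}$, or $\bar{C}^{01}\oplus \bar{C}^{10}$ according to whether we are in case (a), (b), or (c) of Corollary \ref{generation1}. Set $\hat{x} = \pi(x)$; by naturality of $\pi$, $\hat{x}$ is a cycle in the correspondingly labelled summand of $\widehat{C}(\Lambda,\mathfrak{u})$.

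The crux is to show $[\hat{x}] \neq 0$ in $\widehat{HF}(S^{3}_{d_1,d_2}(\L),\mathfrak{u})$. Suppose for contradiction that $\hat{x} = \partial \hat{y}$ in $\widehat{C}$. Lifting $\hat{y}$ to some $y \in \bar{C}$, one has $x - \partial y \in \ker \pi = U \cdot \bar{C}$, so $x = \partial y + Uz$ for some $z \in \bar{C}$. Passing to homology in $\bar{C}$ gives $[x] = U \cdot [z]$, i.e., $[x]$ is $U$--divisible in $HF^{-}$. This contradicts the fact that $[x]$ is a generator of the free module $\F[U]$. Hence $[\hat{x}] \neq 0$, and since $\widehat{HF}(S^{3}_{d_1,d_2}(\L),\mathfrak{u}) \cong \F$ (as $S^{3}_{d_1,d_2}(\L)$ is an L--space), $\hat{x}$ generates it.

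I do not expect a serious obstacle here: the only subtle point is the $U$--divisibility argument in the third step, which is standard but must be stated carefully because one has to lift a chain homotopy through the surjection $\pi$. Everything else is bookkeeping inherited from Corollary \ref{generation1} and the fact, recalled just before the corollary statement, that $\widehat{HF}(S^{3}_{\Lambda}(\L),\mathfrak{u})$ is computed by $\widehat{C}(\Lambda,\mathfrak{u}) = \bar{C}(\bH,\Lambda,\mathfrak{u})/U$.
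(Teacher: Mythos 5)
Your proposal is correct and follows essentially the same route as the paper: both reduce the generator of $HF^{-}(S^{3}_{d_{1},d_{2}}(\L),\mathfrak{u})\cong\F[U]$ supplied by Corollary \ref{generation1} modulo $U$ and observe it stays in the same summand. Your $U$--divisibility argument (that a bounding reduction would force $[x]=U[z]$, impossible for a generator of a free rank-one $\F[U]$-module) is a careful justification of the final step that the paper's proof asserts without comment, so it is a welcome addition rather than a departure.
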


\begin{proof}
If $S^{3}_{d_{1}, d_{2}}(\L)$ is an L--space,  $HF^{-}(S^{3}_{d_{1}, d_{2}}(\L), \mathfrak{u})\cong \F[U]$. Note that $\widehat{HF}(S^{3}_{d_{1}, d_{2}}(\L), \mathfrak{u})$ is obtained from $HF^{-}(S^{3}_{d_{1}, d_{2}}(\L), \mathfrak{u})\cong \F[U]$ by putting $U=0$. By Corollary \ref{generation1}, the tower $\F[U]$ is generated by a chain in $\bar{C}^{00}(\Lambda, \mathfrak{u})$, in $\bar{C}^{11}(\Lambda, \mathfrak{u})$ or in $\bar{C}^{01}(\Lambda, \mathfrak{u})\oplus \bar{C}^{01}(\Lambda, \mathfrak{u})$. Without loss of generality, we assume the chain is in $\bar{C}^{00}(\Lambda, \mathfrak{u})$, and it is written as:
$$z^{00}_{\bm{s}_{1}}+z^{00}_{\bm{s}_{2}}+\cdots+ U \bar{z}^{00}_{\mathfrak{u}}$$
where $\bar{z}^{00}_{\mathfrak{u}}$ is a chain in $\bar{C}^{00}(\Lambda, \mathfrak{u})$. By putting $U=0$, the generator becomes $z^{00}_{\bm{s}_{1}}+z^{00}_{\bm{s}_{2}}+\cdots$ which is a chain in $\widehat{C}^{00}(\Lambda, \mathfrak{u})$, and it generates $\widehat{HF}(S^{3}_{d_{1}, d_{2}}(\L), \mathfrak{u})$. 

\end{proof}

\section{L--space surgeries on 2-component L--space links}
In this section, we start with type (A) 2-component L--space links, and the proof of Theorem \ref{ppositive}. 

\subsection{Type (A) 2-component L--space links}
\label{typea}
Recall that a 2-component L--space link $\L=L_{1}\cup L_{2}$ is type (A) if there exists a lattice point $\bm{s}=(s_{1}, s_{2})\in \H(\L)$  such that $H_{\L}(\bm{s})>H_{\L}(s_{1}, s_{2}+1), H_{\L}(\bm{s})> H_{\L}(s_{1}+1, s_{2})$ and one of  $H_{\L}(\infty, s_{2}), H_{\L}(s_{1}, \infty)$ equals $0$. Otherwise, it is called a type (B) link. 
This lattice point is  a very good point in the convention of \cite{GN}, and they proved Theorem \ref{ppositive} for very good points. We give another method to prove it here. 

\medskip

\textbf{Proof of Theorem \ref{ppositive}:} By Theorem \ref{L-space link pro}, $L_{1}$ and $L_{2}$ are both L--space knots.  Then $H_{L_{i}}(s_{i})=0$ if and only if  $s_{i}\geq g_{i}$ for $i=1, 2$ where $g_{i}$ is the genus of $K_{i}$ by Lemma \ref{genus0}. Without loss  of generality, we assume $H_{\L}(s_{1}, \infty)=0$. By Lemma \ref{infinity},  $H_{\L}(s_{1}, \infty)=H_{L_{1}}(s_{1}-l/2)=0$. Then $s_{1}-l/2\geq g_{1}$ by Lemma \ref{genus0}, which indicates that $s_{1}\geq l/2+g_{1}$. 

Recall that $\Phi^{L_{1}}_{\bm{s}}=U^{H(s_{1}, s_{2})-H(\infty, s_{2})}$. By Lemma \ref{growth control}, $H(s_{1}, s_{2})>H(s_{1}+1, s_{2})\geq H(\infty, s_{2})$. So 
$$H(s_{1}, s_{2})-H(\infty, s_{2})>0.$$
 Let $U=0$. We have $\widehat{\Phi}^{L_{1}}_{\bm{s}}=0$.  Similarly, we can prove $\widehat{\Phi}^{L_{2}}_{\bm{s}}=0$. The map $\Phi^{-L_{1}}_{\bm{s}}$ equals $U^{H(-s_{1}, -s_{2})-H(\infty, -s_{2})}$. By Lemma \ref{symmetry3}, 
$$H(-s_{1}, -s_{2})-H(\infty, -s_{2})=H(s_{1}, s_{2})-H(\infty, s_{2}+l)+s_{1}-l/2.$$
In this paper, we orient the link $\L$ so that the linking number $l$ is nonnegative. Then $H(s_{1}, s_{2})>H(\infty, s_{2})\geq H(\infty, s_{2}+l)$ by Lemma \ref{growth control}. Combining with $s_{1}\geq l/2+g_{1}$, we have 
$$H(-s_{1}, -s_{2})-H(\infty, -s_{2})>0. $$ 
 Hence, $\widehat{\Phi}^{-L_{1}}_{\bm{s}}=0$. For the map $\Phi^{-L_{2}}_{\bm{s}}=U^{H(-s_{1}, -s_{2})-H(-s_{1}, \infty)}$, we have
$$H(-s_{1}, -s_{2})-H(-s_{1}, \infty)=H(s_{1}, s_{2})-H(s_{1}+l, \infty)+s_{2}-l/2.$$
Similarly, we have $H(s_{1}+l, \infty)\leq H( s_{1}, \infty)=0$ and 
$$H(s_{1}, s_{2})+s_{2}-l/2\geq H( \infty, s_{2})+1+s_{2}-l/2=H_{2}(s_{2}-l/2)+1+s_{2}-l/2=H_{2}(l/2-s_{2})+1>0.$$
Hence, $\widehat{\Phi}^{-L_{2}}_{\bm{s}}=0$. Then $\widehat{\A}^{00}_{\bm{s}}$ is the generator of $\widehat{HF}(S^{3}_{d_{1}, d_{2}}(\L), \mathfrak{s})$. By Corollary \ref{generation2}, this is possible only when $d_{1}>0, d_{2}>0$ and $\det \Lambda>0$.   \qed

\medskip

\begin{lemma}
\label{boundb}
For a 2-component L--space link $\L=L_{1}\cup L_{2}$ with linking number $l$, $b_{i}(\L)\geq g_{i}-1+l/2$ where $g_{i}$ is the genus of $L_{i}$ and $i=1, 2$. 
\end{lemma}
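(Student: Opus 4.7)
The plan is to unpack the definition of $b_1(\L)$ and exploit the boundary behavior of the $H$-function as $s_2\to \infty$. Since every sublink of an L--space link is an L--space link (Theorem \ref{L-space link pro}), both $L_1$ and $L_2$ are L--space knots, so Lemma \ref{genus0} applies to each of them. By symmetry it suffices to treat the case $i=1$.

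Let $s_1\in\H_1(\L)=\Z+l/2$ be any value realizing the condition in the definition \eqref{defineb}, i.e.\ $H_{\L}(s_1,s_2)=H_{\L}(\infty,s_2)$ for all $s_2$. The key observation is to set $s_2=\infty$: this gives
\[
H_{\L}(s_1,\infty) \;=\; H_{\L}(\infty,\infty) \;=\; 0,
\]
using the convention $H_{\L}(\infty,\infty)=0$. By Lemma \ref{infinity}, the left hand side is $H_{L_1}(s_1-l/2)$, so $H_{L_1}(s_1-l/2)=0$. Since $L_1$ is an L--space knot, Lemma \ref{genus0} forces
\[
s_1-\tfrac{l}{2} \;\geq\; g_1, \qquad \text{hence} \qquad s_1-1 \;\geq\; g_1-1+\tfrac{l}{2}.
\]

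Taking the ceiling of both sides gives $\lceil s_1-1\rceil \geq g_1-1+l/2$ (note the right hand side is an integer when $l$ is even and a half-integer when $l$ is odd, and in the latter case $s_1-1$ is itself a half-integer, so ceiling only improves the bound). Since this holds for every admissible $s_1$, passing to the minimum yields $b_1(\L)\geq g_1-1+l/2$. The argument for $b_2(\L)\geq g_2-1+l/2$ is identical, reversing the roles of the two components. There is no real obstacle here; the only point to check carefully is the ceiling/parity bookkeeping, which is benign since $g_1+l/2$ is always at least as large as any half-integer threshold we need to clear.
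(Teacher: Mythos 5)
Your proof is correct and follows essentially the same route as the paper: both arguments take $s_2\to\infty$ in the defining condition for $b_1$, apply Lemma \ref{infinity} to identify $H_{\L}(s_1,\infty)$ with $H_{L_1}(s_1-l/2)$, and then invoke Lemma \ref{genus0} to get $s_1-l/2\geq g_1$ before handling the ceiling/parity bookkeeping. The only cosmetic difference is that the paper substitutes $s_1=b_1+1$ or $s_1=b_1+1/2$ according to the parity of $l$, whereas you minimize over all admissible $s_1$ directly; these are equivalent.
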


\begin{proof}
By definition \ref{def:b}, $b_{1}=\min\{ \lceil s_{1}-1 \rceil \mid H(s_{1}, s_{2})=H(\infty, s_{2}) \textup{ for all } s_{2} \}$. Then $H(s_{1}, \infty)=H(\infty, \infty)=0$ for all $s_{1}>b_{1}$. By Lemma \ref{infinity}, $H(s_{1}, \infty)=H_{L_{1}}(s_{1}-l/2)=0$ for all $s_{1}>b_{1}$. Then Lemma \ref{genus0} implies that $s_{1}-l/2\geq g_{1}$. If the linking number $l$ is even, then $s_{1}\in \Z$. By replacing $s_{1}$ by $b_{1}+1$, we have
$$b_{1}\geq g_{1}-1+l/2 .$$
If $l$ is odd, we let $s_{1}=b_{1}+1/2$. Then 
$$b_{1}\geq g_{1}-1/2+l/2.$$
In both cases, we have $b_{1}\geq g_{1}-1+l/2$. The argument for $b_{2}$ is similar.  

\end{proof}

\begin{corollary}
\label{positive surgery 2}
Let $\L=L_{1}\cup L_{2}$ be an L--space link. Suppose $b_{i}\geq g_{i}+l/2$ for $i=1$ or $2$.  If $S^{3}_{d_{1}, d_{2}}(\L)$ is an L--space, then $d_{1}>0, d_{2}>0$ and $\det \Lambda>0$. 
\end{corollary}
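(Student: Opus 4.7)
The plan is to reduce directly to Theorem \ref{ppositive} by showing that the hypothesis $b_i \geq g_i + l/2$ forces $\L$ to be a type (A) L--space link. Without loss of generality, assume $b_1 \geq g_1 + l/2$, and I aim to exhibit a lattice point $\bm{s} = (s_1, s_2) \in \H(\L)$ with $H(\bm{s}) > H(s_1+1, s_2)$, $H(\bm{s}) > H(s_1, s_2+1)$, and $H(s_1, \infty) = 0$.

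First, I would choose $s_1 = g_1 + l/2$, which is a valid element of $\H_1(\L) = \Z + l/2$ regardless of the parity of $l$. By Lemma \ref{infinity} and Lemma \ref{genus0} applied to the L--space knot $L_1$,
\[
H(s_1, \infty) = H_{L_1}(s_1 - l/2) = H_{L_1}(g_1) = 0,
\]
giving one of the required ``infinity'' conditions.

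Next, I would produce $s_2$. Since $b_1 \geq g_1 + l/2 = s_1$, the value $s_1$ fails to qualify in the set defining $b_1$ in \eqref{defineb}, so there exists $s_2^* \in \H_2(\L)$ with $H(s_1, s_2^*) > H(\infty, s_2^*) \geq 0$, hence $H(s_1, s_2^*) \geq 1$. Because $H(s_1, \cdot)$ is non-increasing by Lemma \ref{growth control} and tends to $H(s_1, \infty) = 0$, there is a largest $s_2 \in \H_2(\L)$ for which $H(s_1, s_2) > H(s_1, s_2 + 1)$. At this $s_2$ we automatically have $H(s_1, s_2 + 1) = H(s_1, \infty) = 0$ by maximality and monotonicity, and $H(s_1, s_2) \geq 1$.

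Finally, I would verify the remaining strict inequality $H(s_1, s_2) > H(s_1 + 1, s_2)$. By Lemma \ref{growth control} and Lemma \ref{genus0},
\[
H(s_1 + 1, s_2) \leq H(s_1 + 1, \infty) = H_{L_1}(g_1 + 1) = 0,
\]
while $H(s_1, s_2) \geq 1$, so the inequality is strict. Thus $(s_1, s_2)$ witnesses that $\L$ is type (A), and Theorem \ref{ppositive} yields $d_1 > 0$, $d_2 > 0$ and $\det \Lambda > 0$. The only subtle point is bookkeeping of the lattice shifts $l/2$ (and handling odd $l$), but once $s_1 = g_1 + l/2$ is chosen the rest is forced by the monotonicity and genus-detection lemmas, so I expect no real obstacle.
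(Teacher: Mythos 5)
Your overall strategy --- show that $b_1\geq g_1+l/2$ forces $\L$ to be type (A) and then invoke Theorem \ref{ppositive} --- is exactly the paper's strategy, and your first two steps (producing $s_1$ with $H(s_1,\infty)=0$ and a row where the column $H(s_1,\cdot)$ drops to $0$) are fine. But the last step contains a genuine error: you claim $H(s_1+1,s_2)\leq H(s_1+1,\infty)=0$, and this inequality is backwards. By Lemma \ref{growth control} the $H$-function is non-increasing as either coordinate increases, so for any finite $s_2$ one has $H(s_1+1,s_2)\geq H(s_1+1,\infty)$, not $\leq$. What your setup actually yields at your chosen point is $H(s_1,s_2)=1$, $H(s_1,s_2+1)=H(s_1+1,s_2+1)=0$, and $H(s_1+1,s_2)\in\{0,1\}$; nothing rules out $H(s_1+1,s_2)=1$, in which case the horizontal strict inequality $H(s_1,s_2)>H(s_1+1,s_2)$ fails and $(s_1,s_2)$ is not a type (A) witness. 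The problem is that you fixed the column $s_1=g_1+l/2$, which may be strictly smaller than $b_1$, and chose $s_2$ as the last row where that single column drops; that choice controls the vertical jump but says nothing about the horizontal one.

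The paper avoids this by working in the column $s_1=b_1$ (or $b_1-1/2$ when $l$ is odd), where $H(s_1,\infty)=0$ still holds since $b_1-l/2\geq g_1$, and by taking $y$ to be the largest row where the columns $b_1$ and $b_1+1$ \emph{differ} (such $y$ exists by the minimality in Definition \ref{def:b}). Setting $a=H(b_1,y+1)=H(b_1+1,y+1)$, Lemma \ref{growth control} forces $H(b_1+1,y)=a$ and $H(b_1,y)=a+1$ (the alternative $H(b_1+1,y)=a+1$ would give $H(b_1,y)=a+2$, contradicting the unit-step property in the vertical direction), so both strict inequalities hold at $(b_1,y)$. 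Your argument can be repaired by making this choice of witness point instead; as written, it has a gap.
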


\begin{proof}
Without loss of generality, we assume that $b_{1}\geq g_{1}+l/2$. Suppose that the linking number $l$ is even. Then $(b_{1}, \infty)\in \H(\L)$, and $H(b_{1}, \infty)=H_{1}(b_{1}-l/2)=0$.  By the definition of $b_{1}$ ( see Definition \ref{def:b}), there exists $y$ such that
$$H(b_{1}, s_{2})=H(b_{1}+1, s_{2}) , \textup{ and } H(b_{1}, y)\neq H(b_{1}+1, y),$$
for all $s_{2}>y$.  Assume that $H(b_{1}, y+1)=H(b_{1}+1, y+1)=a$ for some $a\geq 0$. Then $H(b_{1}+1, y)=a$ or $a+1$. If it equals $a+1$, then $H(b_{1}, y)=a+2$ which contradicts to Lemma \ref{growth control}. Hence,  
$$H(b_{1}, y)=a+1, H(b_{1}, y+1)=H(b_{1}+1, y)=H(b_{1}+1, y+1)= a.$$
Then  the link $\L$ is type (A). By Theorem \ref{ppositive}, if $S^{3}_{d_{1}, d_{2}}(\L)$ is an L--space, $d_{1}>0, d_{2}>0$  and $\det \Lambda>0$. 

If $l$ is odd, then $(b_{1}-1/2, \infty)\in \H(\L)$ and $b_{1}\geq g_{1}+l/2+1/2$. So $H(b_{1}-1/2, \infty)=H_{1}(b_{1}-1/2-l/2)=0$. The rest of the argument is the same as the one in  the case that $l$ is even. 
\end{proof}

\begin{proposition}
\label{char of typeA}
A 2-component L--space link $\L$ is type (A) if and only if $b_{i}\geq g_{i}+l/2$ for $i=1$ or $2$.
\end{proposition}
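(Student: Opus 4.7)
My plan is to prove the two implications separately, leaning heavily on Lemma \ref{genus0}, Lemma \ref{infinity}, Lemma \ref{growth control}, and Definition \ref{def:b}.

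For the ``if'' direction, the argument is already embedded in the proof of Corollary \ref{positive surgery 2}. Assume without loss of generality that $b_{1}\geq g_{1}+l/2$. Take $s_{1}=b_{1}+1$ if $l$ is even and $s_{1}=b_{1}+1/2$ if $l$ is odd, so that $s_{1}\in \H_{1}(\L)$ is the smallest value for which $H_{\L}(s_{1},s_{2})=H_{\L}(\infty,s_{2})$ holds for all $s_{2}$. Applied at $s_{1}-1$ (which still satisfies $s_{1}-1\geq g_{1}+l/2-1/2$, hence $H_{\L}(s_{1}-1,\infty)=H_{L_{1}}(s_{1}-1-l/2)=0$ by Lemma \ref{genus0} and Lemma \ref{infinity}), the failure of the equality at some $s_{2}=y$ combined with the monotonicity of Lemma \ref{growth control} produces a lattice point $\bm{s}=(s_{1}-1,y)$ with $H_{\L}(\bm{s})>H_{\L}(s_{1},y)$ and $H_{\L}(\bm{s})>H_{\L}(s_{1}-1,y+1)$ (this is exactly the computation $H(b_{1},y)=a+1$, $H(b_{1},y+1)=H(b_{1}+1,y)=H(b_{1}+1,y+1)=a$ appearing in the proof of Corollary \ref{positive surgery 2}). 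This $\bm{s}$ witnesses type (A).

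For the ``only if'' direction, let $\bm{s}=(s_{1},s_{2})$ be a lattice point witnessing that $\L$ is type (A), and assume without loss of generality $H_{\L}(s_{1},\infty)=0$. By Lemma \ref{infinity}, $H_{L_{1}}(s_{1}-l/2)=0$, and then Lemma \ref{genus0} yields $s_{1}-l/2\geq g_{1}$, i.e. $s_{1}\geq g_{1}+l/2$. The condition $H_{\L}(s_{1},s_{2})>H_{\L}(s_{1}+1,s_{2})$ together with the monotonicity in the first coordinate given by Lemma \ref{growth control} implies
\[
H_{\L}(s_{1},s_{2})>H_{\L}(s_{1}+1,s_{2})\geq H_{\L}(s_{1}+2,s_{2})\geq \cdots \geq H_{\L}(\infty,s_{2}),
\]
so $s_{1}$ fails to belong to the set $\{s_{1}':H_{\L}(s_{1}',s_{2}')=H_{\L}(\infty,s_{2}')\text{ for all }s_{2}'\}$ used in Definition \ref{def:b}. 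Because this set is upward closed in $\H_{1}(\L)$, its minimum $s_{1}^{*}$ satisfies $s_{1}^{*}\geq s_{1}+1\geq g_{1}+l/2+1$, so $b_{1}=\lceil s_{1}^{*}-1\rceil \geq g_{1}+l/2$; the parities of $l$ are handled exactly as in the proof of Lemma \ref{boundb}.

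No serious obstacle is expected: both directions follow by direct manipulation of the already-established structural lemmas, and the main care is only in the bookkeeping of the ceiling and the parity of the linking number. The ``if'' direction essentially repackages the first half of Corollary \ref{positive surgery 2}'s proof, while the ``only if'' direction simply reverses that argument by reading the strict inequality at $\bm{s}$ as an obstruction to $s_{1}$ lying in the defining set for $b_{1}$.
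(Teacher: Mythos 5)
Your proof is correct and follows essentially the same route as the paper: the ``if'' direction is exactly the computation extracted from the proof of Corollary \ref{positive surgery 2}, and the ``only if'' direction is the paper's argument that the witnessing point forces $s_{1}\leq b_{1}$ via Lemma \ref{growth control} and then $s_{1}\geq g_{1}+l/2$ via Lemmas \ref{infinity} and \ref{genus0}. The only difference is that you spell out the upward-closedness of the defining set and the parity bookkeeping, which the paper leaves implicit.
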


\begin{proof}
 The ``if " part can be seen from the proof of Corollary \ref{positive surgery 2}. For the ``only if " part, we assume that the link is type (A). Then  there exists a lattice point $(s_{1}, s_{2})\in \H(\L)$ such that $H(s_{1}, \infty)=0$, $H(s_{1}, s_{2})>H(s_{1}+1, s_{2})$ and $H(s_{1}, s_{2})>H(s_{1}, s_{2}+1)$. So $s_{1}\leq b_{1}$. Note that $H(s_{1}, \infty)=H_{1}(s_{1}-l/2)=0$. By Lemma \ref{genus0}, we have $s_{1}-l/2\geq g_{1}$. This implies $b_{1}\geq g_{1}+l/2$.

\end{proof}

\begin{definition}
For a 2-component L--space link $\L=L_{1}\cup L_{2}$,  a lattice point $(s_{1}, s_{2})\in \H(\L)$ is called  a maximal lattice point if $H(s_{1}, s_{2})=1, H(s_{1}+1, s_{2})=H(s_{1}, s_{2}+1)=0$. 
\end{definition}

We refer the readers to \cite{Liu2} for more details about maximal lattice points. If there exists a maximal lattice point $(s_{1}, s_{2})$ for $\L$, then $b_{i}\geq s_{i}$ for $i=1, 2$. Note that $H(s_{1}, \infty)\leq H(s_{1}, s_{2}+1)=0$. Then $H(s_{1}, \infty)=H_{1}(s_{1}-l/2)=0$, which indicates that $s_{1}\geq g_{1}+l/2$. By Proposition \ref{char of typeA},  $\L$ is type (A).

\begin{lemma}
\label{knot L--space}
Assume that $\L=L_{1}\cup L_{2}$ is a type (A) L--space link. If $S^{3}_{d_{1}, d_{2}}(\L)$ is an L--space, then either $S^{3}_{d_{1}}(L_{1})$ or $S^{3}_{d_{2}}(L_{2})$ is an L--space. 

\end{lemma}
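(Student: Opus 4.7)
The plan is to argue by contradiction: since sublinks of L--space links are L--space links (Theorem~\ref{L-space link pro}(a)), $L_1$ and $L_2$ are themselves L--space knots, and $S^{3}_{d_i}(L_i)$ is an L--space exactly when $d_i \geq 2g_i-1$. So assume $d_i \leq 2g_i-2$ for both $i = 1, 2$ and derive a contradiction. Theorem~\ref{ppositive} already gives $d_1, d_2 > 0$ and $\det \Lambda > 0$, placing us in Case~1 of Section~\ref{perturbed}. By Corollary~\ref{generation2}(a), in every Spin$^{c}$--structure $\mathfrak{u}$ the generator of $\widehat{HF}(S^{3}_{d_1, d_2}(\L), \mathfrak{u})$ is represented by a chain in $\widehat{C}^{00}(\Lambda, \mathfrak{u})$.

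By Proposition~\ref{char of typeA} and the type (A) hypothesis, after swapping $L_1$ and $L_2$ if necessary, there is a lattice point $\bm{s}_0 = (s_1, s_2) \in \H(\L)$ with $H(s_1, \infty) = 0$, $H(\bm{s}_0) > H(s_1+1, s_2)$, and $H(\bm{s}_0) > H(s_1, s_2+1)$; then $s_1 \geq g_1 + l/2$ by Lemma~\ref{infinity} and Lemma~\ref{genus0}. Running the computation from the proof of Theorem~\ref{ppositive} verbatim, all four reduced edge maps $\widehat{\Phi}^{\pm L_1}_{\bm{s}_0}$ and $\widehat{\Phi}^{\pm L_2}_{\bm{s}_0}$ vanish, so the class of $\widehat{\A}^{00}_{\bm{s}_0}$ is a nonzero cycle in $\widehat{C}^{00}(\Lambda, [\bm{s}_0])$.

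I would then produce a second, $\F$--linearly independent cycle in the same Spin$^{c}$ class $[\bm{s}_0]$ by using the $H$--function symmetry $H(-\bm{s}) = H(\bm{s}) + s_1 + s_2$ from Lemma~\ref{symmetry2}, together with Corollary~\ref{symmetry3}, to locate a reflected lattice point $\bm{s}_0' \in [\bm{s}_0]$ inside the truncation parallelogram but distinct from $\bm{s}_0$ modulo $H(\L, \Lambda)$, at which all four reduced edge maps again vanish. The bounds $s_1 \geq g_1 + l/2$ and $d_i \leq 2g_i - 2$ are precisely what is needed to separate $\bm{s}_0$ and $\bm{s}_0'$ in the Spin$^{c}$ orbit. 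Existence of two such independent cycles in $\widehat{C}^{00}(\Lambda, [\bm{s}_0])$ forces $\dim \widehat{HF}(S^{3}_{d_1, d_2}(\L), [\bm{s}_0]) \geq 2$ via the CW--complex picture of Section~\ref{location}, contradicting the L--space assumption.

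The main obstacle is the construction of the reflected point $\bm{s}_0'$ and verifying that the edge--map vanishing conditions actually hold there under the constraints $d_i \leq 2g_i - 2$. This requires careful bookkeeping of how $H(\bm{s})$ compares to $H(\bm{s} \pm \bm{e}_i)$ and to $H(s_1, \infty), H(\infty, s_2)$ at the symmetric point, combining Lemma~\ref{growth control} with the observation that $L_i$ and $-L_i$ share their symmetrized Alexander polynomials, so that the genus bounds transfer to both sides of the symmetry.
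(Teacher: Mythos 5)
Your setup (contradiction via $d_i\le 2g_i-2$, Theorem \ref{ppositive} to get $d_1,d_2>0$ and $\det\Lambda>0$, Corollary \ref{generation2}(a) to force the generator into $\widehat{C}^{00}$) is sound, and the first cycle you produce at the type (A) point $\bm{s}_0$ is indeed a cycle by the computation in Theorem \ref{ppositive}. But a single cycle in $\widehat{C}^{00}$ is perfectly consistent with the L--space condition, so everything rests on your second, independent cycle at a ``reflected'' point $\bm{s}_0'$ --- and that is precisely the step you leave unproven. The conjugation symmetry $H(-\bm{s})=H(\bm{s})+s_1+s_2$ relates $\bm{s}_0$ to $-\bm{s}_0$, but $-\bm{s}_0$ lies in the Spin$^c$ class $[\bm{s}_0]$ only when $2\bm{s}_0$ belongs to the lattice spanned by $\Lambda_1,\Lambda_2$, which fails in general; for any other candidate $\bm{s}_0+m\Lambda_1+n\Lambda_2$ you must verify vanishing of all four edge maps out of $\A^{00}$, which involves the full two-variable $H$-function of $\L$ at points over which the hypotheses $d_i\le 2g_i-2$ give you no direct control. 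As written, the decisive step is a gap, not bookkeeping.

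The paper sidesteps this entirely by working at the opposite corner of the surgery square. After disposing of the trivial case where some $L_i$ is an unknot, it takes the single lattice point $\bm{s}=(g_1-1+l/2,\,g_2-1+l/2)$ and examines the \emph{incoming} maps of the $0$-cell $\A^{11}_{\bm{s}}$, namely $\Phi^{L_i}_{s_i}$ and $\Phi^{-L_i}_{s_i-d_i}$, whose $U$-exponents are $H_i(g_i-1)$ and $H_i(d_i-g_i+1)$ --- these depend only on the $H$-functions of the knots $L_1,L_2$, not of the link. Under $d_i\le 2g_i-2$ one has $d_i-g_i+1\le g_i-1<g_i$, so by Lemma \ref{genus0} all four exponents are positive and the hatted maps vanish; the $0$-cell $\widehat{\A}^{11}_{\bm{s}}$ is then automatically a cycle and cannot be a boundary, so it represents a nonzero class. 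Such a class cannot be the generator guaranteed by Corollary \ref{generation2}(a), which must be a chain in $\widehat{C}^{00}$, and this single class already gives the contradiction --- no second cycle is needed. I would redirect your argument from the $2$-cells to the $0$-cells; that also removes the need for the symmetry lemmas altogether.
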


\begin{proof}
By Theorem \ref{ppositive}, if $S^{3}_{d_{1}, d_{2}}(\L)$ is an L--space, then $d_{1}>0, d_{2}>0$ and $\det\Lambda>0$. If $L_{1}$ or $L_{2}$ is an unknot, the statement is clear. Now we assume that both $L_{1}$ and $L_{2}$ are not unknots with genera $g_{1}, g_{2}\geq 1$. Suppose that $S^{3}_{d_{1}}(L_{1})$ and $S^{3}_{d_{2}}(L_{2})$ are not L--spaces. Then $d_{1}\leq 2g_{1}-2$ and $d_{2}\leq 2g_{2}-2$. Pick the lattice point $\bm{s}=(g_{1}-1+l/2, g_{2}-1+l/2)\in \H(\L)$. Note that 
$$\Phi^{L_{1}}_{s_{1}}=U^{H(g_{1}-1+l/2, \infty)}=U^{H_{1}(g_{1}-1)}, \quad \Phi^{L_{2}}_{s_{2}}=U^{H_{2}(g_{2}-1)},$$
$$\Phi^{-L_{1}}_{s_{1}-d_{1}}=U^{H_{1}(d_{1}-g_{1}+1)}, \quad \Phi^{-L_{2}}_{s_{2}-d_{2}}=U^{H_{2}(d_{2}-g_{2}+1)}.$$

Since $d_{i}\leq 2g_{i}-2$ for $i=1, 2$, $d_{i}-g_{i}+1\leq g_{i}-1$. Hence, 
$$H_{i}(g_{i}-1)>0, \quad H_{i}(d_{i}-g_{i}+1)>0,$$
for $i=1, 2$. Therefore, $\widehat{\Phi}^{L_{i}}_{s_{i}}=\widehat{\Phi}^{-L_{i}}_{s_{i}-d_{i}}=0$ for $i=1, 2$. This indicates that $\widehat{\A}^{11}_{s_{1}, s_{2}}$ is the generator  of $HF^{-}(S^{3}_{d_{1}, d_{2}}(\L), \bm{s})$ which is a chain in $\widehat{C}^{11}(\Lambda, \bm{s})$ (see Figure \ref{surgery map}). However, we have $d_{1}>0, d_{2}>0$ and $\det \Lambda>0$. By Corollary \ref{generation2},  $\widehat{HF}(S^{3}_{d_{1}, d_{2}}(\L), \bm{s})$ is generated by a chain in $\widehat{C}^{00}(\Lambda, \bm{s})$. So we get a contradiction, and either $S^{3}_{d_{1}}(L_{1})$ or $S^{3}_{d_{2}}(L_{2})$ is an L--space. 

\end{proof}

\subsection{Type (B) 2-component L--space links}
\label{typeb}
In this section, $\L=L_{1}\cup L_{2}$ denotes a type (B) L--space link. We start with large  surgeries (i.e $d_{i}\gg 0$ or $ d_{i}\ll0$) on $\L$.

\begin{lemma}
\label{2unknots}
Let $\L=L_{1}\cup L_{2}$ be a type (B) L--space link. If $S^{3}_{d_{1}, d_{2}}(\L)$ is an L--space with $d_{1}< -2b_{1}-l,  d_{2}<-2b_{2}-l$, then both $L_{1}$ and $L_{2}$ are unknots. 

\end{lemma}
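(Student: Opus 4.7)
The plan is to analyze the truncated perturbed surgery complex of Section \ref{perturbed} in Case 2. Because $b_i\geq g_i-1+l/2$ by Lemma \ref{boundb}, the hypothesis $d_i<-2b_i-l$ forces $d_1, d_2<0$ and $\det\Lambda=d_1 d_2-l^2>0$ (as soon as one of $g_1, g_2$ or $l$ is positive, so that $2b_i+l>0$), placing us in Case 2. By Corollary \ref{generation2}(b), for each $\textup{Spin}^c$-structure $\mathfrak u$ the rank-one tower $\widehat{HF}(S^3_{d_1,d_2}(\L),\mathfrak u)\cong\F$ is represented by a chain of 0-cells in $\widehat{C}^{11}(\Lambda,\mathfrak u)$.

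I would then determine which 0-cells $\widehat{\A}^{11}_{\bm s^*}$ escape being boundaries of 1-cells. Combining the perturbed edge maps \eqref{maps2} with Lemma \ref{genus0}, at $U=0$ the four arrows entering $\widehat{\A}^{11}_{\bm s^*}$ evaluate as $\widehat\Phi^{L_i}_{s_i^*}=1$ iff $s_i^*\geq g_i+l/2$ and $\widehat\Phi^{-L_i}_{s_i^*-d_i}=1$ iff $s_i^*\leq l/2-g_i+d_i$. Hence $\widehat{\A}^{11}_{\bm s^*}$ is not in the image of any 1-cell precisely when
\[
l/2-g_i+d_i<s_i^*<g_i+l/2\qquad (i=1,2),
\]
and each such 0-cell represents a non-trivial class in $\widehat{HF}(S^3_{d_1,d_2}(\L),\mathfrak u)$.

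Assume for contradiction that $L_1$ is not an unknot, so $g_1\geq 1$; using that $\L$ is type (B), Proposition \ref{char of typeA} pins down $b_1 = g_1-1+\lceil l/2\rceil$. I would count the surviving 0-cells in the open rectangle above and compare with the L-space total rank $|\det\Lambda|=d_1d_2-l^2$. A direct expansion yields
\[
(2g_1-d_1)(2g_2-d_2)-(d_1d_2-l^2) \;=\; 4g_1g_2-2g_1d_2-2g_2d_1+l^2,
\]
which is strictly positive under $d_i<-2b_i-l<0$ whenever $g_1\geq 1$. Crucially, in the range $|s_1^*-l/2|<g_1$ both $\widehat\Phi^{L_1}_{s_1^*}$ and $\widehat\Phi^{-L_1}_{s_1^*}$ vanish, so no single 1-cell in $\widehat{C}^{01}$ can cobound two distinct surviving 0-cells with different $s_1^*$ in this range; the same obstruction applies to chained 1-cells once their endpoints are tracked. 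The resulting surplus of independent classes in $\widehat{HF}$ contradicts the L-space rank-one hypothesis, forcing $g_1=0$. The symmetric argument on $L_2$ gives $g_2=0$, so both $L_1$ and $L_2$ are unknots.

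The main obstacle will be making the cell-count fully rigorous: one needs to match the surviving 0-cells inside the Case 2 truncation polygon $Q$ against the actual generators of $\widehat{HF}$ via Theorem \ref{cellhom}, carefully handle the affine lattice shift $l/2$ in $\mathcal H(\L)$, and verify that no longer 1-chain in $\widehat{C}^{10}\oplus\widehat{C}^{01}$ routed through cells outside the ``bad'' window $|s_i-l/2|<g_i$ can identify two surviving 0-cells. I expect that combining the window analysis above with the explicit shape of the truncation in Case 2 of Figure \ref{sides} will make this bookkeeping manageable.
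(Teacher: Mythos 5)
Your setup is sound: placing the surgery in Case 2 via Lemma \ref{boundb}, invoking Corollary \ref{generation2}(b), and characterizing via \eqref{maps2} and Lemma \ref{genus0} exactly which $0$-cells $\widehat{\A}^{11}_{\bm s^*}$ receive no nonzero arrow at $U=0$ (the open window $l/2+d_i-g_i<s_i^*<g_i+l/2$) are all correct, and such cells do give linearly independent classes in $\widehat{HF}$ since they can never appear in any boundary. The gap is in the count. The open window in the $i$-th coordinate contains $2g_i-d_i-1$ points of the affine lattice $\H_i(\L)$, not $2g_i-d_i$, so the number of surviving $0$-cells is $(2g_1-d_1-1)(2g_2-d_2-1)$, and the surplus over the number $|\det\Lambda|=d_1d_2-l^2$ of Spin$^c$ structures is
\[
(2g_1-1)(2g_2-1)-(2g_1-1)d_2-(2g_2-1)d_1+l^2,
\]
not the expression you wrote. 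This is not always positive: take $g_1=1$, $g_2=0$, $l=0$, so the surplus is $d_1-d_2-1$, which is negative whenever $d_1\ll d_2<0$. So precisely in the case you must rule out (one component knotted, the other unknotted, coefficients very unbalanced) the pigeonhole yields nothing, and your argument cannot conclude that $L_1$ is an unknot. The positivity of your displayed quantity $4g_1g_2-2g_1d_2-2g_2d_1+l^2$ is an artifact of the overcount. Note also that a Spin$^c$ structure containing exactly one or zero surviving $0$-cells produces no contradiction with the L--space hypothesis, so a global pigeonhole really is the only way your strategy closes, and it does not.

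The paper avoids counting altogether: it works in the single Spin$^c$ structure of $\bm s=(-l/2,\,l/2+1)$, where the hypotheses $d_i<-2b_i-l$ let one truncate to a single square, and shows that if $g_1>0$ then the $1$-cell $\A^{01}_{\bm s+\Lambda_2}$ is a cycle (both outgoing maps are $U^{H_1(0)}$ with $H_1(0)>0$) that is not a boundary (the incoming map $\Phi^{-L_2}_{\bm s}$ has positive $U$-exponent because $s_2-l/2=1$). This puts a generator of $\widehat{HF}$ in $\widehat{C}^{01}$, contradicting Corollary \ref{generation2}(b). If you want to salvage your route, you would similarly need to exhibit, in some one Spin$^c$ structure, either two surviving $0$-cells or a surviving cell of positive dimension, rather than relying on a global count.
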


\begin{figure}[H]
\centering
\includegraphics[width=2.0in]{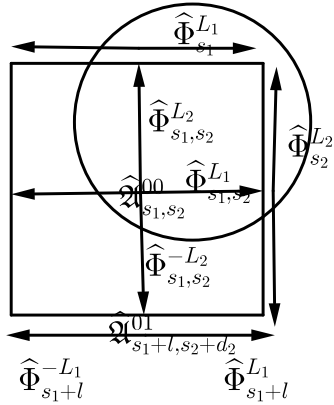}
\caption{Truncated surgery complex  \label{surgery}}
\end{figure} 

\begin{proof}
Pick the lattice point $\bm{s}=(-l/2, l/2+1)\in \H(\L)$. Since $d_{1}<-2b_{1}-l, d_{2}<-2b_{2}-l$,  we can choose the parallelogram $Q$  in the plane such that the point $\bm{s}$ is in $Q$, and other lattice points in the same Spin$^{c}$ structure are outside of $Q$. Then after truncation, in this  particular Spin$^{c}$ structure, we have the  parallelogram in Figure \ref{surgery} for the truncated surgery complex.

Note that 
$$\Phi^{-L_{2}}_{\bm{s}}=U^{H(\bm{-s})-H(-s_{1}, \infty)}=U^{H(\bm{s})-H(s_{1}+l, \infty)+s_{2}-l/2}.$$
Here $H(s_{1}, s_{2})\geq H(s_{1}+l, s_{2})\geq H(s_{1}+l, \infty)$ by Lemma \ref{growth control} and $s_{2}-l/2=1$. Then $\widehat{\Phi}^{-L_{2}}_{\bm{s}}=0$. We also have 
$$\Phi^{L_{1}}_{s_{1}+l}=U^{H_{1}(s_{1}+l/2)}=U^{H_{1}(0)}, \quad  \Phi^{-L_{1}}_{s_{1}+l}=U^{H(0)}.$$
If $L_{1}$ is not an unknot, then $H_{1}(0)\neq 0$. Let $U=0$, we have $\widehat{\Phi}^{\pm L_{1}}_{s_{1}+l}=0$. Hence $\widehat{\A}^{01}_{\bm{s}+\Lambda_{2}}$ generates $\widehat{HF}(S^{3}_{d_{1}, d_{2}}(\L), \bm{s})$ which is a chain in $\widehat{C}^{01}(\Lambda, \bm{s})$.  However, note  that $d_{1}<0, d_{2}<0, \det \Lambda>0$. By Corollary \ref{generation2},   $\widehat{HF}(S^{3}_{d_{1}, d_{2}}(\L), \bm{s})$ is generated by a chain in $\widehat{C}^{11}(\Lambda, \bm{s})$, which is a contradiction.  Hence $L_{1}$ is an unknot. Similarly, we can  prove that $L_{2}$ is also an unknot.

\end{proof}

\begin{proposition}
\label{unknot component2}
Let $\L=L_{1}\cup L_{2}$ be a type (B) L--space link. If $S^{3}_{d_{1}, d_{2}}(\L)$ is an L--space with $d_{1}>2b_{1}+l, d_{2}<-2b_{2}-l$, then $L_{2}$ is an unknot.  

\end{proposition}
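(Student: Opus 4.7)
The plan is to adapt the strategy of Lemma \ref{2unknots}: assume for contradiction that $L_{2}$ is knotted, so that $H_{L_{2}}(0)\geq 1$ by Lemma \ref{genus0}, and exhibit a single Spin$^{c}$-structure in which the truncated perturbed complex has $\F$-rank strictly greater than $1$, contradicting the L-space hypothesis. Since $d_{1}>0>d_{2}$, we are in Case 3 of the truncation scheme of Section \ref{perturbed}, so by Corollary \ref{generation2}(c) the generator of $\widehat{HF}$ in each Spin$^{c}$-class is expected to lie in $\widehat{C}^{01}\oplus\widehat{C}^{10}$.

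The key lattice point to examine is $\bm{s}=(s_{1}^{\ast},\,l/2)\in \H(\L)$, where $s_{1}^{\ast}:=b_{1}+1$ if $l$ is even and $s_{1}^{\ast}:=b_{1}+\tfrac{1}{2}$ if $l$ is odd. By Definition \ref{def:b} and the proof of Lemma \ref{boundb}, $s_{1}^{\ast}$ is the smallest first coordinate with the property that $H_{\L}(s_{1}^{\ast},s_{2})=H_{\L}(\infty,s_{2})=H_{L_{2}}(s_{2}-l/2)$ for every $s_{2}$, and one has $s_{1}^{\ast}-l/2\geq g(L_{1})$ so that $H_{L_{1}}(s_{1}^{\ast}-l/2)=0$. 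Evaluating, $H_{\L}(\bm{s})=H_{L_{2}}(0)\geq 1$ while $H_{\L}(s_{1}^{\ast},\infty)=0$. The hypotheses $d_{1}>2b_{1}+l$ and $d_{2}<-2b_{2}-l$ permit us to choose the truncation parallelogram $Q$ with $i_{0}=j_{0}=1$ satisfying \eqref{condition2} and positioned so that $\bm{s}\in Q$; then $Q$ is a fundamental domain for $\Z\Lambda_{1}\oplus\Z\Lambda_{2}$, so $\mathfrak{u}\cap Q=\{\bm{s}\}$, and the Case 3 intersection formulas collapse to
\[
S^{00}(\Lambda,\mathfrak{u})=\{\bm{s}\},\;\;S^{10}(\Lambda,\mathfrak{u})=\emptyset,\;\;S^{01}(\Lambda,\mathfrak{u})=\{\bm{s},\,\bm{s}+\Lambda_{2}\},\;\;S^{11}(\Lambda,\mathfrak{u})=\emptyset.
\]

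Applying the perturbed edge-map formulas of Section \ref{perturbed} together with the symmetry $H_{\L}(-\bm{s})=H_{\L}(\bm{s})+s_{1}+s_{2}$ of Lemma \ref{symmetry2} (and its analogue for the knot $L_{1}$), one computes
\[
\widehat{\Phi}^{L_{2}}_{\bm{s}}\;=\;\widehat{\Phi}^{-L_{2}}_{\bm{s}}\;=\;U^{H_{L_{2}}(0)}\bmod U\;=\;0,
\]
while the remaining edge maps incident to $\widehat{\A}^{00}_{\bm{s}}$, $\widehat{\A}^{01}_{\bm{s}}$ and $\widehat{\A}^{01}_{\bm{s}+\Lambda_{2}}$ all target cells in the empty sets $S^{10}$ or $S^{11}$, and so are killed by the truncation. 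Hence these three generators split off as independent $\F$-summands of $\widehat{C}(\Lambda,\mathfrak{u})$, so $\widehat{HF}(S^{3}_{d_{1},d_{2}}(\L),\mathfrak{u})$ has $\F$-rank at least $3$, contradicting the L-space hypothesis. Therefore $L_{2}$ must be an unknot.

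The main obstacle is the truncation bookkeeping: one must justify that $Q$ can be taken simultaneously as a fundamental domain for $\Z\Lambda_{1}\oplus\Z\Lambda_{2}$ satisfying \eqref{condition2} and containing $\bm{s}$, and then rigorously verify the collapsed Case 3 sets $S^{\epsilon_{1}\epsilon_{2}}(\Lambda,\mathfrak{u})$ listed above. Once that is established, the knottedness of $L_{2}$ enters only through the arithmetic inequality $H_{L_{2}}(0)\geq 1$, which forces the two surviving edge maps out of $\widehat{\A}^{00}_{\bm{s}}$ to vanish modulo $U$.
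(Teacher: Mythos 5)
Your argument is essentially the paper's own proof: both isolate a single lattice point on the line $s_{2}=l/2$ with first coordinate large enough that $H(\cdot,\infty)=0$ (you use $s_{1}^{\ast}=b_{1}+1$ or $b_{1}+\tfrac12$, the paper uses $g_{1}+l/2$), truncate to a single $2$-cell with the two $\A^{10}$/$\A^{11}$ sides erased, and use $H_{L_{2}}(0)\geq 1$ together with the conjugation symmetry to kill both $\widehat{\Phi}^{\pm L_{2}}_{\bm{s}}$, so that $\widehat{\A}^{00}_{\bm{s}}$ survives in contradiction with Corollary \ref{generation2}(c). Your rank-$3$ count and the paper's appeal to the location of the generator are the same contradiction phrased two ways, and your flagged truncation bookkeeping is handled in the paper exactly as in Lemma \ref{2unknots}.
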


\begin{proof}
Suppose that $S^{3}_{d_{1}, d_{2}}(\L)$ is an L--space for $d_{1}>2b_{1}+l$ and $d_{2}<-2b_{2}-l$ and $L_{2}$ is not an unknot. Consider the lattice point $(s_{1}, s_{2})=(g_{1}+l/2, l/2)$. Since $L_{2}$ is not an unknot,  $H_{2}(0)\geq 1$.  Then $H(g_{1}+l/2, l/2)\geq H(\infty, l/2)=H_{2}(0)\geq 1$.  Since  $d_{1}>2b_{1}+l$ and $d_{2}<-2b_{2}-l$, we can truncate the surgery complex to be Figure \ref{truncatedpn} in the spin$^{c}$-structure represented by  $(g_{1}+l/2, l/2)$. 
Then $\widehat{\Phi}^{\pm L_{1}}_{g_{1}+l/2, l/2}=0$ since the the pair of vertical sides in the truncated surgery complex are erased. Observe that 
$$\Phi^{L_{2}}_{g_{1}+l/2, l/2}=U^{H(g_{1}+l/2, l/2)-H(g_{1}+l/2, \infty)}, \quad \Phi^{-L_{2}}_{g_{1}+l/2, l/2}=U^{H(-g_{1}-l/2, -l/2)-H(-g_{1}-l/2, \infty)}.$$
By Corollary \ref{symmetry3}, $H(-g_{1}-l/2, -l/2)-H(-g_{1}-l/2, \infty)=H(g_{1}+l/2, l/2)-H(g_{1}+3l/2, \infty)$. 
Note that $H(g_{1}+3l/2, \infty)\leq H(g_{1}+l/2, \infty)=H_{1}(g_{1})=0$. Hence, $\widehat{\Phi}^{\pm L_{i}}_{g_{1}+l/2, l/2}=0$, and $\widehat{\A}^{00}_{g_{1}+l/2, l/2}$ generates $\widehat{HF}(S^{3}_{d_{1}, d_{2}}(\L), (g_{1}+l/2, l/2))$ as in Figure \ref{truncatedpn}.  By Corollary \ref{generation2}, this is possible only when $d_{1}>0, d_{2}>0$ and $\det \Lambda>0$. So we get a contradiction, and  $L_{2}$ is an unknot. 

\end{proof}

\begin{remark}
The similar result holds for the component $L_{1}$. 
\end{remark}

\begin{figure}[H]
\centering
\includegraphics[width=2.0in]{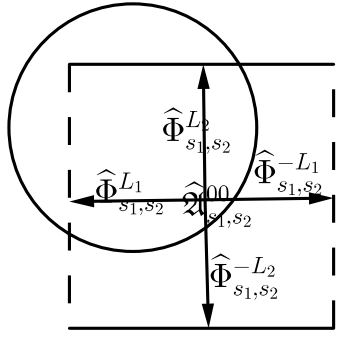}
\caption{Truncated surgery complex  \label{truncatedpn}}
\end{figure}

\begin{corollary}
\label{cor:unknot}
Let $\L=L_{1}\cup L_{2}$ be an L--space link. If $S^{3}_{d_{1}, d_{2}}(\L)$ is an L--space with $d_{1}>2b_{1}+l,  d_{2}<0$, then $L_{2}$ is an unknot. 
\end{corollary}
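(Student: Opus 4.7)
The plan is to reduce the statement to Proposition \ref{unknot component2}, which already gives the conclusion under the stronger hypothesis $d_2 < -2b_2 - l$, by using the monotonicity Lemma \ref{induction} to push $d_2$ downward while preserving the L--space property.

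First, I would dispose of the type (A) case trivially: by Theorem \ref{ppositive}, any L--space surgery on a type (A) link must satisfy $d_1 > 0$ and $d_2 > 0$, so the hypothesis $d_2 < 0$ cannot be met and the conclusion holds vacuously. Thus we may assume $\L$ is type (B).

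Next, I need to check the hypotheses of Lemma \ref{induction} applied with the roles swapped, i.e.\ with $\L' = \L - L_2 = L_1$ so that the variable coefficient becomes $d_2$. By Theorem \ref{L-space link pro}(a), the component $L_1$ is an L--space knot, so $S^3_{d_1}(L_1)$ is an L--space as soon as $d_1 \geq 2g_1 - 1$. Since Lemma \ref{boundb} gives $b_1 \geq g_1 - 1 + l/2$, the hypothesis $d_1 > 2b_1 + l$ forces $d_1 > 2g_1 - 2 + 2l \geq 2g_1 - 2$, hence $d_1 \geq 2g_1 - 1$. So $S^3_{d_1}(L_1)$ is an L--space.

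Now I compute the determinants: $\det(\Lambda') = d_1 > 0$, and $\det(\Lambda) = d_1 d_2 - l^2 < 0$ since $d_1 > 0$, $d_2 < 0$, and $l \geq 0$. Thus $\det(\Lambda) \cdot \det(\Lambda') < 0$, and Case II of Lemma \ref{induction} yields that $S^3_{d_1, d_2 - k}(\L)$ is an L--space for every $k > 0$. Choosing $k$ large enough that $d_2 - k < -2b_2 - l$ puts us in the hypothesis of Proposition \ref{unknot component2} (we still have $d_1 > 2b_1 + l$), which concludes that $L_2$ is unknotted.

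There is no substantive obstacle here beyond correctly identifying that Lemma \ref{induction} applies with the components relabelled; the only care needed is to verify the sign of $\det(\Lambda)$ and the L--space property of $S^3_{d_1}(L_1)$, both of which follow immediately from the given bounds together with Lemma \ref{boundb}.
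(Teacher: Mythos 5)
Your proof is correct and follows essentially the same route as the paper: use Lemma \ref{boundb} to get $d_{1}\geq 2g_{1}-1$ so that $S^{3}_{d_{1}}(L_{1})$ is an L--space, apply Case II of Lemma \ref{induction} to decrease $d_{2}$ below $-2b_{2}-l$, and invoke Proposition \ref{unknot component2}. Your explicit reduction to the type (B) case via Theorem \ref{ppositive} and your verification of the sign of $\det\Lambda$ are details the paper leaves implicit, but they do not change the argument.
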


\begin{proof}
By Lemma \ref{boundb}, $b_{1}\geq g_{1}-1+l/2$. Then $d_{1}>2b_{1}\geq 2g_{1}-2$. So $d_{1}\geq 2g_{1}-1$. Hence $S^{3}_{d_{1}}(L_{1})$ is an L--space. By the surgery induction (Lemma \ref{induction}), $S^{3}_{d_{1}, d_{2}-k}(\L)$ is an L--space for any $k>0$. Note that $d_{2}-k<-2b_{2}-l$ for sufficiently large $k$. So $S^{3}_{d_{1}, d'_{2}}(\L)$ is an L--space for $d'_{2}<-2b_{2}-l$ and $d_{1}>2b_{1}+l$. By Proposition \ref{unknot component2}, $L_{2}$ is an unknot.

\end{proof}

\begin{figure}[H]
\centering
\includegraphics[width=2.5in]{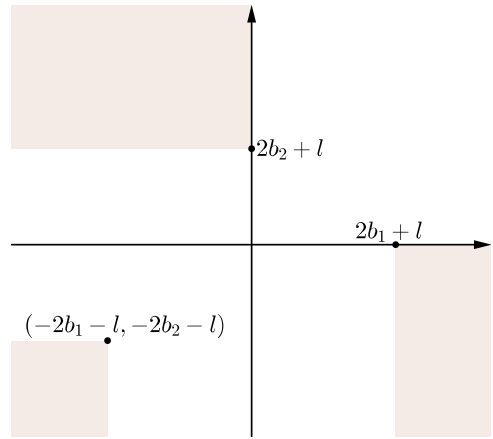}
\caption{L--space surgeries \label{F7}}
\end{figure}

\begin{corollary}
Suppose $\L=L_{1}\cup L_{2}$ is an L--space link such that both components are not unknots. Then the possible L--space surgeries are indicated by the white regions in Figure \ref{F7}. 

\end{corollary}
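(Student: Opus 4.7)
The plan is to assemble the corollary from the results proved earlier in Section \ref{typeb} together with Theorem \ref{ppositive}, treating the type (A) and type (B) cases uniformly. Since neither $L_1$ nor $L_2$ is an unknot, each forbidden (red) region in Figure \ref{F7} is ruled out by taking the contrapositive of one of these results; the white regions are then exactly the complement of these exclusions.

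First I would deal with the mixed-sign regions. Suppose $S^3_{d_1,d_2}(\L)$ is an L-space with $d_1 > 2b_1 + l$ and $d_2 < 0$. Then Corollary \ref{cor:unknot} forces $L_2$ to be an unknot, contradicting the hypothesis; so this region contains no L-space surgery coefficients. Swapping the roles of $L_1$ and $L_2$ in Corollary \ref{cor:unknot} gives the symmetric statement, excluding the region $\{d_1 < 0,\ d_2 > 2b_2 + l\}$ as well.

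Next I would handle the very negative region $\{d_1 < -2b_1 - l,\ d_2 < -2b_2 - l\}$. Here I split according to the type of $\L$. If $\L$ is type (A), then Theorem \ref{ppositive} already forces $d_1 > 0$ and $d_2 > 0$ for any L-space surgery, so this region cannot contain L-space surgery coefficients. If instead $\L$ is type (B), Lemma \ref{2unknots} shows that any L-space surgery in this region would force both $L_1$ and $L_2$ to be unknots, again contradicting the hypothesis.

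The three regions above exhaust the red portions of Figure \ref{F7}, and their complement is the union of white regions, giving the claimed description of the possible L-space surgery locus. There is essentially no obstacle here beyond bookkeeping: the content is a contrapositive reading of Corollary \ref{cor:unknot}, its symmetric counterpart, and Lemma \ref{2unknots}, with Theorem \ref{ppositive} handling the type (A) case in the very negative quadrant.
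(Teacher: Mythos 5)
Your proof is correct and follows essentially the same route as the paper, which simply cites Lemma \ref{2unknots} and Corollary \ref{cor:unknot} and calls the rest straightforward. Your explicit handling of the type (A) case in the very negative quadrant via Theorem \ref{ppositive} is a welcome extra precision, since Lemma \ref{2unknots} is only stated for type (B) links and the paper leaves that case implicit.
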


\begin{proof}
It is straight-forward from Lemma \ref{2unknots} and Corollary \ref{cor:unknot}.

\end{proof}

\begin{example}
The torus link $\L=T(4, 6)$ is 2-component L--space link with linking number $6$. Both of the knot components are right-handed trefoils. Its L--space surgery set is contained in the white region indicated in Figure \ref{F7}, and is unbounded from below. We refer the readers to \cite[Figure 1]{GN} for the details.

 \end{example}

Now, we characterize the torus link $T(2, 2l)$. The torus link $T(2, 2l)$ is a 2-component L--space link with unknotted components and linking number $l$ \cite{Liu}. The Hopf link admits negative L--space surgeries (i.e $d_{1}\ll 0, d_{2}\ll 0$) which are connect sums of lens spaces, but the torus links $T(2, 2l)$ with $l>1$ do not admit such L--space surgeries (Proposition \ref{linking number 1}). Moreover,  we prove that large negative surgeries (i.e $d_{i}\ll 0$)  characterize the Hopf link (Corollary \ref{hopf}), and the existence of L--space surgeries $S^{3}_{d_{1}, d_{2}}(\L)$ with $d_{1}d_{2}<0$  characterizes the torus link $T(2, 2l)$ with $l>1$ (Theorem \ref{torus}). 

\begin{figure}[H]
\centering
\includegraphics[width=2.0in]{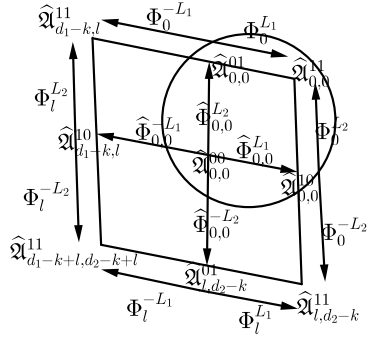}
\caption{Truncated  surgery complex  \label{surgery22}}
\end{figure}

\begin{proposition}
\label{linking number 1}
Let $\L$ be a nontrivial 2-component L--space link with unknotted components and $l>1$. Then $S^{3}_{d_{1}, d_{2}}(\L)$ is not an L--space for $d_{1}<0, d_{2}<0$ and $\det \Lambda>0$. 

\end{proposition}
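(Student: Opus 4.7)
The plan is to derive a contradiction from Corollary \ref{generation2}(b): under the hypothesis $d_{1}<0,\ d_{2}<0,\ \det\Lambda>0$, if $S^{3}_{d_{1},d_{2}}(\L)$ were an L--space, then in every Spin$^c$ structure $\mathfrak{u}$ the unique generator of $\widehat{HF}(S^{3}_{d_{1},d_{2}}(\L),\mathfrak{u})$ would have to come from a chain supported in $\widehat{C}^{11}(\Lambda,\mathfrak{u})$. I will locate a lattice point $\bm{s}\in\H(\L)$ uniquely representing some Spin$^c$ class $\mathfrak{u}$ inside the truncation parallelogram $Q$, at which all four edge maps $\widehat{\Phi}^{\pm L_{i}}_{\bm{s}}$ leaving $\widehat{\A}^{00}_{\bm{s}}$ vanish. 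Then $\widehat{\A}^{00}_{\bm{s}}$ survives as a nonzero cycle in $\widehat{C}^{00}(\Lambda,\mathfrak{u})$ (this is the picture in Figure \ref{surgery22}), and Theorem \ref{cellhom} forces the generator of $\widehat{HF}(S^{3}_{d_{1},d_{2}}(\L),\mathfrak{u})$ to live in $\widehat{C}^{00}$ rather than $\widehat{C}^{11}$, contradicting the corollary.

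The natural choice is $\bm{s}=(-l/2,-l/2)\in\H(\L)$. Since both components are unknots, the unknot $H$-function $H_{\mathrm{unknot}}(s)=\max(-s,0)$ combined with Lemma \ref{infinity} gives $H(\infty,-l/2)=H(-l/2,\infty)=l$ and $H(\infty,l/2)=H(l/2,\infty)=0$. Using the symmetry $H(-\bm{s})=H(\bm{s})+s_{1}+s_{2}$ from Lemma \ref{symmetry2}, the formulas \eqref{maps2} at $\bm{s}$ all reduce to
\[
\widehat{\Phi}^{\pm L_{i}}_{\bm{s}}\,=\,\text{(coefficient of $U^{0}$ in)}\ U^{H_{\L}(l/2,l/2)},
\]
so all four vanishing conditions collapse to the single requirement $H_{\L}(l/2,l/2)\geq 1$. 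Since $d_{1},d_{2}<0$ and $\det\Lambda>0$, one can center the parallelogram $Q$ at the origin and choose $\mathfrak{u}$ represented by $\bm{s}$ alone within $Q$, reproducing Figure \ref{surgery22}.

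The main obstacle is proving that $H_{\L}(l/2,l/2)\geq 1$ for any nontrivial 2-component L--space link with unknotted components and $l>1$. This is precisely where the hypotheses are used: for the Hopf link ($l=1$) one has $H(1/2,1/2)=0$, consistent with the fact that the Hopf link does admit negative L--space surgeries with $\det \Lambda>0$. My plan for this step is to assume for contradiction that $H_{\L}(l/2,l/2)=0$ and propagate: by Lemma \ref{growth control} and the boundary values above, $H_{\L}(s_{1},s_{2})=0$ for all $(s_{1},s_{2})\succeq(l/2,l/2)$, and then Lemma \ref{symmetry2} determines $H_{\L}$ on the opposite octant as well. Substituting into the Euler-characteristic recursion \eqref{comh} and the Alexander-polynomial identity \eqref{Alex} pins down $\tilde{\Delta}_{\L}(t_{1},t_{2})$ completely; comparing with the Torres condition on the multivariable Alexander polynomial of a two-component link with unknotted components then forces $l=0$, contradicting $l>1$ and completing the proof.
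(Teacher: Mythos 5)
Your reduction of the problem to the single inequality $H_{\L}(l/2,l/2)\geq 1$ is correct as far as it goes: at $\bm{s}=(-l/2,-l/2)$ all four exponents in \eqref{maps2} do collapse to $H_{\L}(l/2,l/2)$ by Lemma \ref{symmetry2}, and if that number were positive the surviving $2$-cell $\widehat{\A}^{00}_{\bm{s}}$ would indeed contradict Corollary \ref{generation2}(b). The genuine gap is that the inequality is false under exactly the hypotheses of the proposition. The torus link $T(2,2l)$ with $l>1$ is a nontrivial L--space link with unknotted components, and its $H$-function satisfies $H(s_{1},s_{2})=0$ for all $s_{1},s_{2}\geq l/2$ (this is computed in the proof of Theorem \ref{torus}, where $H(k,k)=l/2-k$ on the diagonal); in particular $H(l/2,l/2)=0$. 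So at your chosen lattice point all four edge maps are $U^{0}=1$, the $2$-cell cancels, and no contradiction arises. Your fallback step is defeated by the same example: knowing that $H$ vanishes on the octant $\succeq(l/2,l/2)$ (together with its reflection) does not pin down $\tilde{\Delta}_{\L}$, and it certainly does not force $l=0$, since $T(2,2l)$ realizes this vanishing with $l>1$ and a nontrivial Alexander polynomial. You should also note that placing $\bm{s}$ alone in $Q$ requires $|d_{i}|$ large, which needs the surgery induction of Lemma \ref{induction}; this is a minor omission compared to the main one, and in fact unnecessary for your cycle argument, but worth flagging.

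The paper's proof avoids this trap by working at $(0,0)$ (or $(1/2,1/2)$ for $l$ odd) and running a two-stage argument. First, Corollary \ref{generation2}(b) forces the $2$-cell to die, i.e.\ $H(0,0)=l/2$, so all four maps out of $\widehat{\A}^{00}_{0,0}$ equal $1$. Second, and this is the step your proposal has no analogue of, one examines the maps from the $1$-cells to the $0$-cells: $\widehat{\Phi}^{L_{i}}_{0}=0$ while $\widehat{\Phi}^{-L_{i}}_{0}=1$, and the reverse at the opposite edge, because $H_{i}(-l/2)=l/2>0$ and $H_{i}(l/2)=0$ (this is where $l>1$, as opposed to $l=1$, is used when $l$ is odd). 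This leaves \emph{two} surviving $0$-cells in the square of Figure \ref{surgery22}, so $\widehat{HF}$ has rank at least $2$ in that Spin$^{c}$ structure. Any repair of your argument would have to move past the $2$-cell and extract the contradiction from the $1$-cell/$0$-cell level, as the paper does.
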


\begin{proof}
We first assume that the linking number is even. Then pick the lattice point $(0, 0)\in \H(\L)$. If $S^{3}_{d_{1}, d_{2}}(\L)$ is an L--space for $d_{1}<0, d_{2}<0, \det\Lambda>0$, by the surgery induction (Lemma \ref{induction}), $S^{3}_{d_{1}-k, d_{2}-k}(\L)$ is also an L--space for any integer $k>0$. By choosing $k$ sufficiently large, we can truncate the surgery complex in the Spin$^{c}$ structure of $(0, 0)$ as in Figure \ref{surgery22}. 

Observe that $\Phi^{\pm L_{i}}_{0, 0}=U^{H(0, 0)-H(0, \infty)}$ and $H(0, \infty)=H_{1}(-l/2)=l/2$. We claim that   $H(0, 0)=H(0, \infty)=H(\infty, 0)=l/2$. If $H(0, 0)>l/2$, then $H(0, 0)-H(0, \infty)>0$ and $H(0, 0)-H(\infty, 0)>0$. Hence,  $\widehat{\Phi}_{0, 0}^{\pm L_{i}}=0$. This indicates that $\widehat{\A}^{00}_{0, 0}$ generates $\widehat{HF}(S^{3}_{d_{1}-k, d_{2}-k}(\L), \bm{0})$. By Corollary \ref{generation2}, this is possible only when $d_{1}>0, d_{2}>0$ and $\det\Lambda>0$. Hence, $H(0, 0)=l/2$, and $\widehat{\Phi}_{0, 0}^{\pm L_{i}}=1$. Note that 
$$\Phi^{\pm L_{i}}_{0}=U^{H_{i}(\mp l/2)}, \Phi^{\pm L_{i}}_{l}=U^{H_{i}(\pm l/2)}$$
for $i=1, 2$. 

Recall that  $H_{i}(l/2)=0$ and $H_{i}(-l/2)> 0$. Then $\widehat{\Phi}^{L_{i}}_{0}=0$,  $\widehat{\Phi}^{-L_{i}}_{0}=1$,  $\widehat{\Phi}_{l}^{L_{i}}=1$ and $ \widehat{\Phi}_{l}^{-L_{i}}=0$.
 Hence, we see that $\widehat{\A}^{11}_{0, 0}$ and $\widehat{\A}^{11}_{d_{1}+l-k, d_{2}+l-k}$ generate $\widehat{HF}(S^{3}_{d_{1}-k, d_{2}-k}(\L), \bm{0})$ in Figure \ref{surgery22},  which contradicts to the assumption that $S^{3}_{d_{1}-k, d_{2}-k}(\L)$ is an L--space. 

Now we suppose that the linking number $l$ is odd, i.e. $l\geq 3$. The argument is very similar. Pick the lattice point $(1/2, 1/2)\in \H(\L)$. Similarly, we have
$$\Phi^{ L_{i}}_{1/2}=U^{H_{1}(1/2-l/2)}, \quad \Phi^{-L_{i}}_{1/2}=U^{H_{1}(l/2-1/2)}, $$
$$\Phi^{L_{i}}_{1/2+l}=U^{H_{1}(1/2+l/2)}, \quad \Phi^{-L_{i}}_{1/2+l}=U^{H_{1}(-1/2-l/2)}.$$
Hence $\widehat{\Phi}^{L_{i}}_{1/2}=\widehat{\Phi}^{-L_{i}}_{1/2+l}=0$. Therefore, $\widehat{\A}^{11}_{1/2, 1/2}$ and $\widehat{\A}^{11}_{1/2+d_{1}+l-k, 1/2+d_{2}+l-k}$ generate the homology $\widehat{HF}(S^{3}_{d_{1}-k, d_{2}-k}(\L), (1/2, 1/2))$ which is a contradiction. Hence $S^{3}_{d_{1}, d_{2}}(\L)$ is not an L--space for $d_{1}<0, d_{2}<0, \det\Lambda>0$. 

\end{proof}

\begin{figure}[H]
\centering
\includegraphics[width=3.0in]{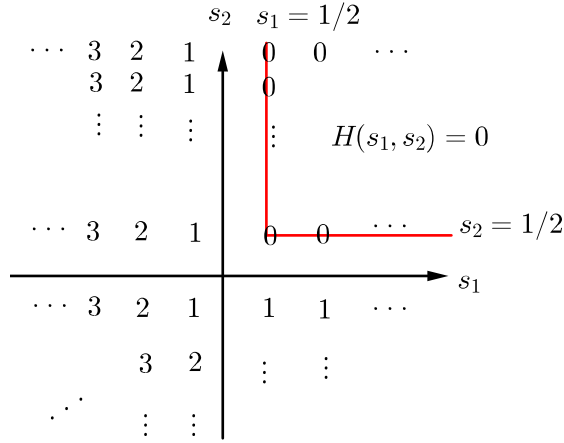}
\caption{The $H$-function $H(\L)$  \label{h-function}}
\end{figure}

\begin{corollary}
\label{hopf}
Suppose that $\L=L_{1}\cup L_{2}$ is a nontrivial 2-component L--space link. If $S^{3}_{d_{1}, d_{2}}(\L)$ is an L--space with $d_{1}<-2b_{1}-l, d_{2}<-2b_{2}-l$, then $\L$ is the Hopf link. 
\end{corollary}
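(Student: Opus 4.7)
The plan is to peel off features of $\L$ using the paper's earlier results, until only the Hopf link remains. First, since the hypotheses $d_1 < -2b_1 - l$ and $d_2 < -2b_2 - l$ are exactly those of Lemma \ref{2unknots}, we immediately obtain that both $L_1$ and $L_2$ are unknots, so $g_1 = g_2 = 0$. Moreover, because $d_1, d_2$ are very negative we have $\det \Lambda = d_1 d_2 - l^2 > 0$, so Proposition \ref{linking number 1} applies and rules out every $l > 1$. Hence $l \in \{0, 1\}$.

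Next I would rule out $l = 0$. By Lemma \ref{boundb}, $b_i \geq -1$, so the only obstruction to $b_i \geq 0$ is the case $b_1 = -1$ (or $b_2 = -1$). If $b_1 = -1$ then $H_\L(s_1, s_2) = H_\L(\infty, s_2) = H_{L_2}(s_2)$ for all $s_1 \geq 0$; combining with the conjugation symmetry of Lemma \ref{symmetry2}, one computes that $H_\L$ coincides with the $H$-function of the 2-component unlink, so $\Delta_\L = 0$ via \eqref{comh} and \eqref{Alex}, and $\L$ would have to be the unlink, contradicting nontriviality. Hence $b_1, b_2 \geq 0$, and \cite[Theorem 5.1]{GLM} says any L-space surgery must satisfy $d_i > 2b_i \geq 0$, contradicting $d_i < -2b_i \leq 0$. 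Thus $l = 1$. For $l = 1$, Lemma \ref{boundb} gives $b_i \geq 0$, while Corollary \ref{positive surgery 2} says that any $b_i \geq g_i + l/2 = 1/2$, i.e.\ $b_i \geq 1$, would force all L-space surgeries to lie in the first quadrant, again contradicting our hypothesis. Hence $b_1 = b_2 = 0$.

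With unknotted components, linking number $1$, and $b_1 = b_2 = 0$, the $H$-function is pinned down: $H_\L(s_1, s_2) = H_{L_2}(s_2 - 1/2)$ for $s_1 \geq 1/2$, symmetrically for $s_2 \geq 1/2$, and the symmetry $H_\L(-s_1, -s_2) = H_\L(s_1, s_2) + s_1 + s_2$ of Lemma \ref{symmetry2} extends this uniquely to the whole lattice. One verifies that the resulting $H$-function agrees with that of the Hopf link, and hence via \eqref{comh} and \eqref{Alex} so does the symmetrized Alexander polynomial $\Delta_\L$. I would then invoke the Thurston polytope characterization of $T(2, 2l)$ noted in the acknowledgements to conclude that $\L$ is the Hopf link $T(2, 2)$. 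The main obstacle is precisely this last step: the $H$-function and Alexander polynomial computations are mechanical, but the passage from Alexander polynomial to link identification relies on the external input that the Thurston polytope of the link exterior distinguishes $T(2, 2l)$ among L-space links with unknotted components and linking number $l$; alternatively one could try to extract from the very negative L-space surgery, via the surgery induction of Lemma \ref{induction}, a new L-space surgery with $d_1 d_2 < 0$ and invoke Theorem \ref{thm:torus} directly, but this reroute appears harder to execute than the Thurston polytope appeal.
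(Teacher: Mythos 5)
Your proposal is correct and follows essentially the same route as the paper's proof: Lemma \ref{2unknots} to get unknotted components, Proposition \ref{linking number 1} plus \cite[Theorem 5.1]{GLM} to force $l=1$, then computation of the $H$-function (which the paper phrases via the absence of maximal lattice points rather than via $b_1=b_2=0$, but these are equivalent here by Proposition \ref{char of typeA}), and finally the Thurston polytope argument producing the annulus. Your handling of the $l=0$, $b_i=-1$ edge case is in fact slightly more careful than the paper's, which passes over it silently.
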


\begin{proof}
By Lemma \ref{2unknots}, both $L_{1}$ and $L_{2}$ are unknots. There is an L--space surgery with $d_{1}<0, d_{2}<0$ and $\det \Lambda>0$. By Proposition \ref{linking number 1}, the linking number $l$ is $0$ or $1$. If the linking number is 0, $S^{3}_{d_{1}, d_{2}}(\L)$ is an L--space if and only if  $d_{1}>2b_{1}$ and $d_{2}>2b_{2}$ \cite[Theorem 5.1]{GLM}. Then the linking number must be $1$.  Note that $H(s_{1}, \infty)=H_{1}(s_{1}-1/2)=0$ and $H(\infty, s_{2})=H_{2}(s_{2}-1/2)=0$ for all $s_{1}>0, s_{2}>0$. The link $\L$ has no maximal lattice point since it is type (B). Then $H(s_{1}, s_{2})=0$ for all $(s_{1}, s_{2})\in \H(\L)$ and $(s_{1}, s_{2})\succeq \bm{0}$. 

By Lemma  \ref{symmetry2}, $H(-s_{1}, -s_{2})=H(s_{1}, s_{2})+s_{1}+s_{2}$ for all $(s_{1}, s_{2})\in \H(\L)$ and $(s_{1}, s_{2})\succeq \bm{0}$. It is not hard to see the H-function of $\L$ is as shown in Figure \ref{h-function} by Lemma \ref{growth control}. The Alexander polynomial for the Hopf link is $1$. By \eqref{Alex} and \eqref{computeh}, one can compute the $H$-function of the Hopf link is the same as the one in Figure \ref{h-function}. This indicates that they have the same Thurston polytope \cite{Liu1}, and the Thurston polytope lies on a line of slope 1 passing through the origin. Then there exists an annulus  representing the homology class $(-1, 1)\in H_{2}(S^{3}, \L; \Z)$ whose boundary components are longitudes for the corresponding link components. Hence $\L$ is the Hopf link. 

\end{proof}

\begin{theorem}
\label{torus}
Let $L=L_{1}\cup L_{2}$ be an L--space link with unknotted components and linking number $l$. If $S^{3}_{d_{1}, d_{2}}(\L)$ is an L--space for $d_{1}d_{2}<0$, then $L$ is the torus link  $T(2, 2l)$. 

\end{theorem}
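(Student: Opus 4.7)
My plan is to follow the template of Corollary \ref{hopf}: use the L-space surgery hypothesis to force $H_{\L}$ to equal $H_{T(2,2l)}$, then invoke the Thurston polytope characterization of $T(2,2l)$.

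\textbf{Step 1: reduction to a rigid setting.} Since $d_1 d_2 < 0$, Theorem \ref{ppositive} immediately rules out $\L$ being type (A); hence $\L$ is type (B). Combined with $g_1 = g_2 = 0$, Proposition \ref{char of typeA} gives $b_i < l/2$, while Lemma \ref{boundb} gives $b_i \geq l/2 - 1$, so $b_1$ and $b_2$ are determined by $l$. The case $l = 0$ is vacuous by Theorem 5.1 of \cite{GLM} (which forces $d_1, d_2 > 0$), so I may assume $l \geq 1$. Assuming without loss of generality that $d_1 > 0 > d_2$, I have $\det \Lambda = d_1 d_2 - l^2 < 0$. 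Since $L_2$ is an unknot, $S^3_{d_2}(L_2)$ is a lens space; applying both cases of Lemma \ref{induction} extends the L-space surgery to every $S^3_{d_1 + m,\, d_2 - n}(\L)$ with $m, n \geq 0$, so I may further assume $d_1 \gg 0$ and $d_2 \ll 0$.

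\textbf{Step 2: extract $H_{\L}$ from the truncated complex.} In this extreme regime we are in Case 3 of Section \ref{perturbed}; by Corollary \ref{generation2}(c), the rank-one $\widehat{HF}(S^3_{d_1, d_2}(\L), \mathfrak{u})$ in every Spin$^c$-structure $\mathfrak{u}$ is represented by a 1-chain in $\widehat{C}^{01}(\Lambda, \mathfrak{u}) \oplus \widehat{C}^{10}(\Lambda, \mathfrak{u})$ connecting the two erased vertical sides of the CW-rectangle. Following the local analysis used in Lemma \ref{2unknots} and Proposition \ref{unknot component2}, I would evaluate the edge maps $\widehat{\Phi}^{\pm L_i}$ at carefully chosen lattice points $\bm{s}$ via \eqref{maps2}, using Corollary \ref{symmetry3}, Lemma \ref{growth control}, and the boundary values $H_{\L}(s_1, \infty) = \max(0, l/2 - s_1)$, $H_{\L}(\infty, s_2) = \max(0, l/2 - s_2)$ coming from the unknottedness of the components. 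Requiring the rank-one homology condition in every Spin$^c$-structure should force $H_{\L}(s_1, s_2) = H_{T(2,2l)}(s_1, s_2)$ at every lattice point.

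\textbf{Step 3: Thurston polytope identification.} Once the $H$-functions agree, equations \eqref{computeh} and \eqref{Alex} give $\Delta_{\L} = \Delta_{T(2,2l)}$, so $\L$ and $T(2,2l)$ share the same Thurston polytope (see \cite{Liu1}). As in Corollary \ref{hopf}, the polytope is a line segment of slope $\pm 1$, which means $\L$ bounds an embedded annulus representing the class $(\mp 1, \pm 1) \in H_2(S^3, \L; \Z)$ whose two boundary components are longitudes of $L_1$ and $L_2$; this forces $\L = T(2, 2l)$.

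\textbf{Main obstacle.} The delicate step is computing $H_{\L}$ in full. Unlike the negative-surgery case of Corollary \ref{hopf}, where the CW-complex is a contractible square and $H_{\L}$ can essentially be read off from a single vertex, here the L-space generator is a 1-chain threading a rectangle with two erased sides, so the constraints on $H_{\L}$ at any one lattice point are weaker. Propagating these constraints consistently across every Spin$^c$-structure, and controlling the parity distinction between integer and half-integer lattices for $l$ even vs.\ odd, is where the real technical work lies.
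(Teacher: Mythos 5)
Your overall strategy coincides with the paper's: reduce via Lemma \ref{induction} to the regime $d_1 \gg 0$, $d_2 \ll 0$, use the truncated complex to pin down $H_{\L}$, and finish with the Thurston polytope argument of Corollary \ref{hopf}. Steps 1 and 3 are sound (the sign bookkeeping for Lemma \ref{induction} checks out: with $d_2<0$ one has $\det\Lambda\cdot d_2>0$ so $d_1$ can be increased, and with $d_1>0$ one has $\det\Lambda\cdot d_1<0$ so $d_2$ can be decreased). But Step 2 is the entire content of the theorem, and you leave it at ``should force $H_{\L}=H_{T(2,2l)}$'' while explicitly conceding in your last paragraph that you have not carried it out. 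That is a genuine gap, and it is not merely a matter of bookkeeping: at a single Spin$^c$ structure the rank-one condition only tells you that not every edge map out of the 2-cell $\widehat{\A}^{00}_{\bm{s}}$ vanishes (otherwise the 2-cell would generate, contradicting Corollary \ref{generation2}(c)), i.e.\ that at least one of two $U$-exponents is zero. That alone does not determine $H$ at $\bm{s}$, and it is not clear a priori that these one-bit constraints, one per Spin$^c$ class, assemble into a unique $H$-function.

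The paper closes the gap as follows. Choosing the truncation so that each Spin$^c$ class contains a single square with its vertical sides erased, both maps $\widehat{\Phi}^{\pm L_1}_{\bm{s}}$ vanish, so at least one of $\widehat{\Phi}^{L_2}_{\bm{s}}$, $\widehat{\Phi}^{-L_2}_{\bm{s}}$ must equal $1$. The key observation is that at a diagonal point $\bm{s}=(k,k)$ with $0\le k\le l/2$ ($l$ even) the two alternatives give the \emph{same} answer: $\widehat{\Phi}^{L_2}_{k,k}=1$ forces $H(k,k)=H(k,\infty)=H_1(k-l/2)=l/2-k$, while by Corollary \ref{symmetry3} $\widehat{\Phi}^{-L_2}_{k,k}=1$ forces $H(k,k)=H(k+l,\infty)-k+l/2=l/2-k$ as well. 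So the diagonal values are determined unconditionally; then Lemma \ref{growth control}, the unknotted boundary values $H(s_1,\infty)$, $H(\infty,s_2)$, the type (B) hypothesis (which gives $H\equiv 0$ on $s_1,s_2\ge l/2$), and Corollary \ref{symmetry3} squeeze every remaining lattice point. Without the coincidence on the diagonal, the ``propagation across Spin$^c$ structures'' you defer to does not obviously close up. Two smaller points: your $l=0$ reduction via \cite{GLM} needs the extra remark that a nontrivial such link has $b_i\ge 0$ (so that $d_i>2b_i$ really forces $d_i>0$), and the case of odd $l$, where the diagonal lattice points sit at half-integers, needs to be addressed separately (the paper is itself terse here).
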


\begin{proof}
Without loss of generality, assume that $S^{3}_{d_{1}, d_{2}}(\L)$ is an L--space for $d_{1}>0$ and $d_{2}<0$.  Then $\L$ is type (B) and there are no maximal lattice points. Note that both $L_{1}$ and $L_{2}$ are unknots, so $S^{3}_{d_{i}}(L_{i})$ is also an L--space. By the surgery induction (Lemma \ref{induction}), $S^{3}_{d'_{1}, d'_{2}}(\L)$ is also an L--space for $d'_{1}\gg 0$ and $d'_{2}\ll 0$.

We first suppose that the linking number $l>0$ is even.  In the Spin$^{c}$ structure $(0, 0)$, we can truncate the surgery complex to be a  square with a pair of sides erased as in Figure \ref{truncatedpn}. Then $\widehat{\Phi}^{\pm L_{1}}_{0, 0}=0$. Note that 
$$\Phi^{\pm L_{2}}_{0, 0}=U^{H(0, 0)-H(0, \infty)}.$$
Since $L_{1}$ is unknot, then $H( 0, \infty)=H_{1}(-l/2)=l/2$. We claim that $H(0, 0)=l/2$. Otherwise, $H(0, 0)-H(0, \infty)>0$ and $\widehat{\Phi}^{\pm L_{2}}_{0, 0}=0$. So $\widehat{\A}^{00}_{0, 0}$ generates $\widehat{HF}(S^{3}_{d'_{1}, d'_{2}}(\L), (0, 0))$. By Corollary \ref{generation2}, this is only possible when $d'_{1}>0, d'_{2}>0$,  which is a contradiction. Hence $H(\infty, 0)=H(0, 0)=H(0, \infty)=l/2$. 

We claim the $H$-function of $\L$ is the same as the $H$-function of the torus link $T(2, 2l)$. By Lemma \ref{growth control}, $H(0, s_{2})=H(s_{1}, 0)=l/2$ for all $s_{1}, s_{2}\geq 0$. Note that $H(-s_{1}, \infty)=H_{1}(-s_{1}-l/2)=s_{1}+l/2$ for all $s_{1}>0$. Then $H(s_{1}, s_{2})=H(s_{1}, \infty)$ for all $s_{1}\leq 0, s_{2}\geq 0$ by Lemma \ref{growth control}. By a similar argument, we can prove that  $H(s_{1}, s_{2})=H(\infty, s_{2})$ for all $s_{1}\geq 0, s_{2}\leq 0$. 

Now we analyze the $H$-function in the first quadrant (i.e $s_{1}>0, s_{2}>0$). Observe that $H(s_{1}, \infty)=0$ for all $s_{1}\geq l/2$ and $H(s_{1}, \infty)=l/2-s_{1}$  for $0\leq s_{1}<l/2$ since $H(s_{1}, \infty)=H_{1}(s_{1}-l/2)$ and $L_{1}$ is an unknot. The similar result holds for $H(\infty, s_{2})$. Since $\L$ is type (B), $H(s_{1}, s_{2})=0$ for all $s_{1}\geq l/2,  s_{2}\geq l/2$. By Lemma \ref{growth control}, we have 
$$H(s_{1}, s_{2})=H(s_{1}, \infty) \textup{ for } 0\leq s_{1}\leq l/2,  s_{2}\geq l/2,$$
$$H(s_{1}, s_{2})=H( \infty, s_{2}) \textup{ for } 0\leq s_{2}\leq l/2,  s_{1}\geq l/2.$$
Now we just need to discuss the values of the $H$-function in the square region $\{(s_{1}, s_{2})| 0\leq s_{1}\leq l/2, 0\leq s_{2}\leq l/2\}.$

The values of the $H$-function at the boundary are already known. We claim that the diagonal value $H(k, k)=l/2-k$. For the Spin$^{c}$ structure $(k, k)$, we can truncate the surgery complex in this Spin$^{c}$ structure to be the square as shown in Figure \ref{truncatedpn}. Then 
$$\Phi^{L_{2}}_{k, k}=U^{H(k, k)-H(k, \infty)}, \quad \Phi^{-L_{2}}_{k, k}=U^{H(-k, -k)-H(-k, \infty)}=U^{H(k, k)-H( k+l, \infty)+k-l/2}. $$
If $S^{3}_{d'_{1}, d'_{2}}(\L)$ is an L--space, $\widehat{\Phi}^{L_{2}}_{k, k}=1$ or $\widehat{\Phi}^{-L_{2}}_{k, k}=1$ by a similar argument. This indicates that 
$$H(k, k)=H(k, \infty)=H_{1}(k-l/2)=l/2-k$$
or
$$ H(k, k)=H( k+l, \infty)-k+l/2=H_{1}(k+l/2)-k+l/2=l/2-k.$$
Thus in both cases $H(k, k)=l/2-k$. Note that $H(k, l/2)=H(l/2, k)=l/2-k$. Hence, 
$$H(s_{1}, k)=H(k, s_{2})=l/2-k$$
for all $k\leq s_{1}\leq l/2, k\leq s_{2}\leq l/2$. 

By Corollary \ref{symmetry3}, the values of $H$-function in the third quadrant are determined by its values in the first quadrant. 
Therefore, the $H$-function of $\L$ is the same as the one of the torus link $T(2, 2l)$. This means that they have the same Thurston polytope \cite{Liu1}, and the Thurston polytope lies on a line of slope 1 passing the origin. Then there exists an annulus  representing the homology class $(-1, 1)$ whose boundary components are longitudes for the corresponding link components. Hence $\L$ is the torus link $T(2, 2l)$. 

\end{proof}

\subsection{L--space surgeries on L--space links with vanishing linking numbers}
\label{sec:linkingnumber0}

In this section, we discuss 2-component L--space links $\L=L_{1}\cup L_{2}$ with vanishing linking number. With this additional assumption, some results in Section \ref{typea} and Section \ref{typeb} can be strengthened.

Lemma \ref{knot L--space} is strengthened as follows:

\begin{proposition}\cite[Proposition 5.6]{GLM}
\label{knot link}
Suppose that $\L$ is an L--space link with linking number zero. If $S^{3}_{d_{1}, d_{2}}(\L)$ is an L--space, then either $S^{3}_{d_{1}}(L_{1})$ or $S^{3}_{d_{2}}(L_{2})$ is an L--space. 
\end{proposition}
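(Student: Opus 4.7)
\emph{Proof strategy.} The plan is to dichotomize according to whether $\L$ is type (A) or type (B).

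If $\L$ is type (A), then Lemma \ref{knot L--space} directly yields the conclusion. So the substantive case is $\L$ of type (B). With $l=0$, Proposition \ref{char of typeA} gives $b_i<g_i$ for both $i$, while Lemma \ref{boundb} gives $b_i\geq g_i-1$; thus $b_i=g_i-1$ for $i=1,2$. In this regime one can invoke the theorem from the introduction asserting that, for $l=0$ and $b_i=g_i-1$ for both $i$, $S^3_{d_1,d_2}(\L)$ is an L-space iff $d_i>2b_i=2g_i-2$---yielding in particular that both $S^3_{d_i}(L_i)$ are L-spaces, which is strictly stronger than the claim.

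A direct self-contained proof would argue by contradiction: assuming $d_i\leq 2g_i-2$ for both $i$, take $\bm{s}_0=(g_1-1,g_2-1)$ and examine the four edge maps $\widehat{\Phi}^{\pm L_i}$ flowing into the $0$-cell $\widehat{\A}^{11}_{\bm{s}_0}$ using the perturbed formulas \eqref{maps2}. Each relevant exponent is $H_i(\cdot)$ evaluated at an integer strictly less than $g_i$, hence strictly positive by Lemma \ref{genus0}. So all four maps vanish after setting $U=0$, making $\widehat{\A}^{11}_{\bm{s}_0}$ a nontrivial $0$-cycle in the associated CW complex from Section \ref{location}.

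Now split on the signs of $d_1,d_2$. In case (a) ($d_1,d_2>0$) or case (c) ($d_1d_2<0$), the relative CW complex has $H_0=0$, so a nontrivial $0$-cycle directly contradicts $\widehat{HF}(S^3_{d_1,d_2}(\L),\mathfrak{u})\cong\F$ via Theorem \ref{cellhom} and Corollary \ref{generation2}. The main obstacle is case (b), where $d_1,d_2<0$ and $H_0\cong\F$ makes the single $0$-cycle consistent with the L-space hypothesis. To rule this out, I would combine the conjugation symmetry $H_{\L}(-\bm{s})=H_{\L}(\bm{s})+s_1+s_2$ of Lemma \ref{symmetry2} with the type (B) hypothesis (absence of maximal lattice points) to exhibit a second, homologically independent $0$-cycle in the same Spin$^c$ class at the reflected point $-\bm{s}_0-(d_1,d_2)$ (or a translate thereof), tracking degrees via Theorem \ref{cellhom} to ensure it is not identified with the first one. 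This would force $\dim\widehat{HF}(S^3_{d_1,d_2}(\L),\mathfrak{u})\geq 2$ and produce the desired contradiction.
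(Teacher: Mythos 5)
The paper does not actually prove this proposition---it is imported from \cite[Proposition 5.6]{GLM}---so the comparison is against the in-paper analogues, namely Lemma \ref{knot L--space} and the $0$-cell arguments of Section \ref{typeb}. Your overall strategy is the right one and uses the paper's toolkit, but as written it has two problems. First, your shortcut for the type (B) case is circular inside this paper: you correctly reduce to $b_i=g_i-1$ for $i=1,2$ via Proposition \ref{char of typeA} and Lemma \ref{boundb}, but then invoke Theorem \ref{zero linking surgery2}, whose proof in Section \ref{sec:linkingnumber0} begins by citing Proposition \ref{knot link} itself. So only your ``direct self-contained proof'' can count, and it should be stated for both types at once (the type (A)/(B) dichotomy is then unnecessary).

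Second, in that direct argument, cases (a) and (c) are fine: the four hat-differentials into $\widehat{\A}^{11}_{(g_1-1,g_2-1)}$ have exponents $H_1(g_1-1)$, $H_1(d_1-g_1+1)$, $H_2(g_2-1)$, $H_2(d_2-g_2+1)$, all positive by Lemma \ref{genus0} once $d_i\le 2g_i-2$ and $g_i\ge 1$ (you should say explicitly that the unknot case is trivial, and that $d_i=0$ is excluded since then $\det\Lambda=0$ and the surgery is not a rational homology sphere). But case (b) is left as a plan, and the reflected point you name, $-\bm{s}_0-(d_1,d_2)$, need not lie in the Spin$^c$ class of $\bm{s}_0$. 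The gap is fillable, and more cheaply than you propose: take the translate $\bm{s}_0+\Lambda_1+\Lambda_2=(g_1-1+d_1,\ g_2-1+d_2)$, which is in the same Spin$^c$ class by definition. Its four incoming exponents are $H_1(g_1-1+d_1)$, $H_1(1-g_1)$, $H_2(g_2-1+d_2)$, $H_2(1-g_2)$, all positive because $d_i<0$ and $1-g_i\le g_i-1$; no conjugation symmetry of the link $H$-function and no use of the type (B) hypothesis is needed. Since no differential hits either $0$-cell after setting $U=0$, both classes and their sum are nonbounding cycles, so $\dim_{\F}\widehat{HF}(S^3_{d_1,d_2}(\L),\mathfrak{u})\ge 2$---exactly the two-vertex argument the paper runs in Proposition \ref{linking number 1}. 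With that substitution your proof closes.
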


We strengthen  Corollary \ref{cor:unknot}, proving that $\L$ is the disjoint union of $L_{1}$ and an unknot:

\begin{proposition}
\label{unknot component}
Suppose the link $\L=L_{1}\cup L_{2}$ is an $L$-space link  with vanishing linking number.  If $S^{3}_{d_{1}, d_{2}}(\L)$ is an $L$-space for $d_{1}>2b_{1}$ and $d_{2}<0$, then $\L=L_{1} \sqcup U$ where $U$ is an unknot.
\end{proposition}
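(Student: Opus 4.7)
The plan is in four short stages. First I would use Corollary \ref{cor:unknot} to get that $L_2$ is an unknot; then the crucial technical step is to pin down $b_2(\L) = -1$ using Theorem \ref{ppositive} as an obstruction; next I would lift $b_2 = -1$ to the identity $H_{\L}(s_1, s_2) = H_{L_1}(s_1) + H_{L_2}(s_2)$ on the whole lattice; and finally I would invoke the Thurston polytope machinery of \cite{Liu1} to conclude splitting.

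Stage 1 (unknot and $b_2$). With $l=0$, the hypotheses $d_1 > 2b_1$ and $d_2 < 0$ are exactly those of Corollary \ref{cor:unknot}, giving that $L_2$ is an unknot; in particular $g_2 = 0$ and $H_{L_2}(s_2) = \max(0,-s_2)$. I claim $b_2(\L) = -1$. If instead $b_2 \geq 0 = g_2 + l/2$, then Proposition \ref{char of typeA} makes $\L$ a type (A) L--space link, and Theorem \ref{ppositive} forces any L--space surgery to satisfy $d_2 > 0$, contradicting $d_2 < 0$. Thus $b_2 \leq -1$. Conversely, for any $s_2 < 0$ and $s_1 \geq g_1$ one has
\[
H_{\L}(s_1,s_2) \;\geq\; H_{\L}(\infty,s_2) \;=\; H_{L_2}(s_2) \;=\; -s_2 \;>\; 0 \;=\; H_{L_1}(s_1) \;=\; H_{\L}(s_1,\infty),
\]
so no $s_2 < 0$ satisfies the defining condition of $b_2$, giving $b_2 \geq -1$. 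Therefore $b_2 = -1$, which means $H_{\L}(s_1,0) = H_{L_1}(s_1)$ for every $s_1 \in \Z$.

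Stage 2 (lifting to all of $\Z^2$). For $s_2 \geq 0$, Lemma \ref{growth control} gives the sandwich
\[
H_{L_1}(s_1) \;=\; H_{\L}(s_1,\infty) \;\leq\; H_{\L}(s_1,s_2) \;\leq\; H_{\L}(s_1,0) \;=\; H_{L_1}(s_1),
\]
so $H_{\L}(s_1,s_2) = H_{L_1}(s_1) + H_{L_2}(s_2)$. For $s_2 < 0$, Lemma \ref{symmetry2} combined with the knot-level symmetry $H_{L_1}(-s_1) = H_{L_1}(s_1) + s_1$ yields
\[
H_{\L}(s_1,s_2) \;=\; H_{\L}(-s_1,-s_2) - s_1 - s_2 \;=\; H_{L_1}(-s_1) - s_1 - s_2 \;=\; H_{L_1}(s_1) + H_{L_2}(s_2).
\]
Hence $H_{\L}(s_1,s_2) = H_{L_1}(s_1) + H_{L_2}(s_2)$ on all of $\Z^2$, which is precisely the $H$-function of the split link $L_1 \sqcup U$ with $U$ an unknot.

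Stage 3 (Thurston polytope and conclusion). As in the proofs of Corollary \ref{hopf} and Theorem \ref{torus}, matching $H$-functions imply via \cite{Liu1} that $\L$ and $L_1 \sqcup U$ share the same Thurston polytope. The Thurston polytope of a split two-component link forces a homology class in $H_2(S^3,\L;\Z)$ represented by an embedded $S^2 \subset S^3$ that separates the two components; hence $\L$ is split and $\L = L_1 \sqcup U$. The main obstacle is the upper bound $b_2 \leq -1$ in Stage 1: this is where the hypothesis $d_2 < 0$ enters essentially, through the type (A) obstruction of Theorem \ref{ppositive}. Once $b_2 = -1$ is established, the derivation of $H_{\L} = H_{L_1} + H_{L_2}$ and the final Thurston polytope appeal are relatively mechanical.
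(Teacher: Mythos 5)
Your proof is correct, but it reaches the key identity $H_{\L}(s_1,s_2)=H_{L_1}(s_1)+H_{L_2}(s_2)$ by a genuinely different route than the paper. The paper first upgrades the hypothesis to $d_2<-2b_2$ by surgery induction and then works directly inside the truncated surgery complex in every Spin$^c$ structure, determining each differential $\widehat{\Phi}^{\pm L_2}_{s_1,s_2}$ case by case ($s_2>0$, $s_2=0$, $s_2<0$) from the L--space condition and reading off the $H$-function from the exponents. You instead extract a single numerical consequence from the existence of a surgery with $d_2<0$: if $b_2\geq g_2+l/2=0$ then $\L$ would be type (A) by Proposition \ref{char of typeA} and Theorem \ref{ppositive} would force $d_2>0$, so $b_2\leq -1$; combined with Lemma \ref{boundb} this gives $b_2=-1$, hence $H_{\L}(s_1,0)=H_{L_1}(s_1)$, and the rest of the $H$-function follows from monotonicity (Lemma \ref{growth control}) and the conjugation symmetry (Lemma \ref{symmetry2}) alone, with no further surgery-complex analysis. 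Your version is shorter, avoids the reduction to $d_2<-2b_2$, and isolates exactly what the hypothesis buys beyond ``$L_2$ is an unknot,'' namely the one inequality $b_2\leq g_2-1$; the paper's computation is more hands-on but also yields the intermediate statements about which edge maps are isomorphisms. Both arguments finish identically via the vanishing of $\Delta_{\L}$ and the Thurston polytope. One small imprecision in your Stage 3: a class in $H_2(S^3,\L;\Z)$ is represented by a surface with boundary on $\L$, not by a closed separating sphere; the correct deduction, as in the paper, is that the Thurston norm vanishes in the direction dual to $L_2$, so $L_2$ bounds a disk disjoint from $L_1$, which gives splitness. This does not affect the validity of your argument.
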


\begin{proof}
By Corollary \ref{cor:unknot}, $L_{2}$ is an unknot. Next, we prove that $H(s_{1}, s_{2})=H_{1}(s_{1})+H_{2}(s_{2})$ where $H_{1}, H_{2}$ are the $H$-functions of $L_{1}, L_{2}$ respectively. By a similar argument to the one in Corollary \ref{cor:unknot}, we assume that $S^{3}_{d_{1}, d_{2}}(\L)$ is an L--space with $d_{1}>2b_{1}, d_{2}<-2b_{2}$. Then  in each Spin$^{c}$ structure $(s_{1}, s_{2})$, we can truncate the surgery complex to be the square  with a pair of sides erased as shown  in Figure  \ref{truncatedpn}. So $\widehat{\Phi}^{\pm L_{1}}_{s_{1}, s_{2}}=0$.

Suppose that $s_{2}>0$. Then 
$$H(-s_{1}, -s_{2})-H(-s_{1}, \infty)=H(s_{1}, s_{2})-H_{1}(s_{1})+s_{2}>0.$$
Hence $\P_{s_{1}, s_{s}}^{-L_{2}}=0$. We claim that $\P^{L_{2}}_{s_{1}, s_{2}}=1$. Otherwise, $\widehat{\A}^{00}_{s_{1}, s_{2}}$ generates $\widehat{HF}(S^{3}_{d_{1}, d_{2}}(\L), \bm{s})$ which is a contradiction by a similar argument to the one in Proposition \ref{unknot component2}. Recall that $\Phi_{s_{1}, s_{2}}^{L_{2}}=U^{H(s_{1}, s_{2})-H(s_{1}, \infty)}$. So $\widehat{\Phi}^{L_{2}}_{s_{1}, s_{2}}=1$ indicates that:
$$H(s_{1}, s_{2})=H(s_{1}, \infty)=H_{1}(s_{1}),$$
for all $s_{2}>0$. 

Suppose that $s_{2}=0$. Then 
$$H(s_{1}, s_{2})-H_{1}(s_{1})=H(-s_{1}, -s_{2})-H_{1}(-s_{1}). $$
This indicates that $\widehat{\Phi}^{L_{2}}_{s_{1}, 0}=\widehat{\Phi}^{-L_{2}}_{s_{1}, 0}$. By a similar argument, we prove that they equal $1$.  
Hence, $H(s_{1}, 0)=H(s_{1}, \infty)=H_{1}(s_{1})$.

Now we consider the case that $s_{2}<0$. By Lemma \ref{growth control}, 
$H(-s_{1}, -s_{2})-H(-s_{1}, \infty)\geq 0$. Then 
$$H(-s_{1}, -s_{2})-H_{1}(-s_{1})=H(s_{1}, s_{2})-H_{1}(s_{1})+s_{2}\geq 0.$$
So $H(s_{1}, s_{2})-H_{1}(s_{1})\neq 0$, and $\widehat{\Phi}^{L_{2}}_{s_{1}, s_{2}}=0$. By a similar argument, one has $\widehat{\Phi}^{-L_{2}}_{s_{1}, s_{2}}=1$. This indicates that 
$$H(-s_{1}, -s_{2})-H(-s_{1}, \infty)=1. $$
Hence $H(s_{1}, s_{2})-H_{1}(s_{1})=-s_{2}$. Recall that $L_{1}$ is an unknot. So $H_{2}(s_{2})=(|s_{2}|-s_{2})/2$, and
 $H(s_{1}, s_{2})=H_{1}(s_{1})+H_{2}(s_{2})$ . 

By \eqref{computeh}, the Alexander polynomial $\Delta_{\L}(t_{1}, t_{2})$ vanishes.  Then the Thurston polytope of $\L$ is the same as that of $L_{1}\sqcup U$, which is an interval on the $s_{1}$-axis connecting $(-g(L_{1}), 0)$ and $(g(L_{1}), 0)$ \cite{Liu1}. The Thurston norm in $(0, 1)$ direction is $0$, and in $(1, 0)$ direction is $g(L_{1})$. It is not hard to use the definition of Thurston norm and the computation of Euler characteristics of surfaces to prove:  $L_{1}$ and $L_{2}$ bound pairwise disjoint surfaces with genera $g(L_{1})$  and $0$ in $S^{3}$, respectively. Hence,  $\L$ is the disjoint union of $L_{1}$ and $U$.

\end{proof} 

Next, we discuss positive L--space surgeries on 2-component L--space links with linking number zero. 

\begin{lemma}
\label{gap distance}
Assume that $\L=L_{1}\cup L_{2}$ is a nontrivial L--space link with vanishing linking number. If $S^{3}_{d_{1}, d_{2}}(\L)$ is an L--space for $d_{1}>0, d_{2}>2b_{2}$, then $d_{1}>b_{1}$. 

\end{lemma}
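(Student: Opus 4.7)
The plan is to argue by contradiction: assume $d_1 \leq b_1$, produce a specific Spin$^c$-structure in which the generator required by Corollary \ref{generation2}(a) cannot exist, and conclude $\widehat{HF}(S^3_{d_1,d_2}(\L),\mathfrak{u})$ has rank at least $2$. First I would record the basic constants: $d_1 > 0$ and $d_1 \leq b_1$ force $b_1 \geq 1$; Lemma \ref{boundb} with $l = 0$ gives $b_2 \geq g_2 - 1$, so $d_2 > 2b_2$ implies $d_2 \geq 2g_2 - 1$ and hence $S^3_{d_2}(L_2)$ is an L--space with $\det\Lambda = d_1 d_2 > 0$, placing us in Case 1 of Section \ref{perturbed}. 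Corollary \ref{generation2}(a) then applies: the generator $\xi$ of $\widehat{HF}(S^3_{d_1,d_2}(\L),\mathfrak{u}) \cong \F$ lies entirely in $\widehat{C}^{00}(\Lambda,\mathfrak{u})$, so one can write $\xi = \sum_{\bm{s}} a_{\bm{s}} z^{00}_{\bm{s}}$.

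Next I would pick the key lattice point: by the definition of $b_1$ there exists $s_2^* \in \Z$ with $H(b_1, s_2^*) > H(\infty, s_2^*) = H_2(s_2^*)$, and comparison with $H(b_1+1, s_2^*) = H_2(s_2^*)$ via Lemma \ref{growth control} sharpens this to the equality $H(b_1, s_2^*) = H_2(s_2^*) + 1$. Working in the Spin$^c$-structure $\mathfrak{u}$ of $(b_1, s_2^*)$ with $Q$ chosen to contain both $(b_1, s_2^*)$ and $(b_1 + d_1, s_2^*)$, the coefficient of $z^{10}_{b_1+d_1, s_2^*}$ in $D\xi$ reads
$a_{b_1+d_1, s_2^*}\,\widehat{\Phi}^{L_1}_{b_1+d_1, s_2^*} + a_{b_1, s_2^*}\,\widehat{\Phi}^{-L_1}_{b_1, s_2^*}.$
Using \eqref{maps2} and Corollary \ref{symmetry3} at $l = 0$, the relevant $U$-exponents are $H(b_1+d_1, s_2^*) - H_2(s_2^*) = 0$ and $H(b_1, s_2^*) - H_2(s_2^*) + b_1 = 1 + b_1 \geq 2$ respectively, so $\widehat{\Phi}^{L_1}_{b_1+d_1, s_2^*} = 1$ while $\widehat{\Phi}^{-L_1}_{b_1, s_2^*}$ vanishes modulo $U$. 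The cycle condition $D\xi = 0$ therefore forces $a_{b_1+d_1,s_2^*} = 0$, and iterating along the row yields $a_{b_1+md_1, s_2^*} = 0$ for every $m \geq 1$; Lemma \ref{isomorphism} on the opposite side of the truncation gives the vanishing for $m \ll 0$.

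The main obstacle is turning these row vanishings into a contradiction with the nontriviality of $\xi$. For this I plan to invoke Theorem \ref{cellhom} and the Case 1 CW-complex picture from Section \ref{location}: $H_2(R, \partial R) \cong \F$ is generated by the sum of all $2$-cells, so the nonzero coefficients $a_{\bm{s}}$ must single out a subset of $S^{00}$ whose image under the CW boundary lies in the erased part. The hypothesis $d_2 > 2b_2$ confines the only row of $\mathfrak{u}$ along which $H(s_1, s_2)$ deviates from $H_1(s_1)$ to $s_2 = s_2^*$, since on every row $s_2 = s_2^* + n d_2$ with $n \neq 0$ the definition of $b_2$ forces $H(s_1, s_2) = H_1(s_1)$. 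I would then run the analogous cycle analysis on the $\widehat{C}^{01}$-edges, splitting into the two subcases $b_1 = g_1 - 1$ and $b_1 \geq g_1$ (where $H_1(b_1)$ is $1$ or $0$ respectively), to propagate the row vanishings across the parallelogram; the delicate interplay between the $\widehat{\Phi}^{\pm L_1}$ and $\widehat{\Phi}^{\pm L_2}$ constraints is what forces $\xi$ to project trivially in $H_2(R, \partial R)$ and contradicts Corollary \ref{generation2}(a).
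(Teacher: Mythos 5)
Your opening reductions are fine, and the computation showing $a_{b_1+d_1,s_2^*}=0$ (hence $a_{b_1+md_1,s_2^*}=0$ for $m\geq 1$) is correct. But the argument then stalls, and the endgame you sketch cannot close the gap. The constraint you derive is a constraint on cycles in $\widehat{C}^{00}$, and it is perfectly consistent with the existence of a generator there: for instance $z^{00}_{b_1,s_2^*}$ by itself is already a cycle, since both $\widehat{\Phi}^{L_1}_{b_1,s_2^*}$ and $\widehat{\Phi}^{-L_1}_{b_1,s_2^*}$ vanish (their $U$-exponents are $H(b_1,s_2^*)-H_2(s_2^*)=1$ and $1+b_1$) and the adjacent $\A^{01}$, $\A^{11}$ cells are erased by the $d_2>2b_2$ truncation. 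So you cannot show that ``$\xi$ projects trivially in $H_2(R,\partial R)$.'' Note also that Corollary \ref{generation2} only describes the $U=0$ reduction of the rank-one free part of $HF^-$; it does not force every $2$-cell to appear with nonzero coefficient in the hat generator, so $a_{b_1+d_1,s_2^*}=0$ is not in tension with anything. The proposed ``propagation across the parallelogram'' via the $\widehat{C}^{01}$ edges is asserted rather than proved, and it is aimed at the wrong target: the obstruction to being an L--space is extra rank in $\widehat{HF}$, not the failure of the expected class to exist.

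The contradiction should come from exhibiting a \emph{second}, independent class. Look at the $1$-cell $\widehat{\A}^{10}_{b_1,s_2^*}$ rather than $\widehat{\A}^{10}_{b_1+d_1,s_2^*}$: its two incoming differentials are $\widehat{\Phi}^{L_1}_{b_1,s_2^*}$, with positive exponent $H(b_1,s_2^*)-H_2(s_2^*)$, and $\widehat{\Phi}^{-L_1}_{b_1-d_1,s_2^*}$, with exponent $H(b_1-d_1,s_2^*)-H_2(s_2^*)+(b_1-d_1)$, which is positive precisely because the contradiction hypothesis gives $0\leq b_1-d_1\leq b_1$ and hence $H(b_1-d_1,s_2^*)\geq H(b_1,s_2^*)>H_2(s_2^*)$ by Lemma \ref{growth control}. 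Its outgoing differentials $\widehat{\Phi}^{\pm L_2}_{s_2^*}$ land in erased $\A^{11}$ cells. Therefore $z^{10}_{b_1,s_2^*}$ is a cycle which is not a boundary, and since no boundary has a nonzero $\widehat{C}^{00}$-component, its class is independent from the class of the cycle in $\widehat{C}^{00}$. This gives $\widehat{HF}(S^3_{d_1,d_2}(\L),\mathfrak{u})$ rank at least $2$, contradicting the L--space assumption. This is exactly the paper's argument, and the assumption $d_1\leq b_1$ enters only in making the exponent of $\widehat{\Phi}^{-L_1}_{b_1-d_1,s_2^*}$ positive.
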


\begin{proof}
By definition \ref{def:b}, $b_{1}=\min\{ \lceil s_{1}-1 \rceil \mid H(s_{1}, s_{2})=H(\infty, s_{2}) \textup{ for all } s_{2} \}$. Since the linking number is $0$, $\H(\L)\cong \Z^{2}$. Then there exists a lattice point $(b_{1}, s_{2})\in \H(\L)$ such that $H(b_{1}, s_{2})>H(\infty, s_{2})=H_{2}(s_{2})$. Since $d_{2}>2b_{2}$, we can truncate the surgery complex in the Spin$^{c}$ structure $(s_{1}, s_{2})$ as shown in Figure \ref{surgery3}. Observe that $\widehat{\Phi}^{-L_{1}}_{b_{1}, s_{2}}=0$, and $\widehat{\Phi}^{L_{1}}_{b_{1}, s_{2}}=0$ since $H(b_{1}, s_{2})>H_{2}(s_{2})$. Suppose that $s_{1}=b_{1}-d_{1}\geq 0$. Then $H(-s_{1}, -s_{2})-H_{2}(-s_{2})=H(s_{1}, s_{2})-H_{2}(s_{2})+s_{1}>0$ since $H(s_{1}, s_{2})\geq H(b_{1}, s_{2})>H_{2}(s_{2})$ by Lemma \ref{growth control}. This indicate that $\widehat{\Phi}^{-L_{1}}_{s_{1}, s_{2}}=0$. From Figure \ref{surgery3}, we see that $\widehat{\Phi}^{\pm L_{2}}_{s_{2}}=0$. So $\widehat{\A}^{00}_{b_{1}, s_{2}}$ and $\widehat{\A}^{10}_{b_{1}, s_{2}}$ are both generators of $\widehat{HF}(S^{3}_{d_{1}, d_{2}}(\L), (b_{1}, s_{2}))$, which contradicts to our assumption that $S^{3}_{d_{1}, d_{2}}(\L)$ is an L--space.  Hence, $b_{1}-d_{1}<0$. 

\begin{figure}[H]
\centering
\includegraphics[width=2.5in]{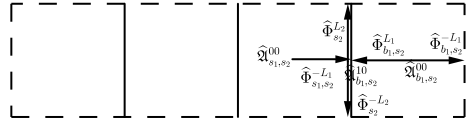}
\caption{Truncated  surgery complex  \label{surgery3}}
\end{figure}  

\end{proof}

\begin{lemma}
\label{one unknot}
Suppose that $\L=L_{1}\cup L_{2}$ is a nontrivial L--space link with vanishing linking number and $L_{2}$ is an unknot. If $S^{3}_{d_{1}, d_{2}}(\L)$ is an L--space for $d_{1}>0, d_{2}>b_{2}$, then $d_{1}>2b_{1}$. 

\end{lemma}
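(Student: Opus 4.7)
The plan is a proof by contradiction that refines Lemma \ref{gap distance} by exploiting the explicit form of the $H$-function of an unknot. Assume $S^{3}_{d_{1}, d_{2}}(\L)$ is an L--space with $d_{1}>0$, $d_{2}>b_{2}$, and, for contradiction, that $d_{1}\leq 2b_{1}$. First I would bootstrap: since $L_{2}$ is an unknot and $b_{2}\geq g_{2}-1=-1$, the hypothesis forces $d_{2}\geq 0$; in the generic case $d_{2}\geq 1$, $S^{3}_{d_{2}}(L_{2})$ is a lens space and hence an L--space, so Lemma \ref{induction} applied to the component $L_{2}$ shows $S^{3}_{d_{1}, d_{2}+k}(\L)$ is an L--space for all $k\geq 0$. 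Choosing $k$ large so that $d_{2}+k>2b_{2}$ and applying Lemma \ref{gap distance} to that surgery yields $d_{1}>b_{1}$; the borderline $d_{2}=0$ forces $b_{2}=-1$ and is handled by a short direct argument using $H(s_{1}, 0)=H_{1}(s_{1})$. Combined with $d_{1}\leq 2b_{1}$, we have $b_{1}<d_{1}\leq 2b_{1}$, so $b_{1}-d_{1}\in(-b_{1}, 0)$.

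Next, by the definition of $b_{1}$, pick a witness $s_{2}^{*}\in\Z$ with $H(b_{1}, s_{2}^{*})>H(\infty, s_{2}^{*})=H_{2}(s_{2}^{*})$; the unknot hypothesis pins $H_{2}(s_{2})=\max(0,-s_{2})$ exactly, and one splits on whether $H_{1}(b_{1})>0$ (so $s_{2}^{*}$ can be taken $\geq 0$) or $H_{1}(b_{1})=0$ (forcing $s_{2}^{*}<0$, in which case the growth Lemma \ref{growth control} and the equality $H(b_{1}+1, s_{2})=H_{2}(s_{2})$ together pin $H(b_{1}, s_{2}^{*})=-s_{2}^{*}+1$). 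Consider the Spin$^{c}$ structure $\mathfrak{u}=[(b_{1}, s_{2}^{*})]\in\H(\L)/\Lambda\Z^{2}$. Because $b_{1}-d_{1}\in(-b_{1}, 0)$, the two lattice points $(b_{1}, s_{2}^{*})$ and $(b_{1}-d_{1}, s_{2}^{*})$ both lie in $\mathfrak{u}$ and in the central strip $|s_{1}|\leq b_{1}$; the bound $d_{2}>b_{2}$ together with the explicit unknot form of $H_{2}$ prevents any other vertically shifted representative of $\mathfrak{u}$ from contributing a nontrivial 2-cell to the truncated perturbed complex.

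The key computation is to show that all four maps $\widehat{\Phi}^{\pm L_{i}}$ emanating from each of the 2-cells $\widehat{\A}^{00}_{b_{1}, s_{2}^{*}}$ and $\widehat{\A}^{00}_{b_{1}-d_{1}, s_{2}^{*}}$ vanish. At $(b_{1}, s_{2}^{*})$ this is exactly as in Lemma \ref{gap distance}, using monotonicity of $H$ and Corollary \ref{symmetry3}. At $(b_{1}-d_{1}, s_{2}^{*})$, monotonicity in $s_{1}$ gives $\widehat{\Phi}^{L_{1}}=0$ immediately, while Corollary \ref{symmetry3} (with $l=0$) together with the unknot identity $H_{2}(-s_{2})-H_{2}(s_{2})=s_{2}$ rewrites the exponent of $\widehat{\Phi}^{-L_{1}}$ as $H(d_{1}-b_{1}, -s_{2}^{*})-H_{2}(-s_{2}^{*})$; since $d_{1}-b_{1}\in[1, b_{1}]$ this falls in the region where $H$ may exceed $H_{2}$, and the explicit values provided by the unknot hypothesis force it to be strictly positive in each subcase. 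The vanishing of $\widehat{\Phi}^{\pm L_{2}}$ at $(b_{1}-d_{1}, s_{2}^{*})$ is handled analogously. Applying Corollary \ref{generation2}(a) (we are in Case~1 since $d_{1}, d_{2}>0$ and $\det\Lambda=d_{1}d_{2}>0$), $\widehat{HF}(S^{3}_{d_{1}, d_{2}}(\L), \mathfrak{u})$ should be represented by a single chain in $\widehat{C}^{00}$, but the two cycle 2-cells give independent classes (no 3-cells are available to equate them), yielding $\dim_{\F}\widehat{HF}(S^{3}_{d_{1}, d_{2}}(\L), \mathfrak{u})\geq 2$ and contradicting the L--space hypothesis.

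The hardest step is establishing positivity of the symmetric exponent $H(d_{1}-b_{1}, -s_{2}^{*})-H_{2}(-s_{2}^{*})$ at $(b_{1}-d_{1}, s_{2}^{*})$ when $b_{1}-d_{1}<0$: without the unknot hypothesis on $L_{2}$ this exponent can collapse and only the bound $d_{1}>b_{1}$ of Lemma \ref{gap distance} survives. The explicit form $H_{2}(s_{2})=\max(0, -s_{2})$, and in particular the pinning $H(b_{1}+1, s_{2})=H_{2}(s_{2})$ arising from the definition of $b_{1}$, is precisely what forces this exponent to be strictly positive and produces the extra factor of $2$ in $d_{1}>2b_{1}$.
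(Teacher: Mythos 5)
Your overall strategy (truncate, exhibit extra generators of $\widehat{HF}$ in the Spin$^c$ structure of a witness point, contradict Corollary \ref{generation2}) is the right one, but there are two genuine gaps. The smaller one is the bootstrap: Lemma \ref{induction} varies the surgery coefficient of the \emph{removed} component, so to increase $d_{2}$ you would need $S^{3}_{d_{1}}(L_{1})$ to be an L--space, not $S^{3}_{d_{2}}(L_{2})$; since $L_{1}$ is not assumed unknotted and $d_{1}$ is not known to satisfy $d_{1}\geq 2g_{1}-1$, the claim that $S^{3}_{d_{1}, d_{2}+k}(\L)$ is an L--space is unjustified, and with it the intermediate bound $d_{1}>b_{1}$. (This turns out to be harmless only because that bound is not needed: the paper's argument uses only $d_{1}-b_{1}\leq b_{1}$, which is the contradiction hypothesis itself.)

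The serious gap is exactly the step you flag as hardest: positivity of the exponent of $\widehat{\Phi}^{-L_{1}}_{(b_{1}-d_{1}, s_{2}^{*})}$, namely $H(d_{1}-b_{1}, -s_{2}^{*})-H_{2}(-s_{2}^{*})$. By Corollary \ref{symmetry3} (with $l=0$) this equals $H(b_{1}-d_{1}, s_{2}^{*})-H_{2}(s_{2}^{*})+(b_{1}-d_{1})$, and the only general estimate available, $H(b_{1}-d_{1}, s_{2}^{*})-H_{2}(s_{2}^{*})\geq H(b_{1}, s_{2}^{*})-H_{2}(s_{2}^{*})\geq 1$, yields only $\geq 1+b_{1}-d_{1}$, which is $\leq 0$ once $d_{1}>b_{1}$. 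For a witness with $s_{2}^{*}\neq 0$ the conjugation symmetry sends row $s_{2}^{*}$ to the \emph{different} row $-s_{2}^{*}$, and nothing you cite forces the excess $H(\cdot, -s_{2}^{*})-H_{2}(-s_{2}^{*})$ to persist out to column $d_{1}-b_{1}$; the constraint $H(b_{1}+1, s_{2})=H_{2}(s_{2})$ governs column $b_{1}+1$, not column $d_{1}-b_{1}\leq b_{1}$. Your dichotomy is also false: $H_{1}(b_{1})=0$ does not force $s_{2}^{*}<0$ (the Whitehead link has $b_{1}=0$, $H_{1}(0)=0$, yet $H(0,0)=1>H_{2}(0)$). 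The missing idea, which is the heart of the paper's proof, is that the witness can \emph{always} be taken at $s_{2}^{*}=0$: if $H(b_{1},0)=0$ then $H(b_{1}, s_{2})=H_{2}(s_{2})$ for all $s_{2}$ (for $s_{2}\geq 0$ by monotonicity, and for $s_{2}<0$ by squeezing $-s_{2}=H_{2}(s_{2})\leq H(b_{1}, s_{2})\leq H(b_{1},0)-s_{2}$, which uses the unknot form of $H_{2}$), contradicting the minimality of $b_{1}$. With $s_{2}^{*}=0$ the reflected row is the same row, and $H(d_{1}-b_{1},0)\geq H(b_{1},0)>0$ follows from $d_{1}-b_{1}\leq b_{1}$ and Lemma \ref{growth control} alone. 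The paper then concludes by showing the single 1-cell $\A^{10}_{(b_{1},0)}$ is a nonbounding cycle in $\widehat{C}^{10}$, which also sidesteps your extra (and not clearly satisfiable) obligation of killing the vertical maps out of two 2-cells: the exponent $H(b_{1},0)-H_{1}(b_{1})$ of $\widehat{\Phi}^{L_{2}}_{(b_{1},0)}$ can perfectly well vanish, so $\widehat{\A}^{00}_{(b_{1},0)}$ need not be a cycle at all.
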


\begin{proof}
Suppose that $d_{1}\leq 2b_{1}$.  Pick the lattice point $(b_{1}, 0)$. Since $d_{2}>b_{2}$, we can truncate the surgery complex in the Spin$^{c}$ structure $(b_{1}, 0)$ to the rectangle with the boundary erased as in Figure \ref{surgery3}. Then $\widehat{\Phi}^{\pm L_{2}}_{b_{1}, 0}=0$.  We claim that   $H(b_{1}, 0)\neq 0$. Otherwise,  by Lemma \ref{growth control}, $H(b_{1}, 0)=0$ implies $H(b_{1}, s_{2})=H_{2}(s_{2})$ for any $s_{2}$. This contradicts to the definition of $b_{1}$. Then  $\Phi^{L_{1}}_{(b_{1}, 0)}=U^{H(b_{1}, 0)-H_{2}(0)}$ implies that $\widehat{\Phi}^{L_{1}}_{b_{1}, 0}=0$. Observe that 
$$\Phi^{-L_{1}}_{b_{1}-d_{1}, 0}=U^{H(d_{1}-b_{1}, 0)-H_{2}(0)}=U^{H(d_{1}-b_{1}, 0)}.$$
If $d_{1}\leq 2b_{1}$, then $d_{1}-b_{1}\leq b_{1}$. So $H(d_{1}-b_{1}, 0)\geq H(b_{1}, 0)>0$. Hence  $\widehat{\Phi}^{-L_{1}}_{b_{1}-d_{1}, 0}=0$. Combining with $\widehat{\Phi}^{L_{1}}_{b_{1}, 0}=0$, we prove that $\widehat{\A}^{10}_{b_{1}, 0}$ is the generator of  $\widehat{HF}(S^{3}_{d_{1}, d_{2}}(\L), (b_{1}, 0))$, which contradicts to Corollary \ref{generation2} and the assumption that $d_{1}>0, d_{2}>0, \det\Lambda>0$. 

\end{proof}

\begin{lemma}
\label{gap distance2}
Let $\L=L_{1}\cup L_{2}$ be a nontrivial L--space link with vanishing linking number. If $S^{3}_{d_{1}, d_{2}}(\L)$ is an L--space for $d_{1}>0, d_{2}>2b_{2}$, then $d_{1}\geq 2g_{1}-1$. 

\end{lemma}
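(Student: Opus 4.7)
My plan is to prove the contrapositive: assuming $d_1 \leq 2g_1 - 2$, I will produce an isolated cell in the hat truncated surgery complex, forcing $\mathrm{rank}\,\widehat{HF}(S^3_{d_1, d_2}(\L), \mathfrak{s}) \geq 2$ and contradicting the L--space hypothesis via Corollary \ref{generation2}(a). Since $\L$ is an L--space link, both components are L--space knots by Theorem \ref{L-space link pro}(a); moreover, combining $d_2 > 2b_2$ with Lemma \ref{boundb} (using $l=0$) yields $d_2 \geq 2g_2 - 1$, so $S^3_{d_2}(L_2)$ is automatically an L--space.

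I would first dispose of the case where $L_2$ is the unknot by invoking Lemma \ref{one unknot}: its hypothesis $d_2 > b_2$ is implied by $d_2 > 2b_2$, and its conclusion $d_1 > 2b_1 \geq 2(g_1 - 1)$ already gives $d_1 \geq 2g_1 - 1$. So I may assume $L_2$ is a nontrivial L--space knot, i.e.\ $g_2 \geq 1$. Lemma \ref{gap distance} together with Lemma \ref{boundb} already forces $d_1 > b_1 \geq g_1 - 1$, so $d_1 \geq g_1$ and in particular $1 \leq d_1 - g_1 + 1 \leq g_1 - 1$.

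The core step is a CW--complex isolation argument modeled on Lemma \ref{gap distance}. Using the freedom afforded by $d_2 > 2b_2$, I choose the truncation parallelogram $Q$ and an integer $s_2^* > b_2$ so that the lattice point $\bm{s} = (g_1 - 1, s_2^*)$ occupies a row for which both neighboring $11$--cells $\widehat{\A}^{11}_{g_1-1, s_2^*}$ and $\widehat{\A}^{11}_{g_1-1, s_2^* + d_2}$ are erased, as in the configuration of Figure \ref{surgery3}. Then the two maps $\widehat{\Phi}^{\pm L_2}_{s_2^*}$ out of $\widehat{\A}^{10}_{\bm{s}}$ vanish. Because $s_2^* > b_2$ gives $H(s_1, s_2^*) = H_1(s_1)$ for every $s_1$, and $s_2^* \geq g_2$ gives $H_2(s_2^*) = 0$ by Lemma \ref{genus0}, the direct incoming map $\widehat{\Phi}^{L_1}_{g_1-1, s_2^*}$ vanishes because its exponent is $H_1(g_1-1) \geq 1$. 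For the other incoming map, a short computation using Corollary \ref{symmetry3} with $l = 0$ and the knot symmetry $H_1(-s) = H_1(s) + s$ reduces the exponent of $\Phi^{-L_1}_{g_1 - 1 - d_1, s_2^*}$ to $H_1(d_1 - g_1 + 1)$, which is positive by Lemma \ref{genus0} since $d_1 - g_1 + 1 \leq g_1 - 1 < g_1$. All four hat edge maps at $\widehat{\A}^{10}_{\bm{s}}$ thus vanish, so this cell is isolated and produces an independent summand of $\widehat{HF}(S^3_{d_1, d_2}(\L), \mathfrak{s})$ besides the standard one coming from $\widehat{C}^{00}$.

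The main obstacle is the truncation bookkeeping: verifying that a single choice of $Q$ simultaneously makes $\bm{s}$ a legitimate element of $\bar{C}^{10}(\Lambda, \mathfrak{s})$ and renders both neighboring $11$--cells erased. This is precisely the configuration already exploited in the proof of Lemma \ref{gap distance}, where the hypothesis $d_2 > 2b_2$ provides the necessary room in the $s_2$--direction. The essentially new technical ingredient here is replacing $(b_1, s_2)$ with $(g_1-1, s_2^*)$ and using the knot--level symmetry that collapses the exponent of the reverse $L_1$--edge to $H_1(d_1 - g_1 + 1)$, which is exactly where the assumption $d_1 \leq 2g_1 - 2$ is used.
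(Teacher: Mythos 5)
Your proposal is correct and follows essentially the same route as the paper: assume $d_{1}\leq 2g_{1}-2$, isolate the cell $\widehat{\A}^{10}_{(g_{1}-1,\,s_{2})}$ by showing all four hat edge maps at it vanish (with the reverse $L_{1}$-edge exponent reducing to $H_{1}(d_{1}-g_{1}+1)>0$), and contradict Corollary \ref{generation2}. Your extra steps (separately treating the case $L_{2}$ unknotted and insisting on $s_{2}^{*}>b_{2}$ rather than just $H_{2}(s_{2})=0$) are harmless but not needed.
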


\begin{proof}
If $L_{1}$ is an unknot, this is straight-forward. Suppose that $L_{1}$ is not an unknot and $d_{1}\leq 2g_{1}-2$. Pick the lattice point $(g_{1}-1, s_{2})$ such that $H_{2}(s_{2})=0$ and $H(g_{1}-1, s_{2})> 0$. This is possible since $H(g_{1}-1, s_{2})\geq H(g_{1}-1, \infty)>0$.  Since $d_{2}>2b_{2}$, we can truncate the surgery complex in each Spin$^{c}$ structure as shown in   Figure \ref{surgery3}. Observe that 
$$g_{1}-1-d_{1}\geq -g_{1}+1.$$
Let $s_{1}=g_{1}-1-d_{1}$. Then $s_{1}\geq -g_{1}+1$, and  
$$H(-s_{1}, -s_{2})-H_{2}(-s_{2})=H(s_{1}, s_{2})-H_{2}(s_{2})+s_{1}\geq H(s_{1}, \infty) +s_{1}=H_{1}(-s_{1})>0. $$
Note that $H(g_{1}-1, s_{2})-H_{2}(s_{2})>0$. Hence $\widehat{\Phi}^{L_{1}}_{g_{1}-1, s_{2}}=\widehat{\Phi}^{-L_{1}}_{g_{1}-1-d_{1}, s_{2}}=0$. This indicates that $\widehat{\A}^{10}_{g_{1}-1, s_{2}}$ is the generator of $\widehat{HF}(S^{3}_{d_{1}, d_{2}}(\L), (g_{1}-1, s_{2}))$, which is a contradiction by a similar argument as before.  
Therefore, $d_{1}\geq 2g_{1}-1$. 
\end{proof}

For a 2-component L--space link with vanishing linking number, $b_{i}\geq g_{i}-1$ for $i=1$ and $2$ by Lemma \ref{boundb}. We discuss L--space surgeries for such links based on the comparison of $b_{i}$ and $g_{i}-1$.

\begin{theorem}
\label{zero linking surgery2}
Let $\L=L_{1}\cup L_{2}$ be a 2-component L--space link with vanishing linking number. Suppose that $b_{i}=g_{i}-1$ for $i=1$ and $2$. Then $S^{3}_{d_{1}, d_{2}}(\L)$ is an L--space if and only if $d_{1}>2b_{1}$ and $d_{2}>2b_{2}$. 

\end{theorem}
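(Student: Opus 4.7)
The plan is to handle the two directions separately. For the ``only if'' direction, I would assume $S^{3}_{d_{1}, d_{2}}(\L)$ is an L--space and first invoke Proposition \ref{knot link} to conclude that at least one of $S^{3}_{d_{1}}(L_{1}), S^{3}_{d_{2}}(L_{2})$ is an L--space; say $S^{3}_{d_{1}}(L_{1})$ is, so $d_{1}\geq 2g_{1}-1 = 2b_{1}+1 > 2b_{1}$. The matching bound $d_{2}>2b_{2}$ is then obtained by ruling out the remaining regimes for $d_{2}$. The case $d_{2}=0$ gives $\det \Lambda = 0$, so the surgered manifold fails to be a rational homology sphere; the case $d_{2}<0$ combined with $d_{1}>2b_{1}$ forces $\L = L_{1}\sqcup U$ by Proposition \ref{unknot component}, contradicting the (implicit) nonsplit hypothesis. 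With $d_{1}>2b_{1}$ and $d_{2}>0$ in hand, the $L_{2}$--symmetric version of Lemma \ref{gap distance2} (swap the roles of $L_{1}$ and $L_{2}$) yields $d_{2}\geq 2g_{2}-1 = 2b_{2}+1 > 2b_{2}$, as desired.

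For the ``if'' direction, I would assume $d_{1}>2b_{1}$ and $d_{2}>2b_{2}$, which forces $d_{i}\geq 2g_{i}-1$ so that both $S^{3}_{d_{i}}(L_{i})$ are L--spaces. Since the linking number is zero, $\det\Lambda\cdot\det\Lambda' = d_{1}d_{2}^{2}$ is positive whenever $d_{1}>0$, so Lemma \ref{induction} (Case I) lets us bootstrap upward in $d_{1}$, and symmetrically in $d_{2}$. Thus it is enough to establish the single base case that $S^{3}_{2g_{1}-1, 2g_{2}-1}(\L)$ is an L--space, after which iterated surgery induction covers every $(d_{1},d_{2})$ with $d_{i}\geq 2g_{i}-1$.

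This base case is where I expect the main obstacle. Here $d_{1},d_{2}>0$ and $\det\Lambda>0$, so in each Spin$^{c}$ structure $\mathfrak{u}$ the truncation is of Case 1 type and the associated CW--complex is a rectangle with all boundary cells erased (cf.\ Figure \ref{sides}), so by Corollary \ref{generation1} the free part of $HF^{-}$ is already generated by a chain in $\bar{C}^{00}(\Lambda,\mathfrak{u})$. The remaining task is to check that no torsion summand survives, i.e., that the homology of the truncated perturbed complex is exactly $\F[[U]]$. Here the hypothesis $b_{i}=g_{i}-1$ is crucial: it pins down $\Phi^{L_{i}}$ (respectively $\Phi^{-L_{i}}$) to be the identity on the exterior strip $s_{i}\geq g_{i}$ (respectively $s_{i}\leq -g_{i}$) via \eqref{maps2}, while the conjugation symmetry (Lemma \ref{symmetry2}, Corollary \ref{symmetry3}) controls the maps on the reflected side. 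A finite diagram chase in each Spin$^{c}$ class, closely mirroring the unknotted--components argument of \cite[Theorem 5.1]{GLM}, should then pair up all remaining cell generators and rule out torsion. The main difficulty is managing the interior window $|s_{1}|<g_{1}$, $|s_{2}|<g_{2}$, where the genera $g_{1},g_{2}\geq 1$ can introduce nontrivial $H$--values that are absent in the unknotted case and must be propagated through the cellular differential.
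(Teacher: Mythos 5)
Your ``only if'' direction is essentially the paper's argument and is correct: Proposition \ref{knot link} forces $(d_{1},d_{2})$ into the union of the two half-planes $d_{i}\geq 2g_{i}-1$, Proposition \ref{unknot component} (and its mirror) rules out the parts with a negative coordinate, and Lemma \ref{gap distance2} together with its $L_{1}\leftrightarrow L_{2}$ symmetric version removes the two remaining strips $0<d_{i}<2g_{i}-1$. This matches the paper step for step, including the shared reliance on an implicit non-splitness assumption guaranteeing that $b_{i}=g_{i}-1\geq 0$, i.e.\ that neither component is an unknot.

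The gap is in the ``if'' direction. You reduce to the base case $(d_{1},d_{2})=(2g_{1}-1,2g_{2}-1)$ by surgery induction and then only sketch that base case, ending with the statement that a finite diagram chase ``should then pair up all remaining cell generators,'' while explicitly flagging the interior window $|s_{i}|<g_{i}$ as an unresolved difficulty. That step is not carried out, and the difficulty you flag is illusory. The intended (and the paper's) argument needs neither the induction nor any base-case analysis: by Definition \ref{def:b}, the maps $\Phi^{\pm L_{i}}$ induce isomorphisms on homology whenever $|s_{i}|>b_{i}$, and once $d_{i}>2b_{i}$ each residue class mod $d_{i}$ contains at most one integer in $[-b_{i},b_{i}]$. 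Hence in every Spin$^{c}$ structure the truncated complex retracts, by cancelling these isomorphisms, onto a single summand $\A^{00}_{\bm{s}}$, whose homology is $\F[[U]]$ because $\L$ is an L--space link. This is what the paper means by ``large surgery formula''; it settles the free part and the absence of torsion in one stroke, with no diagram chase in the interior window and no use of $b_{i}=g_{i}-1$ (that hypothesis is needed only for the ``only if'' direction, contrary to your claim that it is ``crucial'' here). Your worry that Corollary \ref{generation1} controls only the free part is legitimate in general, but it is resolved by this retraction, not by the properties of the exterior strips you describe; as written, your proof of the forward implication is incomplete.
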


\begin{figure}[H]
\centering
\includegraphics[width=2.0in]{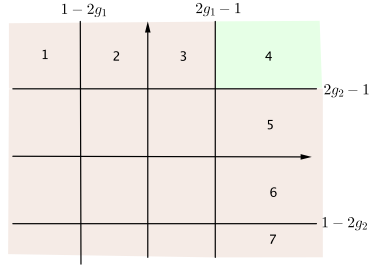} 

\caption{$b_{1}=g_{1}-1, b_{2}=g_{2}-1$ \label{F8}}
\end{figure}

\begin{proof} 
Since $b_{i}=g_{i}-1$,   $L_{i}$ is not an unknot for $i=1, 2$ and $2g_{i}-1=2b_{i}+1$. By Proposition \ref{knot link}, if $S^{3}_{d_{1}, d_{2}}(\L)$ is an L--space, then $(d_{1}, d_{2})$ must be in one of regions 1, 2, 3, 4, 5, 6, 7 in Figure \ref{F8}. For points $(d_{1}, d_{2})$ in region 4, $S^{3}_{d_{1}, d_{2}}(\L)$ is an L--space by large surgery formula. We use the green color for the region 4. By  Proposition \ref{unknot component}, points in regions 1, 2, 6,  7 won't give L--space surgeries since both of the components $L_{1}$ and $L_{2}$ are not unknots.  By Lemma \ref{gap distance2}, points in regions 3 and 5 don't give L--space surgeries. Hence $S^{3}_{d_{1}, d_{2}}(\L)$ is an L--space if and only if $d_{1}\geq 2g_{1}-1$ and $d_{2}\geq 2g_{2}-1$ in this case. 

\end{proof}

\begin{figure}[H]
\centering
\includegraphics[width=6.0in]{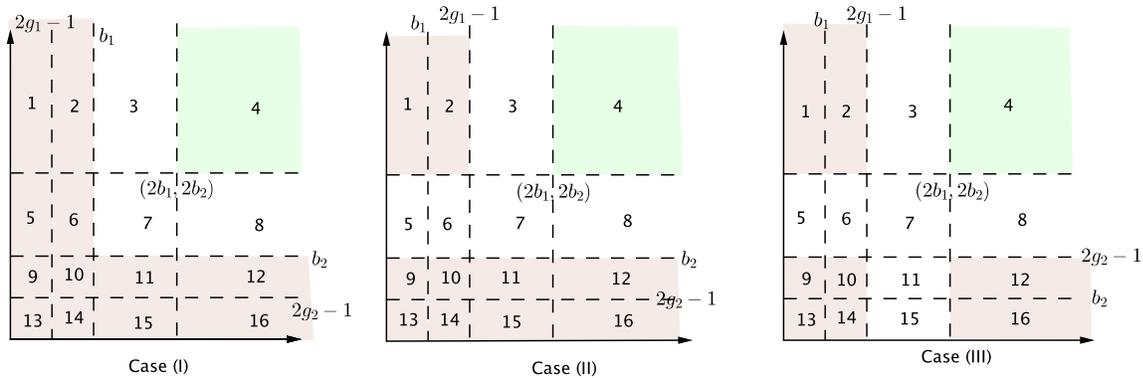}
\caption{L-space surgeries for $\L$ \label{F9}}
\end{figure}

\begin{theorem}
\label{zero linking surgeries}
Let $\L$ be a 2-component L--space link with vanishing linking number. Suppose that $b_{i}\geq g_{i}$ for $i=1$ or $2$.  The possible L--space surgeries are indicated by the white and green colored regions  in  Figure \ref{F9}. 

\end{theorem}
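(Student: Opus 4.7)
The plan is to combine the obstructions from Sections~\ref{typea} and \ref{sec:linkingnumber0} into a case analysis that matches the subcases depicted in Figure~\ref{F9}. First, since $b_i\geq g_i$ for some $i$ and the linking number is zero, Proposition~\ref{char of typeA} (with $l=0$) tells us that $\L$ is a type~(A) link. Consequently Theorem~\ref{ppositive} forces every L--space surgery to satisfy $d_1>0,\,d_2>0$ and $\det\Lambda=d_1d_2>0$; this rules out the second, third, and fourth quadrants as red. In particular, the problem reduces entirely to the first quadrant.

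Next I would invoke Proposition~\ref{knot link}, which requires one of $S^{3}_{d_1}(L_1)$, $S^{3}_{d_2}(L_2)$ to be an L--space. Because both $L_i$ are L--space knots (Theorem~\ref{L-space link pro}(a)), this forces the dichotomy $d_1\geq 2g_1-1$ or $d_2\geq 2g_2-1$, marking the lower-left box $\{d_1<2g_1-1\}\cap\{d_2<2g_2-1\}$ as red. I would then sharpen this using Lemma~\ref{gap distance2} and Lemma~\ref{gap distance}: any L--space surgery with $d_2>2b_2$ automatically satisfies both $d_1>b_1$ and $d_1\geq 2g_1-1$, and symmetrically for the roles swapped. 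This erases the horizontal strip $\{2g_1-1\leq d_1\leq b_1,\;d_2>2b_2\}$ and its vertical counterpart. Finally, the large surgery theorem (specifically \cite[Theorem~5.1]{GLM} together with Theorem~\ref{L-space link pro}) shades green the quadrant-corner $\{d_1>2b_1,\,d_2>2b_2\}$, where the surgery is always an L--space.

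What remains is to identify the precise white (undetermined) region. Here I would split into the subcases drawn in Figure~\ref{F9} depending on (a) whether $b_1\geq g_1$ or $b_1=g_1-1$, and similarly for $b_2$, and (b) whether either $L_i$ is itself an unknot. In the unknot subcases the refinement of Proposition~\ref{unknot component} and Lemma~\ref{one unknot} applies and further trims the white region: for instance if $L_2$ is an unknot, Lemma~\ref{one unknot} rules out $\{0<d_1\leq 2b_1,\;d_2>b_2\}$, while if $L_2$ is not an unknot, Proposition~\ref{unknot component} rules out the entire half-strip $\{d_1>2b_1,\,d_2<0\}$ (already absorbed by the type~(A) restriction here). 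Concatenating these exclusions with the green region pins down exactly the white region shown in the figure.

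The main obstacle I anticipate is purely bookkeeping: making sure every subcase pictured in Figure~\ref{F9} is matched by the appropriate combination of the restrictions above, and that the boundary thresholds $2g_i-1$, $b_i$, $2b_i$ are handled consistently (including the degenerate situations $g_i=1$ and $b_i=g_i$, where several thresholds coincide). No new technical input is needed beyond the already-established Propositions~\ref{char of typeA},~\ref{knot link},~\ref{unknot component}, and Lemmas~\ref{gap distance},~\ref{gap distance2},~\ref{one unknot}; the work lies entirely in an exhaustive but routine region-by-region verification.
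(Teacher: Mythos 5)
Your opening moves match the paper's: type (A) via Proposition \ref{char of typeA} forces $d_1,d_2>0$; the large surgery theorem makes $\{d_1>2b_1,\,d_2>2b_2\}$ green; Proposition \ref{knot link} kills the lower-left box; and Lemmas \ref{gap distance} and \ref{gap distance2} kill the strips $\{d_1\leq b_1 \text{ or } d_1<2g_1-1,\ d_2>2b_2\}$ and their reflections. But there is a genuine gap: these tools say nothing about the intermediate regions where \emph{neither} coordinate exceeds $2b_i$ --- in the paper's labelling of Figure \ref{F9}, Case (I), these are regions 5, 6, 9, 10, 11, 14, 15 (e.g.\ $2g_1-1\leq d_1\leq b_1$ and $b_2<d_2\leq 2b_2$). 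Lemmas \ref{gap distance} and \ref{gap distance2} only apply under the hypothesis $d_2>2b_2$ (or $d_1>2b_1$), so they cannot touch these regions, and Proposition \ref{unknot component} and Lemma \ref{one unknot} only help when a component is an unknot. Your claim that ``concatenating these exclusions \ldots pins down exactly the white region'' is therefore not justified in the generic case where neither $L_i$ is an unknot.

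The missing ingredient is the surgery induction, Lemma \ref{induction}. The paper excludes the intermediate regions by propagation: if $(d_1,d_2)$ in, say, region 6 gave an L--space, then Proposition \ref{knot link} together with $\det\Lambda\cdot\det\Lambda'>0$ lets one apply Lemma \ref{induction} to conclude $S^3_{d_1+k_1,d_2+k_2}(\L)$ is an L--space for all $k_1,k_2>0$; pushing into region 2 (already red by Lemma \ref{gap distance}) gives a contradiction. A second application of the same induction then kills regions 5, 9, 14, 15 by pushing them into the newly reddened regions 6 and 10. This propagation is also exactly what distinguishes the paper's three subcases --- which are organized by comparing $b_i$ with $2g_i-1$, not by whether $b_i=g_i-1$ or a component is unknotted as you propose --- since in Cases (II) and (III) the induction cannot reach an already-excluded region from regions 5, 6 (resp.\ 5, 6, 11, 15), and those regions genuinely remain white. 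Without Lemma \ref{induction} your argument both fails to exclude regions the figure colors red and cannot explain why certain intermediate regions stay white in some subcases but not others.
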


\begin{proof}
If $b_{i}\geq  g_{i}$ for $i=1$ or $2$, the link is type (A) by Proposition \ref{char of typeA}.  If $S^{3}_{d_{1}, d_{2}}(\L)$ is an L--space,  then  $d_{1}>0, d_{2}>0$. Based on the comparisons of $b_{i}$ and $2g_{i}-1$, we separate the discussion into three cases as shown in Figure \ref{F9}. In each figure, we use lines to separate the first quadrant into 16 regions. In Case (I), we suppose that $b_{i}>2g_{i}-1$ for $i=1$ and $2$ as shown in Figure \ref{F9}. If $(d_{1}, d_{2})$ is in region 4, then $S^{3}_{d_{1}, d_{2}}(\L)$ is an L--space by large surgery formula. So we use green color for this region. By Lemma \ref{gap distance} and Lemma \ref{gap distance2}, if $d_{i}>2b_{i}$, then $d_{i+1}> \max \{ b_{i+1}, 2g_{i+1}-2\}. $ This indicated that if $(d_{1}, d_{2})$ is in one of the regions 1, 2, 12, 16, then $S^{3}_{d_{1}, d_{2}}(\L)$ is not an L--space. We use red color to denote these regions.  If $(d_{1}, d_{2})$ is in region 6, 10 or 11, then by surgery induction (Lemma \ref{induction}), $S^{3}_{d_{1}+k_{1}, d_{2}+k_{2}}(\L)$ is also an L--space for any $k_{1}>0, k_{2}>0$, which contradicts to the fact that points in regions 2 and 12 don't give L--space surgeries . Hence no points in these three regions produce L--space surgeries. By Proposition \ref{knot link}, points in region 13 also cannot give L--space surgeries. If $(d_{1}, d_{2})$ is in region 5 or region 9, then by the surgery induction  again, $S^{3}_{d_{1}+k, d_{2}}$ is an L--space for any $k>0$. However, we know that points in regions 6 and 10 cannot produce L--space surgeries which is a contradiction. So we also use red color  for these regions to indicate the surgeries corresponding to them are not L--spaces. The similar argument works for the regions 14 and 15. Hence, we get Figure \ref{F9} Case (I). 

In Figure \ref{F9} Case (II), we suppose that $b_{1}<2g_{1}-1$ and $b_{2}>2g_{2}-1$. The argument for regions 1,2, 4, 12, 16, 13, 14, 9, 10, 11, 15 is very similar to the argument in Case (I). For points in regions 5 or 6, we cannot use the surgery  induction argument. For these points $(d_{1}, d_{2})$, it is possible that $S^{3}_{d_{1}, d_{2}}(\L)$ is  an L--space. In Figure \ref{F9} Case (III), we suppose that $b_{i}\leq  2g_{i}-1$ for both $i=1, 2$. We  prove that points in regions shaded by the red color won't give L--space surgeries by the similar argument to the one in Case (I). For points $(d_{1}, d_{2})$ in regions 5, 6, 11 and 15, it is possible that $S^{3}_{d_{1}, d_{2}}(\L)$ is an L--space. 
\end{proof}

\begin{corollary}
Assume that $\L=L_{1}\cup L_{2}$ is an L--space link with vanishing linking number and $L_{2}$ is unknot. If $S^{3}_{d_{1}, d_{2}}(\L)$ is an L--space, then $(d_{1}, d_{2})$ is in region 4 and 8 in Figure \ref{F9}.  
\end{corollary}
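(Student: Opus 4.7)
The strategy is to combine the region-by-region analysis of Theorem \ref{zero linking surgeries} with the extra restriction that Lemma \ref{one unknot} gives when $L_2$ is an unknot.

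Since $L_2$ is an unknot, $g_2=0$ and $H_{L_2}(s_2)=(s_2-|s_2|)/2$, so $b_2\geq g_2-1=-1$; equality forces $\L$ to split as $L_1\sqcup U$, in which case $S^{3}_{d_1,d_2}(\L)\cong S^{3}_{d_1}(L_1)\#L(d_2,1)$ and the statement is immediate from the K\"unneth description of L-space connected sums.  So I may assume $b_2\geq 0$, in which case the link falls under scenario (I) or (II) of Theorem \ref{zero linking surgeries} (scenario (III) would require $b_2\leq 2g_2-1=-1$), and that theorem already confines any L-space surgery slope $(d_1,d_2)$ to the white or green regions of Figure \ref{F9}.

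I would then apply Lemma \ref{one unknot}: since $L_2$ is an unknot, any L-space surgery with $d_1>0$ and $d_2>b_2$ must actually satisfy $d_1>2b_1$.  This rules out every white region sitting inside the strip $0<d_1\leq 2b_1,\,d_2>b_2$, namely the intermediate regions 5 and 6 (and 11, 15 in the Case~(III) layout that cannot occur here).  What survives is precisely region 4 together with region 8, the latter being the strip $d_1>2b_1,\, d_2\leq 2b_2$ which Corollary \ref{cor:unknot} identifies as the only place where an L-space surgery with $d_2\leq 0$ can still be compatible with $L_2$ being unknotted.

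The main obstacle is bookkeeping: I need to confirm, for each of the two surviving subcases of Figure \ref{F9}, that the labels 5, 6, 11, 15 are exactly the white regions satisfying $0<d_1\leq 2b_1$ and $d_2>b_2$, and that region 8 is exactly the strip $d_1>2b_1$ lying below the large-surgery region 4. Once this figure identification is fixed, the proof itself is just the three invocations above: K\"unneth in the split case, Theorem \ref{zero linking surgeries} to cut down to the white/green regions, and Lemma \ref{one unknot} to remove the intermediate $d_1$-strip.
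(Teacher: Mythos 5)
Your proposal follows essentially the same route as the paper: because $L_{2}$ is unknotted only Cases (I) and (II) of Figure \ref{F9} can occur, Theorem \ref{zero linking surgeries} already confines any L--space slope to the white and green regions, and Lemma \ref{one unknot} then kills regions 5 and 6, leaving regions 4 and 8. The one place you diverge is the degenerate case $b_{2}=g_{2}-1=-1$, and there your conclusion is backwards: that equality does force $\L=L_{1}\sqcup U$, but then $S^{3}_{d_{1},d_{2}}(\L)\cong S^{3}_{d_{1}}(L_{1})\# L(d_{2},1)$ is an L--space for \emph{every} $d_{2}\neq 0$ once $d_{1}\geq 2g_{1}-1$, including $d_{2}<0$, which lies outside the first-quadrant regions 4 and 8 — so Künneth does not give the statement in that case, it gives a counterexample. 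The paper sidesteps this by reading the corollary in the context of the preceding theorem (nontrivial, non-split, so $b_{2}\geq g_{2}=0$ and the link is type (A)); under that reading your extra case is moot, but the Künneth sentence as written should be removed or replaced by the observation that the split case is excluded by hypothesis.
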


\begin{proof}
If $L_{2}$ is an unknot, we only need to consider Case (I) and Case (II) in Figure \ref{F9}. It suffices to consider the points in regions 5 and 6. By Lemma \ref{one unknot}, if $d_{2}>b_{2}$, then $d_{1}>2b_{1}$. Hence, $S^{3}_{d_{1}, d_{2}}(\L)$  cannot be an L--space for points in regions 5 and 6. 
\end{proof}

\medskip

\subsection{2-component L--space links with explicit descriptions of L--space surgeries}
\label{expdes}
In this section, we describe the L--space surgery set for some 2-component L--space links explicitly. They all have maximal lattice points, hence are type (A) L--space links.

For a 2-component L--space link $\L$, if there exists a maximal lattice point $\bm{s}=(s_{1}, s_{2})\in \H(\L)$, by the definition,  
$$H(s_{1}, s_{2})=1, \quad H(s_{1}+1, s_{2})=H(s_{1}, s_{2}+1)=H(s_{1}+1, s_{2}+1)=0.$$
Then $\chi(HFL^{-}(s_{1}+1, s_{2}+1))=-1$, and it is the coefficient of the term $t_{1}^{s_{1}+1/2}t_{2}^{s_{2}+1/2}$ in the symmetrized Alexander polynomial $\Delta_{\L}(t_{1}, t_{2})$ by \eqref{Alex} and \eqref{computeh}.  

\begin{proposition}
\label{symmetry}
Suppose that $\L=L_{1}\cup L_{2}$ is an L--space link with a maximal lattice point $(s_{1}, s_{2})\in \H(\L)$, and the coefficient of $t_{1}^{-s_{1}-1/2}t_{2}^{s_{2}+1/2}$ in the symmetrized Alexander polynomial $\Delta_{\L}(t_{1}, t_{2})$ is also nonzero. If $S^{3}_{d_{1}, d_{2}}(\L)$ is an L--space with $d_{2}>2b_{2}$, then $d_{1}>2s_{1}$. 

\end{proposition}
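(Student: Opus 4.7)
My plan is to derive a contradiction from the assumption $d_{1}\leq 2s_{1}$ by producing, in some $\textup{Spin}^{c}$-structure $\mathfrak{u}$, a second homology generator of $\widehat{HF}(S^{3}_{d_{1},d_{2}}(\L),\mathfrak{u})$ lying in $\widehat{C}^{10}$, which contradicts both the L--space hypothesis and Corollary \ref{generation2}(a).

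First I set the stage. The existence of the maximal lattice point $(s_{1},s_{2})$ forces $\L$ to be of type (A) (as noted after Proposition \ref{char of typeA}), so by Theorem \ref{ppositive} any L--space surgery automatically has $d_{1}>0,d_{2}>0,\det\Lambda>0$, placing us in Case 1 of Section \ref{perturbed}. Hence by Corollary \ref{generation2}(a) the generator of $\widehat{HF}(S^{3}_{d_{1},d_{2}}(\L),\mathfrak{u})$ must be a chain in $\widehat{C}^{00}(\Lambda,\mathfrak{u})$ for every $\mathfrak{u}$.

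Next I extract the key consequence of hypothesis $(*)$. By \eqref{Alex}, the nonvanishing of the coefficient of $t_{1}^{-s_{1}-1/2}t_{2}^{s_{2}+1/2}$ in $\Delta_{\L}$ is equivalent to $\chi(HFL^{-}(\L,(-s_{1},s_{2}+1)))\neq 0$. Expanding via \eqref{computeh}, this forces either $H(-s_{1}-1,s_{2})>H(-s_{1},s_{2})$ (Case B) or $H(-s_{1}-1,s_{2}+1)>H(-s_{1},s_{2}+1)$ (Case A); in either case Lemma \ref{growth control} gives $H(-s_{1}-1,s_{2})\geq 1$, whereas $H(\infty,s_{2})=H_{2}(s_{2}-l/2)=0$ (since $s_{2}\geq l/2+g_{2}$). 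Setting $\bm{q}:=(-s_{1}-1,s_{2})$ and using the formula \eqref{maps2}, this yields $\widehat{\Phi}^{L_{1}}_{\bm{q}}=0$.

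Third, I exploit the truncation. Because $d_{2}>2b_{2}$, the parallelogram $Q$ can be taken tall enough that, in the Spin$^{c}$-structure $\mathfrak{u}$ of $\bm{q}$, the two points $\bm{q}$ and $\bm{q}+\Lambda_{2}$ lie outside $S^{11}$; thus the $\widehat{\A}^{11}$-cells targeted by $\widehat{\Phi}^{L_{2}}_{q_{2}}$ and $\widehat{\Phi}^{-L_{2}}_{q_{2}}$ out of $\widehat{\A}^{10}_{\bm{q}}$ are erased. Consequently $\widehat{\A}^{10}_{\bm{q}}$ is a cycle in the truncated complex.

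Fourth, and this is the crux, I use the contradiction hypothesis $d_{1}\leq 2s_{1}$ to kill the remaining incoming edge map. Consider $\bm{q}-\Lambda_{1}=(-s_{1}-1-d_{1},s_{2}-l)$, which lies in the same $\mathfrak{u}$; the edge into $\widehat{\A}^{10}_{\bm{q}}$ is $\Phi^{-L_{1}}_{\bm{q}-\Lambda_{1}}=U^{H(s_{1}+1+d_{1},l-s_{2})-H(\infty,l-s_{2})}$. Applying Lemma \ref{symmetry2} to rewrite
\[
H(s_{1}+1+d_{1},l-s_{2})=H(-s_{1}-1-d_{1},s_{2}-l)-(s_{1}+1+d_{1})-(l-s_{2}),
\]
and noting $H(\infty,l-s_{2})=s_{2}-l/2$, the exponent simplifies to $H(-s_{1}-1-d_{1},s_{2}-l)-(s_{1}+1+d_{1}+l/2)$. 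Since $H(-s_{1}-1-d_{1},s_{2}-l)\geq H(-s_{1}-1-d_{1},\infty)=s_{1}+1+d_{1}+l/2$ automatically, the task reduces to propagating the strict jump from condition $(*)$ at $(-s_{1}-1,s_{2})$ down to $(-s_{1}-1-d_{1},s_{2}-l)$, i.e., showing the $s_{2}$-direction has not yet saturated at horizontal coordinate $-s_{1}-1-d_{1}$. Using monotonicity (Lemma \ref{growth control}) and Corollary \ref{symmetry3} together with the bound $s_{1}-d_{1}\geq -s_{1}$, this jump carries through and yields $\widehat{\Phi}^{-L_{1}}_{\bm{q}-\Lambda_{1}}=0$.

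With both incoming maps zero and $\widehat{\A}^{10}_{\bm{q}}$ a cycle, it represents a nontrivial class outside $\widehat{C}^{00}$, producing a second generator and contradicting Corollary \ref{generation2}(a). The main obstacle is step four: turning the single strict jump provided by $(*)$ at one lattice point into a strict jump at the translate by $-\Lambda_{1}$, where the translation amount is precisely controlled by the hypothesis $d_{1}\leq 2s_{1}$. All other steps are direct computations from \eqref{maps2} and the symmetries of $H_{\L}$.
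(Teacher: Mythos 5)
Your overall strategy (contradiction via a nontrivial cycle in $\widehat{C}^{10}$, after invoking Theorem \ref{ppositive} and Corollary \ref{generation2}) matches the paper's, but your step four --- which you correctly identify as the crux --- has a genuine gap. After the reductions, what you need is
$H(-s_{1}-1-d_{1},\,s_{2}-l)>H(-s_{1}-1-d_{1},\,\infty)$, i.e.\ that the $H$-function at the column $-s_{1}-1-d_{1}$ has not yet saturated to its vertical limit by height $s_{2}-l$. You assert this follows by ``propagating the strict jump'' from column $-s_{1}$ (or $-s_{1}-1$) leftward, but no such propagation principle holds: for the Whitehead link one has $H(0,0)=1>0=H(0,\infty)$ while $H(-1,0)=1=H(-1,\infty)$, so a vertical non-saturation at one column can vanish at the adjacent column to its left. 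Since $d_{1}$ can be anywhere in $(0,2s_{1}]$, your argument needs control of $H$ at columns as far left as $-3s_{1}-1$, and the Alexander-coefficient hypothesis only gives information at the single column $-s_{1}$ (resp.\ $-s_{1}-1$). A related symptom is that your step two extracts only ``$H(-s_{1}-1,s_{2})\geq 1$'' from the hypothesis, which is automatic (indeed $H(-s_{1}-1,s_{2})\geq H(-s_{1}-1,\infty)=s_{1}+1+l/2\geq 1$), so the hypothesis is never genuinely used where it must be --- in killing the incoming $\widehat{\Phi}^{-L_{1}}$ map.

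The missing idea is the paper's normalization step: since $\L$ is type (A) and $S^{3}_{d_{1}}(L_{1})$ or $S^{3}_{d_{2}}(L_{2})$ is an L--space, Lemma \ref{induction} lets one increase $d_{1}$ to exactly $2s_{1}$ (or $2s_{1}+1$, depending on the sign of $\chi(HFL^{-}(-s_{1},s_{2}+1))$) while preserving the L--space property. With $d_{1}=2s_{1}$ the left neighbour of the maximal lattice point $(s_{1},s_{2})$ is precisely $(s_{1},s_{2})-\Lambda_{1}=(-s_{1},s_{2}-l)$, and the exponent of the incoming map computes (via Lemma \ref{symmetry2}) to $H(-s_{1},s_{2}-l)-H(-s_{1},\infty)\geq H(-s_{1},s_{2})-H(-s_{1},\infty)$, which is strictly positive exactly because the nonzero coefficient of $t_{1}^{-s_{1}-1/2}t_{2}^{s_{2}+1/2}$ forces a vertical drop of $H$ at the column $-s_{1}$ at height $s_{2}$. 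In other words, the surgery induction is what aligns the lattice point where the hypothesis lives with the lattice point the differential actually sees; without it, the argument cannot close.
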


\begin{proof}
By Theorem \ref{ppositive}, if $S^{3}_{d_{1}, d_{2}}(\L)$ is an L--space, then $d_{1}>0, d_{2}>0$ and $\det \Lambda>0$. Since the coefficient of $t_{1}^{-s_{1}-1/2}t_{2}^{s_{2}+1/2}$ is nonzero, $\chi(HFL^{-}(-s_{1}, s_{2}+1))=\pm 1$. We first assume that $\chi(HFL^{-}(-s_{1}, s_{2}+1))= 1$. Then 
$$H(-s_{1}-1, s_{2})=H(-s_{1}-1, s_{2}+1)=H(-s_{1}, s_{2})=b+1, \quad H(-s_{1}, s_{2}+1)=b$$
for some $b\geq 0$. It is not hard to see that  $b_{2}\geq g_{2}+l/2$ by a similar argument to the one in Proposition \ref{char of typeA}. So $d_{2}\geq 2g_{2}-1$. If $S^{3}_{d_{1}, d_{2}}(\L)$ is an L--space, by  Lemma \ref{induction}, $S^{3}_{d_{1}+k, d_{2}}(\L)$ is also an L--space for all $k\geq 0$.  If $d_{1}\leq 2s_{1}$, then $S^{3}_{2s_{1}, d_{2}}(\L)$ is also an L--space. We can truncate the surgery complex to the rectangle with sides erased as in Figure \ref{surgery3} in the Spin$^{c}$ structure $(s_{1}, s_{2})$. Note that 
$$\Phi^{L_{1}}_{s_{1}, s_{2}}=U^{H(s_{1}, s_{2})-H(\infty, s_{2})}, \quad \Phi^{-L_{1}}_{-s_{1}, s_{2}}=U^{H(s_{1}, -s_{2})-H(\infty, -s_{2})}=U^{H(-s_{1}, s_{2})-H(\infty, s_{2}+l)-s_{1}+l/2}.$$
Here  $H(\infty, s_{2}+l)\leq H(\infty, s_{2})\leq H(s_{1}+1, s_{2})=0$. So  
\begin{eqnarray*}
	H(s_{1}, -s_{2})-H(\infty, -s_{2}) &=& H(-s_{1}, s_{2})-s_{1}+l/2\\
	&\geq & H(-s_{1}, \infty)+1-s_{1}+l/2 \\
	&=& H_{1}(-s_{1}-l/2)-s_{1}+l/2+1\\
	&=& H_{1}(s_{1}+l/2)+l+1>0.
\end{eqnarray*}
Hence, $\widehat{\Phi}^{L_{1}}_{s_{1}, s_{2}}=\widehat{\Phi}^{-L_{1}}_{-s_{1}, s_{2}}=0$. This indicates that $\widehat{\A}^{10}_{s_{1}, s_{2}}$ generates  $\widehat{HF}(S^{3}_{d_{1}, d_{2}}(\L), (s_{1}, s_{2}))$,  contradicting to Corollary \ref{generation2}. Thus, we have $d_{1}>2s_{1}$. 

Next we consider the case that $\chi(HFL^{-}(-s_{1}, s_{2}+1))=-1$. Then 
$$H(-s_{1}-1, s_{2})=b+1, \quad H(-s_{1}, s_{2})=H(-s_{1}, s_{2}+1)=H(-s_{1}-1, s_{2}+1)=b,$$
for some $b\geq 0$. By Lemma \ref{induction}, $S^{3}_{2s_{1}+1, d_{2}}(\L)$ is an L--space.  By a similar argument, we have
$$\widehat{\Phi}^{L_{1}}_{s_{1}, s_{2}}=\widehat{\Phi}^{-L_{1}}_{-s_{1}-1, s_{2}}=0,$$
which  contradicts to Corollary \ref{generation2} by a similar argument. Hence, $d_{1}>2s_{1}$. 
\end{proof}

\begin{remark}
The similar result holds for $d_{2}$. 
\end{remark}

\begin{corollary}
\label{symmetry gap}
Suppose that $\L=L_{1}\cup L_{2}$ is an L--space link with $b_{1}=s_{1}$ for some maximal lattice point $(s_{1}, s_{2})\in \H(\L)$, and the coefficient of $t_{1}^{-s_{1}-1/2}t_{2}^{s_{2}+1/2}$ in the symmetrized Alexander polynomial $\Delta_{\L}(t_{1}, t_{2})$ is also nonzero. If $S^{3}_{d_{1}, d_{2}}(\L)$ is an L--space with $d_{2}>2b_{2}$, then $d_{1}>2b_{1}$. 
\end{corollary}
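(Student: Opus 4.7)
The plan is to obtain this as an almost immediate consequence of Proposition \ref{symmetry}. That proposition already establishes, under exactly the same hypotheses on the maximal lattice point $(s_1,s_2)$ and on the nonvanishing of the coefficient of $t_1^{-s_1-1/2}t_2^{s_2+1/2}$ in $\Delta_\L(t_1,t_2)$, that any L--space surgery $S^3_{d_1,d_2}(\L)$ with $d_2>2b_2$ must satisfy $d_1>2s_1$. So the only additional ingredient here is the extra assumption $b_1=s_1$, which lets us upgrade the conclusion $d_1>2s_1$ to $d_1>2b_1$ by direct substitution.

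More concretely, I would first verify that the hypotheses of Proposition \ref{symmetry} are in force: $(s_1,s_2)$ is a maximal lattice point of $\H(\L)$, and the coefficient of $t_1^{-s_1-1/2}t_2^{s_2+1/2}$ in $\Delta_\L(t_1,t_2)$ is nonzero. Under the assumption that $S^3_{d_1,d_2}(\L)$ is an L--space with $d_2>2b_2$, Proposition \ref{symmetry} then yields
\[
d_1 > 2s_1.
\]
Invoking the extra hypothesis $b_1=s_1$ gives $d_1>2b_1$, which is the desired conclusion.

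Since the statement is genuinely a corollary of Proposition \ref{symmetry}, there is no real obstacle; the main ``content'' of the argument lives in Proposition \ref{symmetry} itself, where the truncated surgery complex at the Spin$^c$ structure $(s_1,s_2)$ is analyzed and the generator of $\widehat{HF}(S^3_{d_1,d_2}(\L),(s_1,s_2))$ is forced to lie in the ``wrong'' summand $\widehat{\A}^{10}_{s_1,s_2}$ unless $d_1>2s_1$, contradicting Corollary \ref{generation2}. The only nontrivial observation for the present corollary is that maximal lattice points give upper bounds on $b_1$ of the form $b_1\ge s_1$, and the hypothesis $b_1=s_1$ is then precisely what makes the bound from Proposition \ref{symmetry} sharp in terms of $b_1$ rather than in terms of the coordinate of some particular maximal point. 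So the proposal is: quote Proposition \ref{symmetry}, substitute $s_1=b_1$, and conclude.
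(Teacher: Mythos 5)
Your proposal is correct and matches the paper's proof, which likewise derives the corollary directly from Proposition \ref{symmetry} by substituting $b_{1}=s_{1}$ into the conclusion $d_{1}>2s_{1}$. Nothing further is needed.
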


\begin{proof}
It is straight-forward from Proposition \ref{symmetry}. 

\end{proof}

\begin{corollary}
\label{if and only if2}
Let $\L=L_{1}\cup L_{2}$ be an L--space link with $b_{1}=s_{1}$ and $b_{2}=s'_{2}$ for some maximal lattice points  $(s_{1}, s_{2})$ and $(s'_{1}, s'_{2})$.  Suppose that the coefficients of $t_{1}^{-s_{1}-1/2}t_{2}^{s_{2}+1/2}$ and $t_{1}^{s'_{1}+1/2}t_{2}^{-s'_{2}-1/2}$ in the symmetrized Alexander polynomial $\Delta_{\L}(t_{1}, t_{2})$ are nonzero. Then $S^{3}_{d_{1}, d_{2}}(\L)$ is an L--space if and only if $d_{1}>2b_{1}$ and $d_{2}>2b_{2}$. 
\end{corollary}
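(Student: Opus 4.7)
The plan is to prove both implications, using Corollary \ref{symmetry gap} together with surgery induction for the ``only if'' direction, and the controlled structure of the truncated surgery complex for the ``if'' direction.

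For the ``only if'' direction, suppose $S^{3}_{d_{1},d_{2}}(\L)$ is an L--space. The existence of maximal lattice points makes $\L$ a type (A) L--space link (as noted in the paragraph preceding Lemma \ref{knot L--space}), so Theorem \ref{ppositive} gives $d_{1}, d_{2} > 0$ and $\det \Lambda > 0$. Apply Lemma \ref{induction}: for any integer $k \geq 0$, $S^{3}_{d_{1}, d_{2}+k}(\L)$ is also an L--space, and $\det\Lambda' = d_{1}(d_{2}+k) - l^{2}$ remains positive, so we stay in Case I. Choose $k$ large enough that $d_{2} + k > 2b_{2}$. Then Corollary \ref{symmetry gap}, applied to this enlarged surgery using the maximal lattice point $(s_{1}, s_{2})$ with $b_{1} = s_{1}$ and the nonvanishing coefficient of $t_{1}^{-s_{1}-1/2} t_{2}^{s_{2}+1/2}$ in $\Delta_{\L}$, forces $d_{1} > 2b_{1}$. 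Now swap the roles of $L_{1}$ and $L_{2}$: increase $d_{1}$ by a large integer, apply the symmetric statement of Corollary \ref{symmetry gap} with the maximal lattice point $(s'_{1}, s'_{2})$ satisfying $b_{2} = s'_{2}$ and the nonvanishing coefficient of $t_{1}^{s'_{1}+1/2} t_{2}^{-s'_{2}-1/2}$, to conclude $d_{2} > 2b_{2}$.

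For the ``if'' direction, assume $d_{1} > 2b_{1}$ and $d_{2} > 2b_{2}$. We are in Case 1 of Section \ref{perturbed}, so by Corollary \ref{generation2}(a) the free part of $\widehat{HF}(S^{3}_{d_{1},d_{2}}(\L), \mathfrak{u})$ is generated by a chain in $\widehat{C}^{00}(\Lambda, \mathfrak{u})$ for each Spin$^{c}$-structure $\mathfrak{u}$. By Definition \ref{def:b}, whenever $s_{1} > b_{1}$ one has $H(s_{1}, s_{2}) = H(\infty, s_{2})$, so $\widehat{\Phi}^{L_{1}}_{s_{1},s_{2}}$ acts as the identity at every lattice point on the right edge of the truncation parallelogram; Corollary \ref{symmetry3} combined with the bound $d_{1} > 2b_{1}$ yields the analogous statement for $\widehat{\Phi}^{-L_{1}}$ along the left edge, and the top/bottom edges are handled symmetrically via $b_{2}$ and $d_{2} > 2b_{2}$. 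The associated CW-complex therefore reduces to Case (a) of Figure \ref{sides}, whose relative homology is $\F$ generated by the sum of all $2$-cells, yielding $\widehat{HF}(S^{3}_{d_{1},d_{2}}(\L), \mathfrak{u}) \cong \F$ in each $\mathfrak{u}$, i.e.\ L--spaceness.

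The main obstacle is the ``if'' direction at the threshold, where one must verify that the bounds $d_{i} > 2b_{i}$ simultaneously trivialize \emph{all four} families $\widehat{\Phi}^{\pm L_{i}}$ of edge maps on the boundary of the truncation parallelogram. The two Alexander coefficient hypotheses are precisely what pin $b_{1}$ and $b_{2}$ to genuine maximal lattice points (rather than to points farther out), so that the thresholds $2b_{1}$ and $2b_{2}$ are at once necessary (via the ``only if'' argument above) and sufficient (via the edge-map collapse), and the two directions match exactly.
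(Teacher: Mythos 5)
Your overall strategy for the ``only if'' direction matches the paper's (surgery induction to push one coefficient past $2b_{2}$, then Corollary \ref{symmetry gap}), but there is a genuine gap in how you invoke Lemma \ref{induction}. That lemma requires \emph{both} $S^{3}_{d_{1},d_{2}}(\L)$ \emph{and} the relevant sublink surgery to be L--spaces: to increase $d_{2}$ you must first know that $S^{3}_{d_{1}}(L_{1})$ is an L--space, and to increase $d_{1}$ you must know that $S^{3}_{d_{2}}(L_{2})$ is. You never verify either hypothesis (and your computation ``$\det\Lambda'=d_{1}(d_{2}+k)-l^{2}$'' misreads $\Lambda'$, which is the $1\times 1$ framing matrix of the sublink, here just $(d_{1})$, not the enlarged full matrix). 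The paper supplies the missing input via Lemma \ref{knot L--space}: since $\L$ has maximal lattice points it is type (A), so at least one of $S^{3}_{d_{1}}(L_{1})$, $S^{3}_{d_{2}}(L_{2})$ is an L--space. Crucially only \emph{one} of the two is guaranteed, so you cannot freely choose which coordinate to push first; the paper assumes WLOG that $S^{3}_{d_{1}}(L_{1})$ is an L--space, increases $d_{2}$ to $2b_{2}+1$, deduces $d_{1}>2b_{1}$ from Corollary \ref{symmetry gap}, and then obtains $d_{2}>2b_{2}$ from the symmetric statement of that corollary applied to the original surgery, \emph{without} any further induction. Your ``swap the roles and increase $d_{1}$'' step would again need the unverified hypothesis that $S^{3}_{d_{2}}(L_{2})$ is an L--space, and is in any case unnecessary once $d_{1}>2b_{1}$ is known.

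For the ``if'' direction the paper simply cites the large surgery formula: $d_{i}>2b_{i}$ is exactly the threshold at which the truncated complex in each Spin$^{c}$ structure retracts onto a single $\A^{00}_{\bm{s}}$, whose homology is $\F[[U]]$ because $\L$ is an L--space link. Your CW-complex argument is in the right spirit, but as written it only controls the \emph{free} part of $HF^{-}$ --- Theorem \ref{cellhom} and Corollary \ref{generation2} say nothing about $U$-torsion --- so it does not by itself establish L--spaceness. You should either quote the large surgery theorem directly, as the paper does, or argue that the truncated complex in each Spin$^{c}$ structure is quasi-isomorphic to a single $\A^{00}_{\bm{s}}$.
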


\begin{proof}
The ``if " part is straightforward by large surgery formula. Now we prove the ``only if " part. Since there exist maximal lattice points for the link $\L$, it is type (A). By Lemma \ref{knot L--space}, if $S^{3}_{d_{1}, d_{2}}(\L)$ is an L--space, then either  $S^{3}_{d_{1}}(L_{1})$  or $S^{3}_{d_{2}}(L_{2})$ is an L--space. Without loss of generality, we assume that $S^{3}_{d_{1}}(L_{1})$ is an L--space. By the surgery induction (Lemma \ref{induction}), $S^{3}_{d_{1}, d_{2}+k}(\L)$ is an L--space for any $k>0$. So $S^{3}_{d_{1}, 2b_{2}+1}(\L)$ is an L--space. By Corollary \ref{symmetry gap}, $d_{1}>2b_{1}$. Similarly, we obtain that  $d_{2}>2b_{2}$ . Hence, $d_{1}> 2b_{1}$ and $d_{2}>2b_{2}$.

\end{proof}

\begin{example}

The mirror $\L=L_{1}\cup L_{2}$ of $L7a3$ is a 2-component L--space link with linking number 0, where $L_{1}$ is the right-handed trefoil and $L_{2}$ is the unknot \cite{Liu}. Its Alexander polynomial equals
$$\Delta_{\L}(t_{1}, t_{2})=-(t_{1}^{1/2}-t_{1}^{-1/2})(t_{2}^{1/2}-t_{2}^{-1/2})(t_{2}+t_{2}^{-1}). $$
In \cite[Example 4.4]{Liu2}, we compute its $H$-function, and get  $b_{1}=0$ and $b_{2}=1$. Its Alexander polynomial satisfies the assumptions in Corollary \ref{if and only if2}. Hence $S^{3}_{d_{1}, d_{2}}(\L)$ is an L--space if and only if $d_{1}>0, d_{2}>2$. 
\end{example}

In the rest of the section, we consider cables on links $\L$ which  satisfy the assumptions in Corollary \ref{if and only if2}. 
Let $\L=L_{1}\cup \cdots L_{n}\subset S^{3}$ be an L--space link. Let $p, q$ be the coprime positive integers. The link $\L_{p, q}=L_{(p, q)}\cup L_{2}\cup \cdots \cup L_{n}$ is an L--space link if $q/p$ is sufficiently large \cite[Proposition 2.8]{BG}. 

Given coprime positive integers $p, q$, define the map $T: \mathbb{R} \rightarrow \mathbb{R}$ as:
$$T(s)=ps+(p-1)(q-1)/2.$$

\begin{lemma}
\label{tranb}
Let $\L=L_{1}\cup L_{2}$ denote an L--space link, and   $\L_{p, q} $ denote its cable link where $p, q$ are coprime positive integers with $q/p$ sufficiently large. Then   $b_{2}(\L_{p, q})=b_{2}(\L)$ and 
\begin{equation}
\label{cableb}
b_{1}(\L_{p, q}) = \left\{
        \begin{array}{ll}
           T(b_{1}(\L)+1)-1 & \quad \textup{ if } \L \textup{ is type (B) or it is type (A) and } l \textup{ is even, } \\
           T(b_{1}(\L)+1/2)-1/2 & \quad  \textup{ if } \L \textup{ is type (A) and } l \textup{ is odd. }
        \end{array}
    \right. 
\end{equation}
\end{lemma}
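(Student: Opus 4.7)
The strategy is to reduce the computation of $b_1(\L_{p,q})$ and $b_2(\L_{p,q})$ to the corresponding quantities for $\L$ via the cabling formula for the $H$-function. For $q/p$ sufficiently large (depending on the genera and similar invariants of $L_1$), Borodzik--Gorsky's cabling results \cite{BG} show that in the appropriate ``essential range'', the $H$-function of $\L_{p,q}$ is recovered from that of $\L$ by the affine map $T$ acting in the first coordinate, together with the natural shift in the second coordinate induced by the change in linking number from $l$ to $pl$; outside the essential range, the behavior is inherited from the $(p,q)$-torus knot component. Since \eqref{cableb} is a statement about the stabilization locations of the $H$-function, this cabling formula is exactly the tool needed.

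First I would establish $b_2(\L_{p,q})=b_2(\L)$. The key observation is that the second component is unchanged by cabling the first. Translating the stabilization condition $H_{\L_{p,q}}(s_1,s_2)=H_{\L_{p,q}}(s_1,\infty)$ for all $s_1$ via the cabling formula yields the analogous condition for $H_\L$, and the lattice shift $s_2\mapsto s_2+(p-1)l/2$ between $\H_2(\L)$ and $\H_2(\L_{p,q})$ is absorbed by the ceiling $\lceil s_2-1\rceil$ in the definition of $b_2$. So the minimum value of $\lceil s_2-1\rceil$ over witnessing lattice points agrees for $\L$ and $\L_{p,q}$.

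Next I would identify the smallest $s_1\in\H_1(\L_{p,q})$ that witnesses $H_{\L_{p,q}}(s_1,s_2)=H_{\L_{p,q}}(\infty,s_2)$ for all $s_2$. By the cabling formula, if $s_1'$ is the smallest such witness for $\L$, then $T(s_1')$ (or the adjacent lattice point in $\H_1(\L_{p,q})=\Z+pl/2$) is the smallest witness for $\L_{p,q}$. The case split in \eqref{cableb} then comes from reading off $s_1'$ from $b_1(\L)$: for type (B) links, or type (A) links with $l$ even, the lattice $\H_1(\L)$ is $\Z$ (after the relevant shift), so $s_1'=b_1(\L)+1$ is an integer and $\lceil T(b_1(\L)+1)-1\rceil=T(b_1(\L)+1)-1$. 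For type (A) with $l$ odd, Proposition \ref{char of typeA} combined with Lemma \ref{boundb} forces the smallest witness to be the half-integer $s_1'=b_1(\L)+1/2$, and a direct parity check shows $T(b_1(\L)+1/2)\in \Z+pl/2$, so $\lceil T(b_1(\L)+1/2)-1\rceil=T(b_1(\L)+1/2)-1/2$.

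The main obstacle will be to verify the cabling formula uniformly across the range of lattice points needed to detect stabilization. One needs $q/p$ large enough that the ``essential range'' of the cabling formula includes every lattice point at which $H_\L(\cdot,s_2)$ differs from $H_\L(\infty,s_2)$, so that no information is lost by passing to the cable; and one must track carefully the parity conventions relating $\H(\L)$ and $\H(\L_{p,q})$, since both the type (A)/(B) dichotomy and the parity of $l$ enter the formula. Once these are pinned down, the two formulas in \eqref{cableb} follow by direct computation from the ceiling operation applied to $T$.
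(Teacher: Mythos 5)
Your overall strategy---transfer the stabilization locus of the $H$-function through a cabling relation---is the right idea, but as written the proposal has two genuine gaps.

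First, the whole argument rests on a ``cabling formula for the $H$-function'' that you attribute to \cite{BG}. That reference is invoked (here and in the paper) only for the fact that $\L_{p,q}$ is an L--space link when $q/p\gg 0$; it does not supply a formula expressing $H_{\L_{p,q}}$ in terms of $H_{\L}$ via $T$, with or without an ``essential range.'' Establishing that transfer is precisely the technical content of the lemma, so you cannot offload it to a citation. The paper carries it out at the level of the Alexander polynomial: for an L--space link the $H$-function is determined by $\tilde{\Delta}_{\L}$ through \eqref{comh} together with the sublink boundary conditions, so $b_1$ can be characterized by the vanishing of the coefficients of $\tilde{\Delta}_{\L}$ on a quadrant $\bm{y}\succ\bm{s}$; Turaev's cabling formula \cite{Tu} for $\Delta_{\L_{p,q}}(t_1,t_2)$ together with the transfer statement of \cite[Theorem 4.7]{Liu2} then locates the corresponding vanishing quadrant for $\L_{p,q}$, and $b_1(\L_{p,q})=\lceil T(s_1+1)-1\rceil$ falls out. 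You would need either to prove your $H$-function formula (essentially by this route) or to point to a precise statement in the literature; as it stands the key step is assumed.

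Second, your explanation of the case split in \eqref{cableb} is not correct. You attribute it to whether $\H_1(\L)$ is $\Z$ or $\Z+1/2$, but that depends only on the parity of $l$, not on whether $\L$ is type (A) or type (B): a type (B) link with $l$ odd has $\H_1(\L)=\Z+1/2$ and nevertheless falls into the first line of \eqref{cableb}. The actual source of the dichotomy is different. For type (A) links the stabilization is detected by an interior jump of the $H$-function in the column $s_1=b_1$ ($l$ even) or $s_1=b_1-1/2$ ($l$ odd), and the parity of $l$ enters through which column carries that jump before applying $\lceil T(s_1+1)-1\rceil$. For type (B) links $b_1(\L)$ equals the minimal value allowed by Lemma \ref{boundb} (cf.\ Proposition \ref{char of typeA}), the stabilization is governed instead by the genus of the cabled component, and the identity $g(L_{(p,q)})=T(g(L_1))$ yields the first line of \eqref{cableb} regardless of the parity of $l$. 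Your treatment of $b_2(\L_{p,q})=b_2(\L)$ is likewise only a sketch, though that part of the statement is less delicate.
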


\begin{proof}
By Lemma \ref{boundb}, $b_{1}(\L)\geq g(L_{1})-1+l/2$. We first suppose that $b_{1}(\L)\geq g(L_{1})+l/2$. By a similar argument to the one in Corollary \ref{positive surgery 2}, there exists a lattice point $(s_{1}, s_{2})\in \H(\L)$ such that 
$$H(s_{1}, s_{2})=a+1, H(s_{1}+1, s_{2})=H(s_{1}+1, s_{2}+1)=H(s_{1}+1, s_{2})=a$$
where $a\geq 0$, and $s_{1}=b_{1}$ or $b_{1}-1/2$ depending on the parity of the linking number. Then $\chi(HFL^{-}(s_{1}+1, s_{2}+1))=-1$, and it is the coefficient of the term $t_{1}^{s_{1}+1}t_{2}^{s_{2}+1}$ in $\tilde{\Delta}_{\L}(t_{1}, t_{2})$. By the definition of $b_{1}$, it is also not hard to see that the coefficients of $t_{1}^{y_{1}}t_{2}^{y_{2}}$ are $0$ in $\tilde{\Delta}_{\L}(t_{1}, t_{2})$ for all  $(y_{1}, y_{2})\succ (s_{1}+1,  s_{2}+1)$. 

Recall that the Alexander polynomial of the cable link $\L_{p, q}$ is computed by Turaev in \cite[Theorem 1.3.1]{Tu}, 
\begin{equation}
\Delta_{\L_{p, q}}(t_{1}, t_{2})=\Delta_{\L}(t_{1}^{p}, t_{2})\dfrac{t_{1}^{pq/2}-t_{1}^{-pq/2}}{t_{1}^{q/2}-t_{1}^{-q/2}}.
\end{equation}
Then 
\begin{equation}
\label{cableal2}
\tilde{\Delta}_{\L_{p, q}}(t_{1}, t_{2})=t_{1}^{1/2-p/2}\tilde{\Delta}_{\L}(t_{1}^{p}, t_{2})\dfrac{t_{1}^{pq/2}-t_{1}^{-pq/2}}{t_{1}^{q/2}-t_{1}^{-q/2}}. 
\end{equation}
Here $\dfrac{t_{1}^{pq/2}-t_{1}^{-pq/2}}{t_{1}^{q/2}-t_{1}^{-q/2}}$ is a Laurent polynomial of degree $pq/2-q/2$. 

Observe that $T(s)=ps+(1/2-p/2)+(pq-q)/2$. We claim that the coefficients of $t_{1}^{y_{1}}t_{2}^{y_{2}}$ in $\tilde{\Delta}_{\L}(t_{1}, t_{2})$ are $0$ for all $\bm{y}\succ \bm{s}$ if and only if for all $\bm{y}'\succ (T(s_{1}), s_{2})$, the coefficients of $t_{1}^{y'_{1}}t_{2}^{y'_{2}}$ are $0$ in $\tilde{\Delta}_{\L_{p, q}}(t_{1}, t_{2})$. The proof can be found in \cite[Theorem 4.7]{Liu2}, and  we don't  repeat the argument here. By the claim, the coefficients of the terms $t_{1}^{y'_{1}}t_{2}^{y'_{2}}$ in $\tilde{\Delta}_{\L_{p, q}}(t_{1}, t_{2})$ are  $0$ for all $y'_{1}>T(s_{1}+1)$ and the coefficient of the term $t_{1}^{T(s_{1}+1)}t_{2}^{s_{2}+1}$ is $-1$. So $b_{1}(\L_{p, q})=\lceil T(s_{1}+1)-1\rceil$. Replacing $s_{1}$ by $b_{1}$ or $b_{1}-1/2$, we prove \eqref{cableb}.

Now we assume that $b_{1}(\L)=g(L_{1})-1+l/2$. We claim that $b_{1}(\L_{p, q})=g(L_{p, q})-1+lp/2$. Note that  the linking number of $\L_{p, q}$ is $pl$. Recall that  the Alexander polynomial of the cable knot $L_{p, q}$ is computed by Turaev in \cite[Theorem 1.3.1]{Tu}, 
\begin{equation}
\label{cabelalex}
\Delta_{L_{p, q}}(t)=\dfrac{\Delta_{L_{1}}(t^{p})(t^{1/2}-t^{-1/2})}{t^{p/2}-t^{-p/2}}\cdot \dfrac{t^{pq/2}-t^{-pq/2}}{t^{p/2}-t^{-q/2}}.
\end{equation}
Here we are multiplying $\Delta_{L_{1}}(t^{p})$ by a Laurent polynomial of degree $T(0)$. 

It is not hard to see that for any L--space knot $K$, $g(K)$ is the top degree of the symmetrized Alexander polynomial of $K$ from the proof of Lemma \ref{genus0}. Then the monomial with the highest degree term in $\Delta_{L_{1}}(t)$ is $t^{g(L_{1})}$. By \eqref{cabelalex}, the highest degree term in $\Delta_{L_{p, q}}(t)$ is $t^{g(L_{1})p+(p-1)(q-1)/2}=T(g(L_{1}))$. 
Since $b_{1}(\L)=g(L_{1})-1+l/2$, the coefficients of the terms $t_{1}^{s_{1}}t_{2}^{s_{2}}$ in $\tilde{\Delta}_{\L}(t_{1}, t_{2})$ are $0$ for all $s_{1}>g(L_{1})+l/2$. By \eqref{cableal2}, the coefficients of terms $t_{1}^{y'_{1}}t_{2}^{y'_{2}}$ in $\tilde{\Delta}_{\L_{p, q}}(t_{1}, t_{2})$ are  $0$ for all $y'_{1}>T(g(L_{1})+l/2)=T(g(L_{1}))+pl/2$. This indicates that $b_{1}(\L_{p, q})=T(g(L_{1}))+lp/2-1$. Note that $b_{1}(\L)=g(L_{1})-1+l/2$. It is easy to see that $b_{1}(\L_{p, q})=T(b_{1}(\L)+1)-1$.

\end{proof}

\begin{lemma}
\label{mama}
Let $\L$ be a 2-component L--space link. The maximal lattice points $(s_{1}, s_{2})\in \H(\L)$ are one-to-one correspondence to the maximal lattice points $(T(s_{1}+1)-1, s_{2})\in \H(\L_{p, q})$ of the cable link $\L_{p, q}$. 

\end{lemma}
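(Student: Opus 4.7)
The plan is to translate the maximal lattice point condition into a statement about Alexander polynomial coefficients and then propagate this statement through Turaev's cabling formula \eqref{cableal2}, mirroring the technique already used in the proof of Lemma \ref{tranb}.

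First, I would observe that if $(s_1,s_2)\in\H(\L)$ is a maximal lattice point, then from $H_\L(s_1,s_2)=1$ and $H_\L(s_1+1,s_2)=H_\L(s_1,s_2+1)=0$, combined with Lemma \ref{growth control} forcing $H_\L(s_1+1,s_2+1)=0$, equation \eqref{computeh} yields $\chi(HFL^-(\L,(s_1+1,s_2+1)))=-1$. By \eqref{Alex} this means the coefficient of $t_1^{s_1+1}t_2^{s_2+1}$ in $\tilde{\Delta}_\L(t_1,t_2)$ equals $-1$. Combined with the growth control, maximality is equivalent to this nonvanishing coefficient together with the vanishing of all coefficients of $t_1^{y_1}t_2^{s_2+1}$ for $y_1>s_1+1$ in a neighborhood, i.e.\ the ``northeast corner'' pattern used in the proof of Lemma \ref{tranb}.

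Second, I would invoke the cabling substitution
\[
\tilde{\Delta}_{\L_{p,q}}(t_1,t_2)=t_1^{1/2-p/2}\tilde{\Delta}_\L(t_1^p,t_2)\cdot\frac{t_1^{pq/2}-t_1^{-pq/2}}{t_1^{q/2}-t_1^{-q/2}},
\]
noting that the rightmost factor is the Laurent polynomial $\sum_{i=0}^{p-1}t_1^{q(2i-p+1)/2}$. For sufficiently large $q/p$, the gaps between consecutive exponents in this factor exceed the $t_1$-diameter of $\tilde{\Delta}_\L(t_1^p,t_2)$, so products of monomials coming from the two factors do not interfere or cancel. Applying the correspondence from the proof of Lemma \ref{tranb} (following \cite[Theorem 4.7]{Liu2}), the coefficient of $t_1^{s_1+1}t_2^{s_2+1}$ in $\tilde{\Delta}_\L$ corresponds bijectively to the coefficient of $t_1^{T(s_1+1)}t_2^{s_2+1}$ in $\tilde{\Delta}_{\L_{p,q}}$, and the local vanishing pattern around $(s_1+1,s_2+1)$ transports to a corresponding local vanishing pattern around $(T(s_1+1),s_2+1)$.

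Finally, to convert back from Alexander polynomial coefficients to $H$-function values, I would use equation \eqref{computeh} for $\L_{p,q}$ together with the boundary identity $H_{\L_{p,q}}(s_1,\infty)=H_{L_{(p,q)}}(s_1-lp/2)$ and the known cable formula for $H_{L_{(p,q)}}$ in terms of $H_{L_1}$; this identifies $H_{\L_{p,q}}(T(s_1+1)-1,s_2)=1$ and $H_{\L_{p,q}}(T(s_1+1),s_2)=H_{\L_{p,q}}(T(s_1+1)-1,s_2+1)=0$, confirming maximality of $(T(s_1+1)-1,s_2)$, and the correspondence is bijective because every nonzero coefficient of $\tilde{\Delta}_{\L_{p,q}}$ arises uniquely from a corresponding nonzero coefficient of $\tilde{\Delta}_\L$ via the ``non-overlap'' property established above. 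The main obstacle is precisely this last conversion: the Alexander polynomial argument gives information about coefficients (i.e.\ alternating sums of $H$-values), and disentangling these back into individual $H$-values requires careful control of the boundary behavior at $\infty$, which is where the large-$q/p$ hypothesis and the explicit cable $H$-function formulas become essential.
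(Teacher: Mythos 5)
Your overall strategy---recasting maximality as a statement about the coefficients of $\tilde{\Delta}_{\L}$ via \eqref{computeh} and \eqref{Alex}, pushing it through Turaev's formula \eqref{cableal2} using non-cancellation for $q/p$ large, and converting back---is exactly the route the paper takes. But there is a genuine gap at the step you yourself flag as ``the main obstacle,'' and it is not a mere technicality. The coefficient pattern ($\chi(HFL^{-}(s_{1}+1,s_{2}+1))=-1$ and $\chi(HFL^{-}(y_{1},y_{2}))=0$ for all $(y_{1},y_{2})\succ(s_{1}+1,s_{2}+1)$) is \emph{not} equivalent to $(s_{1},s_{2})$ being maximal: the vanishing of the mixed second differences only forces $H(y_{1},y_{2})=H(y_{1},\infty)+H(\infty,y_{2})$ on the quadrant $\{(y_{1},y_{2})\succeq(s_{1}+1,s_{2}+1)\}$, so one also needs the boundary conditions $H(s_{1}+1,\infty)=H(\infty,s_{2}+1)=0$, i.e.\ $s_{i}\geq g_{i}+l/2$ by Lemma \ref{genus0} and Lemma \ref{infinity}. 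Without these, the identical coefficient pattern occurs at points where $H(s_{1},s_{2})>1$. The paper's characterization of maximal lattice points includes these genus inequalities explicitly, and your argument needs them too; your stated equivalence (nonzero corner coefficient plus vanishing ``in a neighborhood,'' and only along the row $t_{2}^{s_{2}+1}$ at that) is false as written.

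This matters most in the converse direction, which your proposal asserts rather than proves. Bijectivity of nonzero coefficients under \eqref{cableal2} gives you, for each maximal $(s'_{1},s'_{2})\in\H(\L_{p,q})$, a point $(s_{1},s_{2})\in\H(\L)$ with $T(s_{1}+1)-1=s'_{1}$ and the right $\chi$-pattern; but to conclude that $(s_{1},s_{2})$ is maximal for $\L$ you must still verify $s_{1}\geq g_{1}+l/2$. The inequality $s'_{1}\geq g(L_{(p,q)})+pl/2$ coming from maximality in the cable translates, a priori, only into $s_{1}\geq g_{1}+l/2-1$; the paper then rules out the borderline case $s_{1}=g_{1}-1+l/2$ by a short case analysis on the possible $H$-values near $(s_{1},s_{2})$, showing that it forces either $\chi(HFL^{-}(s_{1}+1,s_{2}+1))=0$ or the existence of a larger maximal point, both contradictions. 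Your proof needs this step (or the integrality argument upgrading $s_{1}>g_{1}+l/2-1$ to $s_{1}\geq g_{1}+l/2$); as written, the sentence ``the correspondence is bijective because every nonzero coefficient of $\tilde{\Delta}_{\L_{p,q}}$ arises uniquely from a nonzero coefficient of $\tilde{\Delta}_{\L}$'' conflates a bijection of coefficients with a bijection of maximal lattice points and does not close the loop.
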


\begin{proof}
The maximal lattice point $(s_{1}, s_{2})$ has the property that $H(s_{1}, s_{2})=1$ and $H(s_{1}+1, s_{2})=H(s_{1}, s_{2}+1)=H(s_{1}+1, s_{2}+1)=0$. It is not hard to check that a lattice point $(s_{1}, s_{2})\in \H(\L)$ is maximal if and only if $s_{i}\geq g_{i}+l/2$ for $i=1, 2$, $\chi(HFL^{-}(s_{1}+1, s_{2}+1))=-1$ and $\chi(HFL^{-}(y_{1}, y_{2}))=0$ for all $(y_{1}, y_{2})\succ (s_{1}+1, s_{2}+1)$.

We first prove that if $(s_{1}, s_{2})\in \H(\L)$ is a maximal lattice point, then $(T(s_{1}+1)-1, s_{2})\in \H(\L_{cab})$ is a maximal lattice point. 
In the proof of Lemma \ref{tranb}, we see that $g(L_{p, q})=T(g(L_{1}))$, and $\chi(HFL^{-}(y'_{1}, y'_{2}))=0$ for all $(y'_{1}, y'_{2})\succ (T(s_{1}+1), s_{2}+1)$. Note that $T(s_{1}+1)-1=ps_{1}+p+(p-1)(q-1)/2-1$. Since $s_{1}\geq g(L_{1})+l/2$ and $p>1$, $T(s_{1}+1)-1\geq T(g(L_{1}))+lp/2$. By \eqref{cableal2}, $\chi(HFL^{-}(T(s_{1}+1), s_{2}+1))=-1$. Hence $(T(s_{1}+1)-1, s_{2})\in \H(\L_{p, q})$ is a maximal lattice point if $(s_{1}, s_{2})\in \H(\L)$ is a maximal lattice point. The converse also holds. If $(s'_{1}, s'_{2})\in \H(\L_{p, q})$ is a maximal lattice point, by \eqref{cableal2}, there exists a lattice point $(s_{1}, s_{2})\in \H(\L)$ such that $T(s_{1}+1)-1=s'_{1}$. By a similar argument, $\chi(HFL^{-}(s_{1}+1, s_{2}+1))=-1$ and $\chi(HFL^{-}(y_{1}, y_{2}))=0$ for all $(y_{1}, y_{2})\succ (s_{1}+1, s_{2}+1)$. It suffices to prove that $s_{1}\geq g(L_{1})+l/2$. Since $T(s_{1}+1)-1\geq T(g(L_{1}))+lp/2$, we have $s_{1}\geq g(L_{1})+l/2-1$. If $s_{1}=g(L_{1})-1+l/2$, either $$H(s_{1}, s_{2})=H(s_{1}+1, s_{2})=1, H(s_{1}+1, s_{2})=H(s_{1}+1, s_{2}+1)=0,$$
or there exists a maximal lattice point $(y_{1}, y_{2})\in \H(\L)$ such that $(y_{1}, y_{2})\succ (s_{1}, s_{2})$. In the former case, $\chi(HFL^{-}(s_{1}+1, s_{2}+1))=0$, which contradicts to the property $\chi(HFL^{-}(s_{1}+1, s_{2}+1))=-1$. The latter case contradicts to the property that  $\chi(HFL^{-}(y_{1}, y_{2}))=0$ for all $(y_{1}, y_{2})\succ (s_{1}+1, s_{2}+1)$. Hence, $(s_{1}, s_{2})\in \H(\L)$ is a maximal lattice point of $\L$.

\end{proof}

\begin{lemma}
\label{cablech}
Suppose that $\L=L_{1}\cup L_{2}$ is an L--space link with a maximal lattice point $\bm{s}=(s_{1}, s_{2})\in \H(\L)$ such that the coefficient of  $t_{1}^{-s_{1}-1/2}t_{2}^{ s_{2}+1/2}$   in  $\Delta_{\L}(t_{1}, t_{2})$ is nonzero. Then the coefficient of $t_{1}^{-T(s_{1}+1)+1/2, s_{2}+1/2}$ in $\Delta_{\L_{p, q}}(t_{1}, t_{2})$ is also nonzero corresponding the maximal lattice point $(T(s_{1}+1)-1, s_{2})\in \H(\L_{p, q})$.

\end{lemma}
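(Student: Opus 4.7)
The strategy is to compute the targeted coefficient of $\Delta_{\L_{p,q}}$ directly from Turaev's cable formula \eqref{cableal2} and to use the coprimality of $p$ and $q$ to force exactly one monomial of $\tilde\Delta_\L$ to contribute. The argument is essentially a single Diophantine computation once one sets up the bookkeeping correctly.

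First I would pass from $\Delta$ to the symmetrized polynomial $\tilde\Delta = (t_1 t_2)^{1/2}\Delta$ in order to clear the half--integer shifts. The hypothesis becomes: the coefficient of $t_1^{-s_1} t_2^{s_2+1}$ in $\tilde\Delta_\L$ is nonzero. The conclusion to prove becomes: the coefficient of $t_1^{1-T(s_1+1)} t_2^{s_2+1}$ in $\tilde\Delta_{\L_{p,q}}$ is nonzero. This reformulation eliminates all the fractional exponents up front and is valid because the linking number of $\L_{p,q}$ is $pl$, which shifts $\H(\L_{p,q})$ compatibly.

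Next, substitute into \eqref{cableal2}. Expanding the ratio $(t_1^{pq/2}-t_1^{-pq/2})/(t_1^{q/2}-t_1^{-q/2})$ as the symmetric geometric sum $\sum_{j=0}^{p-1} t_1^{(p-1-2j)q/2}$ and absorbing the prefactor $t_1^{(1-p)/2}$, a monomial $c_{a,b}\,t_1^a t_2^b$ of $\tilde\Delta_\L$ contributes precisely the monomials $c_{a,b}\,t_1^{T(a)-jq}t_2^b$ for $j=0,\dots,p-1$ to $\tilde\Delta_{\L_{p,q}}$, using the identity $(1-p)/2 + (p-1)q/2 = (p-1)(q-1)/2$. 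Consequently, the sought coefficient equals $\sum c_{a,s_2+1}$, the sum ranging over pairs $(a,j)$ with $0\le j\le p-1$ satisfying the linear relation $T(a)-jq = 1 - T(s_1+1)$.

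The key step — and the only place where anything interesting happens — is solving this Diophantine equation. A short rearrangement reduces it to $p(a + s_1 + q) = q(j+1)$. Since $\gcd(p,q)=1$, one must have $p \mid (j+1)$, and since $j+1$ ranges over $\{1,\dots,p\}$, this forces $j = p-1$ and therefore $a = -s_1$. Thus the entire sum collapses to the single term $c_{-s_1,s_2+1}$, which is nonzero by hypothesis. Finally, Lemma \ref{mama} already identifies $(T(s_1+1)-1,s_2)$ as the maximal lattice point of $\L_{p,q}$ paired with $(s_1,s_2)$, so the nonvanishing just established is exactly the one asserted. I do not foresee any genuine obstacle; the only subtlety is arithmetic with half--integer exponents, and this is resolved cleanly by passing to $\tilde\Delta$ at the outset.
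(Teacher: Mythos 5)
Your proposal is correct and follows essentially the same route as the paper: both pass to $\tilde{\Delta}$, expand the Turaev cable factor as the geometric sum $\sum_{j=0}^{p-1} t_1^{(p-1-2j)q/2}$, reduce the question to a linear Diophantine equation in the exponent $a$ and the index $j$, and use $\gcd(p,q)=1$ to force $j=p-1$ and $a=-s_1$, so that the single surviving contribution is the hypothesized nonzero coefficient. Your rearrangement $p(a+s_1+q)=q(j+1)$ is just a cleaner form of the paper's equation $(x+s_1)p=-q(p-l-1)$, and the appeal to Lemma \ref{mama} for the maximality of $(T(s_1+1)-1,s_2)$ matches the paper exactly.
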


\begin{proof}
By Lemma \ref{mama}, $(T(s_{1}+1)-1, s_{2})\in \H(\L_{p, q})$ is a maximal lattice point of $\L_{p, q}$. It suffices to prove that the coefficient of  $t^{-T(s_{1}+1)+1, s_{2}+1}$ is nonzero in $\tilde{\Delta}_{\L_{p, q}}(t_{1}, t_{2})$. Note that \eqref{cableal2} can be written as:
$$\tilde{\Delta}_{\L_{p, q}}(t_{1}, t_{2})=t_{1}^{1/2-p/2}\tilde{\Delta}_{\L}(t_{1}^{p}, t_{2}) (t_{1}^{q(p-1)/2}+t_{1}^{q(p-3)/2}+\cdots +t_{1}^{-q(p-1)/2}).$$
Then 
$$-T(s_{1}+1)+1=-ps_{1}-p-(p-1)(q-1)/2+1=1/2-p/2-ps_{1}-q(p-1)/2.$$
So we need to check that the term is not cancelled in $\tilde{\Delta}_{\L_{p, q}}(t_{1}, t_{2})$. Assume that there exists a term $t_{1}^{x}t_{2}^{y}$ in $\tilde{\Delta}_{\L}(t_{1}, t_{2})$ such that 
$$1/2-p/2+px+q(p-k)/2=1/2-p/2-ps_{1}-q(p-1)/2$$
for some $k\in \{ 1, 3, 5, \cdots, 2p-1\}$. Simplifying this equation, we get 
\begin{equation}
\label{cance}
(x+s_{1})p=-q(p-k)/2-q(p-1)/2=-q(p-l-1)
\end{equation}
where $k=2l+1$ and $l\in \{0, 1, \cdots, p-1\}$. Note  that $p$ and $q$ are coprime, and $0\leq p-l-1\leq p-1$. So the only solution to \eqref{cance} is $x=-s_{1}$. Hence, the term $t_{1}^{-T(s_{1}+1)+1}t_{2}^{s_{2}+1}$ has nonzero coefficient in $\tilde{\Delta}_{\L_{p, q}}(t_{1}, t_{2})$. 

\end{proof}

\begin{theorem}
\label{if and only 3}
Let $\L=L_{1}\cup L_{2}$ be an L--space link with $b_{1}=s_{1}$ and $b_{2}=s'_{2}$ for some maximal lattice points  $(s_{1}, s_{2})$ and $(s'_{1}, s'_{2})$.  Suppose that the coefficients of $t_{1}^{-s_{1}-1/2}t_{2}^{s_{2}+1/2}$ and $t_{1}^{s'_{1}+1/2}t_{2}^{-s'_{2}-1/2}$ in the symmetrized Alexander polynomial $\Delta_{\L}(t_{1}, t_{2})$ are nonzero. Then $S^{3}_{d_{1}, d_{2}}(\L_{p, q})$ is an L--space if and only if $d_{1}>2b_{1}(\L_{p, q}), d_{2}>2b_{2}(\L_{p, q})$ for all cable link $\L_{p, q}$.
\end{theorem}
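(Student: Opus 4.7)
The plan is to reduce to Corollary \ref{if and only if2} applied to the cable link $\L_{p,q}$ itself. Since $\L$ is an L--space link and $q/p$ is taken sufficiently large, $\L_{p,q}$ is again an L--space link by \cite[Proposition 2.8]{BG}. Therefore it suffices to verify that $\L_{p,q}$ satisfies the two structural hypotheses of Corollary \ref{if and only if2}: namely, that $b_{1}(\L_{p,q})$ and $b_{2}(\L_{p,q})$ are realized by maximal lattice points, and that the corresponding two coefficients of the symmetrized Alexander polynomial $\Delta_{\L_{p,q}}(t_{1}, t_{2})$ are nonzero.

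First I would identify the maximal lattice points of $\L_{p,q}$ using Lemma \ref{mama}: the maximal lattice point $(s_{1}, s_{2})\in \H(\L)$ corresponds to the maximal lattice point $(T(s_{1}+1)-1, s_{2})\in \H(\L_{p,q})$, and similarly $(s'_{1}, s'_{2})$ corresponds to $(T(s'_{1}+1)-1, s'_{2})$. Next, by Lemma \ref{tranb}, the hypothesis $b_{1}(\L)=s_{1}$ yields $b_{1}(\L_{p,q})=T(s_{1}+1)-1$, which is exactly the first coordinate of the first corresponding maximal lattice point. Since cabling is performed only on $L_{1}$, the same lemma gives $b_{2}(\L_{p,q})=b_{2}(\L)=s'_{2}$, matching the second coordinate of the second corresponding maximal lattice point. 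Hence the first hypothesis of Corollary \ref{if and only if2} is satisfied for $\L_{p,q}$.

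For the Alexander polynomial hypothesis, Lemma \ref{cablech} applied to the maximal lattice point $(s_{1}, s_{2})$ shows that nonvanishing of the coefficient of $t_{1}^{-s_{1}-1/2}t_{2}^{s_{2}+1/2}$ in $\Delta_{\L}$ implies nonvanishing of the coefficient of $t_{1}^{-T(s_{1}+1)+1/2}t_{2}^{s_{2}+1/2}$ in $\Delta_{\L_{p,q}}$, which is the required coefficient attached to the first maximal lattice point of $\L_{p,q}$. I would then invoke the mirror-symmetric version of Lemma \ref{cablech} for the maximal point $(s'_{1}, s'_{2})$: inspecting Turaev's cabling formula \eqref{cableal2}, the coefficient of $t_{1}^{s'_{1}+1/2}t_{2}^{-s'_{2}-1/2}$ in $\Delta_{\L}$ produces, by the same coprimality argument as in Lemma \ref{cablech}, an uncancelled contribution to the coefficient of $t_{1}^{T(s'_{1}+1)-1/2}t_{2}^{-s'_{2}-1/2}$ in $\Delta_{\L_{p,q}}$. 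This is exactly the coefficient attached to the second maximal lattice point of $\L_{p,q}$.

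With both hypotheses of Corollary \ref{if and only if2} verified for $\L_{p,q}$, the corollary immediately yields that $S^{3}_{d_{1}, d_{2}}(\L_{p,q})$ is an L--space if and only if $d_{1}>2b_{1}(\L_{p,q})$ and $d_{2}>2b_{2}(\L_{p,q})$. The only real piece of work is the symmetric version of Lemma \ref{cablech}, but the coprimality computation \eqref{cance} goes through verbatim after swapping roles, so no genuine obstacle arises; the proof is essentially a bookkeeping composition of the preceding lemmas.
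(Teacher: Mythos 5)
Your proposal is correct and follows exactly the route the paper takes: the paper's own proof is the one-line observation that the theorem is ``straightforward from Lemma \ref{cablech} and Corollary \ref{if and only if2},'' and your argument simply spells out the bookkeeping (via Lemma \ref{mama} and Lemma \ref{tranb}) needed to verify the hypotheses of Corollary \ref{if and only if2} for $\L_{p,q}$. Your explicit note that the second Alexander coefficient requires a mirror version of Lemma \ref{cablech}, with the same coprimality computation as in \eqref{cance}, is a detail the paper leaves implicit, and your verification of it is sound.
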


\begin{proof}
This is straightforward from  Lemma \ref{cablech} and Corollary \ref{if and only if2}. 
\end{proof}

\indent

{\bf Proof of Corollary \ref{whitehead}: } The Whitehead link $Wh$ is an L--space link with vanishing linking number. Its Alexander polynomial is as follows:
$$\Delta(t_{1}, t_{2})=-(t_{1}^{1/2}-t_{1}^{-1/2})(t_{2}^{1/2}-t_{2}^{-1/2}).$$
By \eqref{Alex} and \eqref{computeh}, the $H$-function has the following values:

\begin{center}
\begin{tikzpicture}
\draw (1,0)--(1,5);
\draw (2,0)--(2,5);
\draw (3,0)--(3,5);
\draw (4,0)--(4,5);
\draw (0,1)--(5,1);
\draw (0,2)--(5,2);
\draw (0,3)--(5,3);
\draw (0,4)--(5,4);
\draw (0.5,4.5) node {2};
\draw (1.5,4.5) node {1};
\draw (2.5,4.5) node {0};
\draw (3.5,4.5) node {0};
\draw (4.5,4.5) node {0};
\draw (0.5,3.5) node {2};
\draw (1.5,3.5) node {1};
\draw (2.5,3.5) node {0};
\draw (3.5,3.5) node {0};
\draw (4.5,3.5) node {0};
\draw (0.5,2.5) node {2};
\draw (1.5,2.5) node {1};
\draw (2.5,2.5) node {1};
\draw (3.5,2.5) node {0};
\draw (4.5,2.5) node {0};
\draw (0.5,1.5) node {3};
\draw (1.5,1.5) node {2};
\draw (2.5,1.5) node {1};
\draw (3.5,1.5) node {1};
\draw (4.5,1.5) node {1};
\draw (0.5,0.5) node {4};
\draw (1.5,0.5) node {3};
\draw (2.5,0.5) node {2};
\draw (3.5,0.5) node {2};
\draw (4.5,0.5) node {2};
\draw [->,dotted] (0,2.5)--(5,2.5);
\draw [->,dotted] (2.5,0)--(2.5,5);
\draw (5,2.7) node {$s_1$};
\draw (2.3,5) node {$s_2$};
\end{tikzpicture}
\end{center}

So there is a maximal lattice point $(0, 0)\in \H(Wh)$ and $b_{1}(Wh)=b_{2}(Wh)=0$. It is easy to check that the Whitehead link satisfies the assumptions in Theorem \ref{if and only 3}. By Theorem \ref{if and only 3}, $S^{3}_{d_{1}, d_{2}}(Wh_{cab})$ is an L--space if and only if $d_{1}>2b_{1}(Wh_{cab})$ and $d_{2}>2b_{2}(Wh_{cab})$.  By \eqref{cableb}, $b_{i}(Wh_{cab})=p_{i}+(p_{i}-1)(q_{i}-1)/2-1$ for $i=1, 2$. Hence  $S^{3}_{d_{1}, d_{2}}(Wh_{cab})$ is an L--space if and only if $d_{i}>p_{i}q_{i}+p_{i}-q_{i}-1$ for $i=1,2$.

\end{document}